\theoremstyle{plain}
\newtheorem{theorem}{Theorem}[section]
\newtheorem{corollary}[theorem]{Corollary}
\newtheorem{lemma}[theorem]{Lemma}
\newtheorem{proposition}[theorem]{Proposition}
\theoremstyle{remark}
\newtheorem{remark}{Remark}
\newcommand{\lm}[1]{{\color{red}#1}}
\newcommand{\Norm}[1]{{\left\|{#1} \right\|}}
\newcommand{\SemiNorm}[1]{{\left|{#1} \right|}}
\newcommand{\jump}[1]{\left[\!\left[#1\right]\!\right]}
\newcommand{\p}{p}
\newcommand{\h}{h}
\newcommand{\Pcalpbftt}{\mathcal P_{\pbft}}
\newcommand{\Pcaltildepbftt}{\widetilde{\mathcal P}_{\pbft}}
\newcommand{\Pcaltildepbftmo}{\widetilde{\mathcal P}_{\pbft-1}}
\newcommand{\Nbb}{\mathbb N}
\newcommand{\Pbb}{\mathbb P}
\newcommand{\Rbb}{\mathbb R}
\newcommand{\xbf}{\mathbf x}
\newcommand{\qpnt}{q_{\pnt}}
\newcommand{\qpkt}{q_{\pkt}}
\newcommand{\qpx}{q_{\px}}
\newcommand{\qpntmo}{q_{\pnt-1}}
\newcommand{\qpntmth}{q_{\pnt-3}}
\newcommand{\Ihat}{\widehat I}
\newcommand{\Pizpbft}{\Pi^0_{\pbft}}
\newcommand{\utildeh}{\widetilde U_{\h}}
\newcommand{\Deltax}{\Delta_{\xbf}}
\newcommand{\nablax}{\nabla_{\xbf}}
\newcommand{\QT}{Q_T}
\newcommand{\uzero}{u_0}
\newcommand{\uone}{u_1}
\newcommand{\uzeroh}{u_{0,\h}}
\newcommand{\uoneh}{u_{1,\h}}
\newcommand{\Tcalh}{\mathcal T_\h}
\newcommand{\taun}{\tau_n}
\newcommand{\taum}{\tau_m}
\newcommand{\tauk}{\tau_k}
\newcommand{\Xh}{X_{\h,\boldsymbol{\tau}}}
\newcommand{\uh}{U_{\h}}
\newcommand{\uhpr}{U_{\h}'}
\newcommand{\vh}{v_\h}
\newcommand{\Wh}{W_{\h}}
\newcommand{\uhplus}{U_{\h}^+}
\newcommand{\Whplus}{W_{\h}^+}
\newcommand{\uhminus}{U_{\h}^-}
\newcommand{\Vh}{V_{\h}}
\newcommand{\tnmo}{t_{n-1}}
\newcommand{\tmmo}{t_{m-1}}
\newcommand{\tmmotilde}{t_{\mtilde-1}}
\newcommand{\tn}{t_n}
\newcommand{\tm}{t_m}
\newcommand{\tnminus}{t_n^-}
\newcommand{\taubold}{\boldsymbol \tau}
\newcommand{\Pizpbftmo}{\Pi^0_{\pbft-1}}
\newcommand{\Pizpbftmth}{\Pi^0_{\pbft-3}}
\newcommand{\In}{I_n}
\newcommand{\Inmo}{I_{n-1}}
\newcommand{\lambdan}{\lambda_n}
\newcommand{\lambdam}{\lambda_m}
\newcommand{\mun}{\mu_n}
\newcommand{\mum}{\mu_m}
\newcommand{\eh}{e_{\h}}
\newcommand{\xih}{\xi_{\h}}
\newcommand{\PihEcal}{\Pi_\h^{\mathcal E}}
\newcommand{\fxih}{f_{\xih}}
\newcommand{\Pionepbftmo}{\Pi^{1}_{\pbft-1}}
\newcommand{\Pionepbft}{\Pi^{1}_{\pbft}}
\newcommand{\tauo}{\tau_1}
\newcommand{\px}{\p_{\xbf}}
\newcommand{\pbft}{\mathbf \p^t}
\newcommand{\pnt}{\p_n^t}
\newcommand{\pmt}{\p_m^t}
\newcommand{\pkt}{\p_k^t}
\newcommand{\pmtildet}{\p_{\widetilde m}^t}
\newcommand{\pot}{\p_1^t}
\newcommand{\U}{U}
\newcommand{\V}{V}
\newcommand{\Vpr}{V'}
\newcommand{\vB}{v_B}
\newcommand{\Uhat}{\widehat U}
\newcommand{\Vhat}{\widehat V}
\newcommand{\Vplus}{V^+}
\newcommand{\Xtau}{X_{\boldsymbol\tau}}
\newcommand{\HozOmega}{H^1_0(\Omega)}
\newcommand{\cX}{c_X}
\newcommand{\ctildeX}{\widetilde c_X}
\renewcommand{\Im}{I_m}
\newcommand{\Ik}{I_k}
\newcommand{\Imtilde}{I_{\mtilde}}
\newcommand{\mtilde}{\widetilde m}
\newcommand{\etao}{\eta_1}
\newcommand{\etatw}{\eta_2}
\newcommand{\oscf}{{\rm osc}(f)}
\newcommand{\oscnf}{{\rm osc}_n(f)}
\DeclareMathOperator{\card}{card}
\title{\normalsize{A priori and a posteriori error estimates
of a $\mathcal C^0$-in-time method for the wave equation in second order formulation}}
\author{\normalsize{Z. Dong\thanks{Inria, 48 rue Barrault, 75013 Paris, France;
CERMICS, ENPC, Institut Polytechnique de Paris, 77455 Marne-la-Vallee Cedex 2, France
{\tt zhaonan.dong@inria.fr, zuodong.wang@inria.fr}},
L. Mascotto\thanks{Dipartimento di Matematica e Applicazioni, Universit\`a degli Studi di Milano-Bicocca, 20125 Milan, Italy;
IMATI-CNR, 27100, Pavia, Italy;
Fakult\"at f\"ur Mathematik, Universit\"at Wien, 1090 Vienna, Austria
{\tt lorenzo.mascotto@unimib.it}},
Z. Wang\footnotemark[1]}}
\date{}
\begin{document}

\maketitle

\begin{abstract}
\noindent We establish fully-discrete a priori
and semi-discrete in time a posteriori
error estimates for a discontinuous-continuous Galerkin
discretization of the wave equation in second order formulation;
the resulting method is a Petrov-Galerkin scheme based on
piecewise polynomial test functions
and continuous piecewise polynomial trial functions in time, respectively.
Crucial tools in the a priori analysis for the fully-discrete formulation
are the design of suitable projection and interpolation operators
extending those used in the parabolic setting,
and stability estimates based on a nonstandard choice of the test function;
a priori estimates are shown,
which are measured in $L^\infty$-type norms in time.
For the semi-discrete in time formulation,
we exhibit reliable a posteriori error estimates
for the error measured in the $L^\infty(L^2)$ norm
with fully explicit constants;
to this aim, we design a reconstruction operator
into $\mathcal C^1$ piecewise polynomials
over the time grid with optimal approximation properties
in terms of the polynomial degree
distribution and the time steps.
Numerical examples illustrate the theoretical findings.

\medskip\noindent
\textbf{AMS subject classification}: 65M50; 65M60;  65J10.

\medskip\noindent
\textbf{Keywords}: wave equation;
discontinuous Galerkin method; adaptive algorithm; $hp$-analysis;
a posteriori error analysis.
\end{abstract}

\section{Introduction} \label{section:introduction}

We establish fully-discrete a priori
and semi-discrete in time a posteriori error estimates
for a $\mathcal C^0$-in-time method, see, e.g., \cite{Walkington:2014},
approximating solutions to the wave equation in second order formulation,
which are explicit in the spatial mesh size, the time steps,
and the polynomial degrees.

\paragraph*{Formulation, a priori estimates, and minimal literature.}
The $\mathcal C^0$-in-time method we are interested in
is based on a second order formulation of the wave equation.
Compared to several references where first order systems are considered
\cite{French-Peterson:1996, Hulbert-Hughes:1990, French:1993, Johnson:1993},
the dimension of the corresponding discrete spaces is smaller
for fixed time steps.

The method lies in between a fully DG and a
$\mathcal C^1$ schemes in time:
it employs piecewise polynomial test functions
and continuous piecewise polynomial trial functions in time, respectively;
upwind terms involving first time derivatives
in time of the trial functions
are included in the formulation.
The polynomial degree in time of the trial functions
is larger by~1 than that for the test functions,
leading to square systems for each time interval,
which are solved sequentially
as a time marching scheme.

A key tool in deriving stability and a priori error estimates
is the choice of an appropriate test function.
For instance, in \cite{Walkington:2014}, a higher order
fully-discrete version of the test function
in~\cite{Baker:1976} is used, leading to stability estimates
in the $L^{\infty}$-type norms in time.
Nonlinear problems are also discretized with this approach~\cite{Gomez-Nikolic:2025}.

\paragraph*{A posteriori estimates and minimal literature.}
A posteriori error estimates are well established for elliptic problems
and a considerable amount of work is available for parabolic problems as well;
on the other hand, hyperbolic (and in particular wave)
problems are less explored.

A posteriori error estimates for wave problems
in second order formulation are studied in~\cite{Adjerid:2002}
and later rigorously analysed in~\cite{Bernardi-Suli:2005};
first order systems are instead the topic of
\cite{Makridakis-Nochetto:2006, Johnson:1993, Suli:1999};
a posteriori error estimates measured in the $L^\infty(L^2)$ norm
are investigated for different time stepping schemes
in \cite{Georgoulis-Lakkis-Makridakis:2013, Georgoulis-Lakkis-Makridakis-Virtanen:2016, Gorynina-Lozinski-Picasso:2019}.
A posteriori error estimates
that are reliable and efficient in the same norm
are instead investigated in
\cite{ChaumontFrelet-Ern:2025, ChaumontFrelet:2023}.

\paragraph*{Features of the $\mathcal C^0$-in-time method.}
Compared to fully DG schemes in time,
the $\mathcal C^0$-in-time method involves fewer unknowns;
in view of the a posteriori error analysis
for the semi-discrete in time version,
one can employ tools from the parabolic setting
and deduce a posteriori bounds that are fully explicit with
respect to the polynomial degree.
On the other hand, a modification of the scheme
seems to be suited for dynamic mesh changes in space~\cite{Karakashian-Makridakis:2005, Dong-Georgoulis-Mascotto-Wang:2025}.

\paragraph*{First main contribution of the manuscript.}
We modify the analysis in~\cite{Walkington:2014}
so as to have explicit dependence on the polynomial degree
for the a priori analysis of the fully-discrete scheme.
We consider uniform polynomial degree in the spatial discretization
and possibly nonuniform polynomial degree in the time discretization.
Static, locally quasi-uniform meshes in space are considered,
allowing for small elements
that in explicit schemes would impact
on the CFL condition~\cite{Grote-Michel-Sauter:2021}.

Our analysis hinges upon deriving stability estimates for the scheme,
which are explicit in the polynomial degrees;
see Section~\ref{subsection:stability-p-explicit}.
Based on such stability estimates
and the properties of an integrated Thom\'ee operator
discussed in Section~\ref{subsection:integrated-Thomee},
a priori error estimates
are proven in Section~\ref{subsection:abstract-analysis}.
Error estimates
are obtained in Section~\ref{subsection:h-p-convergence},
which are explicit in the spatial mesh size, the time steps,
and the polynomial degrees.
Optimal rates for the error
measured with respect to $L^\infty$-type norms in time
is shown for sufficiently regular solutions and data
for fixed polynomial degrees in time and space.

\paragraph*{Second main contribution of the manuscript.}
For the semi-discrete in time method,
we design an error estimator satisfying
a posteriori error estimates,
with explicit dependence on the polynomial degree distribution in time.
The crucial tool in a posteriori error estimates for time-dependent
problems is the derivation of a reconstruction operator
into smoother spaces.
The original idea in the context of parabolic problems
traces back to Makridakis and Nochetto \cite{Makridakis-Nochetto:2006};
the corresponding $\p$-version analysis is detailed
in~\cite{Schoetzau-Wihler:2010}
and later in~\cite{Holm-Wihler:2018}.
We design a related operator in the wave equation setting
and derive $\p$-optimal approximation estimates
in several norms in Section~\ref{subsection:reconstruction-operator};
such an operator is instrumental
for designing an error estimator that is reliable for the error measured
in the $L^{\infty}(L^2)$ norm;
see Section~\ref{subsection:error-estimator-estimates};
in the recent work~\cite{Dong-Georgoulis-Mascotto-Wang:2025},
a posteriori error estimates in that norm are derived,
which are valid  also for dynamic mesh changes.
The upper bound is explicit in the polynomial degree
distribution and the time steps,
without unknown constants.
Since the test and trial test functions have different
polynomial degrees in time,
the a posteriori error bounds involve extra oscillation terms
compared to the parabolic setting.

While most references focus on explicit time-stepping,
we focus on implicit approaches;
they are more appropriate for the analysis
of coupled wave and parabolic problems,
such as fluid-structure interactions,
which we plan to study in future work,
and allow for elements of different sizes
for different regimes without
influencing the local time-steps.
To the best of our knowledge,
we provide for the first time in the literature fully explicit,
reliable a posteriori error estimates for a semi-discrete in time method
for the approximation of solutions to the wave equation
in second order formulation, which are explicit
in the polynomial degree distribution in time
and optimal in the time steps.
The proposed analysis does not hinge upon any CFL condition,
which is greatly advantageous for adaptivity
whilst compared to methods based on explicit time stepping;
there, for each mesh refinement,
one has to check whether the resulting spatial mesh
size is sufficiently small compared to the corresponding
time step and the polynomial degree~\cite{Grote-Michel-Sauter:2021}.
This improvement is even more effective for wave problems
involving higher order spatial elliptic operators
\cite{Nataraj-RuizBaierYousuf:2025},
where the CFL condition poses even stricter constraints
on the spatial mesh size.

\paragraph*{List of the main results and advances.}
For the reader's convenience, we list here
the main results of the manuscript
([{\textbf{APRI}}] = a priori analysis;
[{\textbf{APOS}}] = a posteriori analysis):
\begin{itemize}
    \item{} [{\textbf{APRI}}]
    Theorem~\ref{theorem:stability} is concerned with
    fully explicit stability estimates
    for the $\mathcal C^0$-in-time formulation,
    which are explicit with respect to the polynomial degrees
    in time and space employed throughout;
    \item{} [{\textbf{APRI}}]
    Proposition~\ref{proposition:Pcaltildept-L2-approx}
    analyzes the approximation
    properties of a novel integrated Thom\'ee operator;
    \item{} [{\textbf{APRI}}]
    Theorem~\ref{theorem:h-p-convergence}
    discusses a priori estimates that are explicit with respect
    to the spatial mesh size, the time step distribution,
    the spatial polynomial degree,
    and the polynomial degree distribution in time;
    \item{} [{\textbf{APOS}}] we define an
    error estimator~$\eta$ in~\eqref{error-estimator}
    for the semi-discrete in time formulation;
    \item{} [{\textbf{APOS}}] corresponding reliability estimates with respect to the
    $L^\infty(L^2)$ norm of the error
    are displayed in
    Proposition~\ref{proposition:reliability-estimate}.
\end{itemize}

\paragraph*{Functional setting.}
Standard notation is used throughout for Sobolev and Bochner spaces.
Let~$D$ be a Lipschitz domain in~$\Rbb^d$, $d=1$, $2$, and $3$, with boundary~$\partial D$.
The space of Lebesgue measurable and square integrable functions over~$D$ is~$L^2(D)$.
The Sobolev space of positive integer order~$s$ is~$H^s(D)$.
We endow~$H^s(D)$ with the inner product, seminorm, and norm
\[
(\cdot,\cdot)_{s,D},
\qquad\qquad\qquad
\SemiNorm{\cdot}_{s,D},
\qquad\qquad\qquad
\Norm{\cdot}_{s,D}.
\]
Interpolation theory is used to construct Sobolev spaces of positive noninteger order;
duality is used to define negative order Sobolev spaces.
We shall be particularly using the space~$H^{-1}(D)$,
which is the dual of~$H^1_0(D)$;
the duality pairing between the two spaces
is $\langle \cdot, \cdot \rangle$.
The space of polynomials of nonnegative degree~$\p$ over~$D$ is~$\Pbb_\p(D)$.

Given~$\mathcal X$ a real Banach space with norm~$\Norm{\cdot}_{\mathcal X}$,
an interval $I$, and~$t$ larger than or equal to~$1$,
we define $L^t(I; \mathcal X)$ as the Bochner space of measurable functions~$v$ from $I$ to~$\mathcal X$
such that the following quantity is finite:
\[
\Norm{v}_{L^t(I; \mathcal X)} :=
\begin{cases}
\left( \int_I \Norm{v}_{\mathcal X}^t \right)^{\frac1t} dt & \text{for } 1\le t < \infty \\
\text{ess\ sup}_{t \in I} \Norm{v}_{\mathcal X}                       & \text{for } t=\infty.
\end{cases}
\]
For~$s$ in~$\Nbb$, the space~$H^s(I;\mathcal X)$ is the space of measurable functions~$v$
whose derivatives in time up to order~$s$ belong to $L^2(I;\mathcal X)$.
For any real number~$s$ larger than or equal to~$0$,
the space~$H^s(I;\mathcal X)$ is constructed using interpolation
of integer order Bochner spaces.
Bochner inner products are denoted
by $(\cdot,\cdot)_{L^t(I;\mathcal X)}$ and $(\cdot,\cdot)_{H^s(I;\mathcal X)}$.

To avoid confusion, the seminorm symbol $\SemiNorm{\cdot}$
is only used to denote spatial seminorms.
Seminorms in time are rather displayed as $L^2$ norms
of a suitable time derivative.
The first and second derivative symbols
are $\cdot'$ and~$\cdot''$;
time derivatives of order $s$ larger than~$2$ 
are displayed as $\cdot^{(s)}$.

\paragraph*{The continuous problem.}
Let~$\Omega$ be a polytopic, Lipschitz domain in~$\Rbb^d$, $d=1,2,3$;
$T$ a positive final time;
$\QT:=(0,T] \times \Omega$ the space--time cylinder;
$\uzero$ in~$H_0^1(\Omega)$;
$\uone$ in~$L^2(\Omega)$;
$f$ in~$L^2(0,T; L^2(\Omega))$.

Given~$\Deltax \cdot$ and~$\nablax \cdot$
the spatial Laplace and gradient operator,
we consider the following problem:
find~$u:\QT \to \Rbb$ such that
\begin{equation} \label{strong-problem}
\begin{cases}
    u'' - \Deltax u = f      & \text{in } \QT \\
    u(t,\cdot) = 0      & \text{on } (0,T] \times \partial\Omega\\
    u(0,\cdot)     = \uzero(\cdot) & \text{on } \Omega\\
    u'(0,\cdot)    = \uone(\cdot)  & \text{on } \Omega.\\
\end{cases}
\end{equation}
Introduce the spaces
\[
X:= H^2(0,T; H^{-1}(\Omega))
\cap L^2(0,T; \HozOmega)
\cap H^1(0,T; L^2(\Omega)),
\qquad\qquad
Y:= L^2(0,T; \HozOmega),
\]
and the bilinear form on~$H^1_0(\Omega)\times H^1_0(\Omega)$
\[
a(u,v):= (\nablax u, \nablax v)_{0,\Omega} .
\]
We consider the following weak formulation of problem~\eqref{strong-problem}:
\begin{equation} \label{weak-problem}
\begin{cases}
\text{find } u \in X \text{ such that }\\
\int_{0}^{T} [\langle u'', v \rangle
+  a(u,v)]dt
= \int_{0}^{T} (f, v)_{0,\Omega} dt \qquad\qquad \forall v \in Y \\
u(0,\cdot)    = \uzero(\cdot) \text{ in } H_0^1(\Omega),
\qquad
u'(0,\cdot)    = \uone(\cdot) \text{ in } L^2(\Omega) .
\end{cases}
\end{equation}
Problem~\eqref{strong-problem} is well posed;
see, e.g., \cite[Chapter~8]{Raviart-Thomas:1983}.

In~\cite{Walkington:2014}, inhomogeneous Dirichlet
and inhomogeneous Neumann boundary conditions are considered;
This results in further complication in the analysis below,
cf. \cite[Example 3.2]{Walkington:2014}.
For this reason, we prefer to stick to the setting in~\eqref{weak-problem}.

\paragraph*{Spatial meshes, time grids, and polynomial degree distributions.}
We consider either a simplicial or tensor-product
conforming mesh~$\Tcalh$ of~$\Omega$
and a corresponding $H^1$-conforming Lagrangian finite element space~$V_h$
of uniform order~$\px$.
We assume the existence of a constant~$\gamma$ in $(0,1)$
such that each element of~$\Tcalh$ is star-shaped with respect to a ball
of radius larger than or equal to the diameter of that element;
moreover, we assume local quasi-uniformity of the mesh, i.e.,
given~$\h_1$ and~$\h_2$ the diameters
of two arbitrary \lm{neighbouring} elements with $\h_1\le \h_2$,
one has $\h_2 \le \gamma \h_1$.
Throughout, $\h$ denotes the maximum of all diameters
of the elements in~$\Tcalh$.

We further consider a decomposition~$0=t_0 < t_1 < \dots < t_N = T$
of~$[0,T]$
and introduce~$\taun:= t_n-t_{n-1}$ for all $n=1,\dots,N$.
With each time interval~$\In:=(\tnmo,\tn]$,
we associate a local polynomial degree~$\pnt$;
we collect such polynomial orders (in time) in the
vector~$\pbft$ in~$\Nbb_+^{N}$, and set $\pbft_{n}:=\pnt$.
For~$k$ in~$\mathbb Z$,
$\pbft+k$ is the vector of entries $\pnt+k$.

For all $n=1,\dots,N$,
we set $(v')^+(\tnmo,\cdot):=v'_{|\In}(\tnmo,\cdot)$
and $(v')^-(\tnmo,\cdot):=v'_{|\Inmo}(\tnmo,\cdot)$;
for all~$v$ piecewise continuous in time,
we set $v^+(\tnmo,\cdot):=v_{|\In}(\tnmo,\cdot)$
and $v^-(\tnmo,\cdot):=v_{|\Inmo}(\tnmo,\cdot)$.

We define the tensor product space
\[
\Pbb_{\pnt}(\In; \Vh)
:= \{ \Wh \in L^2(\In;\Vh) \mid \Wh = a(x) b(t),
    \; a\in\Vh,\ b\in \Pbb_{\pnt}(\In) \}.
\]
Throughout we assume that
\[
\pnt\ge 2
\qquad\qquad\qquad
\forall n=1,\dots,N.
\]

\paragraph*{The fully-discrete $\mathcal C^0$-in-time method.}
Let~$\uzeroh$ and~$\uoneh$ be approximations
of~$\uzero$ and~$\uone$ in~$\Vh$.
Throughout, we pick~$\uzeroh$
as the elliptic projection of~$\uzero$
defined in display~\eqref{elliptic-projector} below;
$\uoneh$ as the $L^2$-orthogonal projection of~$\uone$ onto~$\Vh$.
Other variants are possible, but are omitted here;
if these projections are not considered,
we may get estimates with nonsharp constants on occasions, e.g.,
in Theorem~\ref{theorem:stability} below.

We define
\[
\Xh :=  \{ \uh \in \mathcal C^0 (0,T; \Vh) \mid
                \uh(0,\cdot) = \uzeroh , \;
                \uh{}_{|\In} \in \Pbb_{\pnt}(\In; \Vh)
                \; \forall n =1,\dots,N\}
\]
and the upwind jump operator for the time derivative as
\[
\jump{\uh'}(\tnmo,\cdot)=
\begin{cases}
\uh'{}_{|I_1}(0,\cdot) - \uoneh(\cdot)
        & \text{if } n=1 \\
\uh'{}_{|\In}(\tnmo,\cdot) -\uh'{}_{|\Inmo}(\tnmo,\cdot)
        & \text{if } n=2,\dots,N .
\end{cases}
\]
The $\mathcal C^0$-in-time method,
see, e.g., \cite{Walkington:2014}, reads as follows:
find~$\uh$ in~$\Xh$ such that
\begin{equation} \label{Walkington:method}
\begin{split}
& \int_{\In} [(\uh'',\Wh)_{0,\Omega} + a(\uh,\Wh) ] dt
 + (\jump{\uh'}(\tnmo,\cdot),\Whplus(\tnmo,\cdot))_{0,\Omega}\\
& \qquad   = (f,\Wh)_{L^2(\In;L^2(\Omega))}
 \qquad\qquad\qquad \forall \Wh \in \Pbb_{\pnt-1}(\In; \Vh),
\quad \forall n=1,\dots,N .
\end{split}
\end{equation}
The initial condition~$\uzeroh$ is imposed strongly in~$\Xh$;
the initial condition~$\uoneh$ is imposed weakly
through the upwind term at the initial time.

Method~\eqref{Walkington:method} is solved time-slab by time-slab as a time marching scheme.
Initial conditions on each time slab are assigned taking the values
of the solution at the final time of the previous slab
and upwinding the first time derivative.

The existence and uniqueness of a solution and the data of
method~\eqref{Walkington:method}
follow, e.g., assuming sufficient smoothness of the
solution to problem~\eqref{weak-problem},
showing stability estimates
as those in Theorem~\ref{theorem:stability} below
(which imply uniqueness),
and using the fact that on each time slab
the linear system to solve is square
(which entails that existence is equivalent to uniqueness).

\paragraph*{The semi-discrete in time $\mathcal C^0$-in-time method.}
Define the space
\[
\Xtau:=
\{ \U \in \mathcal C^0 (0,T; \HozOmega) \mid
                \U(0,\cdot) = \uzero , \;
                \U{}_{|\In} \in \Pbb_{\pnt}(\In; \HozOmega)\}.
\]
In Section~\ref{section:apos} below,
we prove fully explicit, reliable a posteriori error estimates
for the time semi-discrete in time version of~\eqref{Walkington:method}.
More precisely, we look for $\U$ in~$\Xtau$ such that
\begin{equation} \label{Walkington:method-time-semi-discrete}
\begin{split}
\int_{\In} [(\U'',\V)_{0,\Omega} + a(\U,\V) ] dt
& + (\jump{\U'}(\tnmo,\cdot),\Vplus(\tnmo,\cdot))_{0,\Omega}
   = (f,\V)_{L^2(0,T;L^2(\Omega))}\\
& \forall \V \in \Pbb_{\pnt-1}(\In; \HozOmega),
\quad \forall n=1,\dots,N,
\end{split}
\end{equation}
with~$\uzero$ imposed strongly in~$\Xtau$
and~$\uone$ imposed weakly through upwinding.

Let~$\Pizpbftmo$ denote the piecewise~$L^2$ projector onto~$\Pbb_{\pnt-1}(\In; L^2(\Omega))$
for all $n=1,\dots,N$.
We can replace the right-hand side of~\eqref{Walkington:method-time-semi-discrete} with the following expression:
\begin{equation} \label{rhs:semidiscrete}
(f,\V)_{L^2(\In;L^2(\Omega))}
= (\Pizpbftmo f,\V)_{L^2(\In;L^2(\Omega))}
\qquad\qquad \forall n=1,\dots,N .
\end{equation}
The well-posedness of~\eqref{Walkington:method-time-semi-discrete}
follows from standard arguments of DG time-stepping schemes;
cf. \cite[Ch.~12]{Thomee:2007}.

\paragraph*{Structure of the remainder of the paper.}
We discuss stability and error estimates
of the fully-discrete method~\eqref{Walkington:method}
in Section~\ref{section:a-priori}, which are explicit
in the spatial mesh size, the time steps,
and the polynomial degrees.
A posteriori error estimates for the semi-discrete
in time method~\eqref{Walkington:method-time-semi-discrete}
are derived in Section~\ref{section:apos}.
We assess the numerical findings with
numerical experiments in Section~\ref{section:numerical-experiments},
and draw some conclusions in Section~\ref{section:conclusions}.

\section{A priori error analysis} \label{section:a-priori}
This section is concerned with proving stability
and a priori estimates for method~\eqref{Walkington:method}:
in Section~\ref{subsection:stability-p-explicit},
we show stability estimates
following the analysis in~\cite{Walkington:2014}
by tracking the explicit dependence on the polynomial
degree distributions;
in Section~\ref{subsection:integrated-Thomee},
we introduce and show the approximation
properties of an integrated Thom\'ee-type operator
in terms of the spatial mesh size, the time steps, and the polynomial degrees,
which are instrumental in deriving abstract error estimates
in Section~\ref{subsection:abstract-analysis};
standard polynomial approximation results
yield error estimates in Section~\ref{subsection:h-p-convergence}.

\subsection{Stability estimates} \label{subsection:stability-p-explicit}
The stability of method~\eqref{Walkington:method} in certain norms
is investigated in~\cite[Theorem~4.5]{Walkington:2014}.
The main idea behind the derivation of stability estimates
is to take a suitable test function,
namely the $L^2$ projection onto the correct test space
of the time derivative of the discrete solution
times a weight mimicking an exponential function;
this idea traces back to \cite{French:1993};
cf. \cite{ChaumontFrelet:2023, ChaumontFrelet-Ern:2025}
for more recent similar results.
The reason for this is that testing only with
the time derivative of the discrete solution
would yield to stability estimates
at the time grid point only,
i.e., no global stability estimates would be available;
see \cite[eq. (4.1)]{Walkington:2014}.

The constants in the stability estimates in~\cite{Walkington:2014},
depend implicitly on the distribution~$\pbft$ of polynomial degrees in time;
for this reason, we revisit that proof so as
to carry out an explicit analysis in terms of the polynomial
distribution in time.

To this aim,
given a generic element~$g$ in the space~$\Xh$,
let~$m=m(g)$ so that~$\Im$ is the interval where
\small{
\begin{equation} \label{selecting-m}
\Norm{g'}_{L^\infty(\Im;L^2(\Omega))}^2
    + \SemiNorm{g}_{L^\infty(\Im;H^1(\Omega))}^2
= \max_{n=1}^N \Big( \Norm{g'}_{L^\infty(\In;L^2(\Omega))}^2
 + \SemiNorm{g}_{L^\infty(\In;H^1(\Omega))}^2 \Big),
\end{equation}}\normalsize
The index~$m$ depends on the choice of~$g$
and can be expected on most occasions
to be attained at the final interval.
We also define
\begin{equation} \label{mun}
\mun := \frac{1}{1024 (\pnt)^2 (2 \pnt+1)}
\qquad\qquad\qquad \forall n=1,\dots,N.
\end{equation}

\begin{theorem} \label{theorem:stability}
Let~$\uh$ be the solution to~\eqref{Walkington:method}
and~$f$ be the source term in~\eqref{strong-problem}.
Let~$\uzeroh$ be the elliptic projection of~$\uzero$
and $\uoneh$ be the $L^2$-orthogonal projection of~$\uone$ onto~$\Vh$.
The following stability estimate holds true\footnote{The norms of $\uhpr$
increase cubically in $p$ with respect to to norm of~$f$,
and with rate $p^\frac32$ with respect to the norm of the initial conditions.}:
\begin{equation} \label{discrete-stability-estimate}
\begin{split}
& \mum \left(
    \Norm{\uhpr}_{L^\infty(\Im; L^2(\Omega))}^2
    + \SemiNorm{\uh}_{L^\infty(\Im; H^1(\Omega))}^2 \right)
    + \frac14 \sum_{n=1}^m
 \Norm{\jump{\uhpr}(\tnmo,\cdot)}_{0,\Omega}^2 \\
&    \le \frac12 \left(\SemiNorm{\uzero}_{1,\Omega}^2
        + \Norm{\uone}_{0,\Omega}^2 \right)
        + \frac{\tm}{\mum}
            \Norm{f}_{L^2(0,\tm;L^2(\Omega))}^2.
\end{split}
\end{equation}
\end{theorem}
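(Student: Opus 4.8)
The plan is to follow the test-function strategy of \cite{French:1993,Walkington:2014} while tracking every constant's dependence on the time-degree distribution $\pbft$. The starting point is the \emph{energy identity} produced by the admissible choice $\vh=\uhpr$ in \eqref{Walkington:method}; this is legitimate since $\uh{}_{|\In}\in\Pbb_{\pnt}(\In;\Vh)$ forces $\uhpr{}_{|\In}\in\Pbb_{\pnt-1}(\In;\Vh)$. On each slab the two consistency terms collapse to exact time derivatives, $(\uh'',\uhpr)_{0,\Omega}=\tfrac12\tfrac{d}{dt}\Norm{\uhpr}_{0,\Omega}^2$ and $a(\uh,\uhpr)=\tfrac12\tfrac{d}{dt}\SemiNorm{\uh}_{1,\Omega}^2$; writing $\mathcal E(t):=\Norm{\uhpr(t)}_{0,\Omega}^2+\SemiNorm{\uh(t)}_{1,\Omega}^2$ and applying the elementary identity $(a-b,a)_{0,\Omega}=\tfrac12\Norm{a}_{0,\Omega}^2-\tfrac12\Norm{b}_{0,\Omega}^2+\tfrac12\Norm{a-b}_{0,\Omega}^2$ to the upwind term, the sum over $n=1,\dots,m$ telescopes to
\begin{equation*}
\tfrac12\mathcal E(\tm)+\tfrac12\sum_{n=1}^m\Norm{\jump{\uhpr}(\tnmo,\cdot)}_{0,\Omega}^2
=\tfrac12\big(\SemiNorm{\uzeroh}_{1,\Omega}^2+\Norm{\uoneh}_{0,\Omega}^2\big)+\sum_{n=1}^m(f,\uhpr)_{L^2(\In;L^2(\Omega))}.
\end{equation*}
Contractivity of the elliptic projection in the $H^1$-seminorm and of the $L^2$-projection bounds the discrete data by $\SemiNorm{\uzero}_{1,\Omega}$ and $\Norm{\uone}_{0,\Omega}$. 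This controls the endpoint energy and the jumps but, as noted after \eqref{selecting-m}, yields no bound on the supremum of $\mathcal E$ inside a slab.

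To upgrade endpoint control to $L^\infty$ control on the critical slab $\Im$ I would test \eqref{Walkington:method} on $\Im$ with the projected weighted derivative $\vh=\Pizpbftmo\big(w\,\uhpr\big)$, where $w(t):=e^{-\mum(t-\tmmo)}$ is the exponential weight of the classical French device. Replacing $\vh$ by the exact $w\,\uhpr$ up to the projection defect, integration by parts generates---besides the boundary values $\tfrac{w}{2}\mathcal E\big|_{\tmmo}^{\tm}$ and the weighted jump term---the genuinely \emph{coercive} quantity $-\tfrac12\int_{\Im}w'\,\mathcal E\,dt=\tfrac{\mum}{2}\int_{\Im}w\,\mathcal E\,dt\ge0$. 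Since $t\mapsto\Norm{\uhpr(t)}_{0,\Omega}^2$ and $t\mapsto\SemiNorm{\uh(t)}_{1,\Omega}^2$ are nonnegative polynomials in time of degree at most $2\pmt$ and $w\ge e^{-\mum\taum}$ on $\Im$, a Nikolskii-type polynomial inverse estimate---applied to each of the two polynomials separately---turns this weighted integral into a multiple of $\Norm{\uhpr}_{L^\infty(\Im;L^2(\Omega))}^2+\SemiNorm{\uh}_{L^\infty(\Im;H^1(\Omega))}^2$, with a constant that is explicit and polynomial in $\pmt$. The negative boundary contribution $-\tfrac12\mathcal E(\tmmo)$ is disposed of by invoking the energy identity of the previous step up to $\tmmo$.

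It remains to assemble the two estimates, to bound the source, and to control the projection defect $r:=w\,\uhpr-\Pizpbftmo(w\,\uhpr)$. By construction $r$ is $L^2(\Im)$-orthogonal in time to $\Pbb_{\pmt-1}$, hence invisible to the term carrying $\uh''$ (which has time-degree $\pmt-2$) and surviving only through the stiffness term $a(\uh,\cdot)$, the source, and the point contributions; each of these must be estimated with explicit dependence on $\pmt$. The right-hand sides $\sum_n(f,\uhpr)$ and $(f,w\,\uhpr)$ are handled by Cauchy--Schwarz and Young's inequality; because $\Im$ realizes the maximum in \eqref{selecting-m}, one has $\Norm{\uhpr}_{L^2(0,\tm;L^2(\Omega))}^2\le\tm\big(\Norm{\uhpr}_{L^\infty(\Im;L^2(\Omega))}^2+\SemiNorm{\uh}_{L^\infty(\Im;H^1(\Omega))}^2\big)$, which produces the factor $\tm/\mum$ once the Young parameter is tuned to $\mum$. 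I expect the \textbf{main obstacle} to be this last step: bounding, with fully explicit dependence on $\pmt$, both the defect $r$ and the Nikolskii constant, and verifying that the resulting error terms stay subcritical, i.e.\ small enough to be reabsorbed into the coercive quantity and to leave the jump terms with the stated weight $\tfrac14$. It is precisely this balance that forces the weight to be as small as $\mun\sim(\pnt)^{-3}$ in \eqref{mun}, and it is here that the present quantitative analysis departs from the $\pbft$-implicit estimates of \cite{Walkington:2014}.
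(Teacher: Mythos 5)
Your overall architecture coincides with the paper's (which in turn is Walkington's): the plain test $\vh=\uhpr$ giving the telescoped endpoint-energy/jump identity is exactly \cite[eq.~(4.1)]{Walkington:2014}; the weighted, $L^2$-projected derivative test on the critical slab, the $L^\infty$--$L^2$ polynomial inverse inequality in time, and a Young step tuned to $\mum^{-1}$ are precisely the remaining ingredients of the paper's proof. The intended difference is that you use a genuine exponential weight, where the paper (following \cite{French:1993}) uses its linearization $1-\lambdan(t-\tnmo)$ with $\lambdan:=\frac{1}{4(2\pnt+1)\taun}$.

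The gap is that the decisive quantitative steps --- which you yourself flag as the ``main obstacle'' --- are not carried out, and the one calibration you do commit to would fail. You take the exponential rate equal to $\mum\sim(\pmt)^{-3}$ from \eqref{mun}, a dimensionless quantity. Your coercive term is then $\frac{\mum}{2}\int_{\Im}w\,\mathcal{E}\,dt$, and after the inverse inequality (with constant $\frac{32(\pmt)^2}{\taum}$, applied componentwise) the left-hand coefficient it produces is of order $\mum\taum/(\pmt)^2$: it carries a spurious factor $\taum/(\pmt)^2$ and is in general strictly weaker than the coefficient $\mum$ in \eqref{discrete-stability-estimate}. The weight rate must instead scale like $1/((2\pnt+1)\taun)$, i.e.\ like the paper's $\lambdan$, for two reasons: (i) the time step then cancels against the inverse-inequality constant, $\frac{\lambdam}{4}\cdot\frac{\taum}{32(\pmt)^2}=2\mum$, which is exactly how \eqref{mun} arises --- so $\mun$ is the \emph{output} coefficient of the argument, not the weight rate, and your closing sentence conflates these two objects; (ii) the projection defect at the left endpoint obeys $\Norm{(\uhplus)'(\tnmo,\cdot)-\vhplus(\tnmo,\cdot)}_{0,\Omega}\le\lambdan\sqrt{(2\pnt+1)\taun}\,\Norm{\uhpr}_{L^2(\In;L^2(\Omega))}$ (Walkington's Corollary~4.4), and precisely with this $\lambdan$ one gets $\sqrt{(2\pnt+1)\taun\lambdan}=\frac12$, which is what allows the defect to be absorbed into the jump and coercive terms while leaving the jump sum with the stated weight $\frac14$. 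Note finally that this defect bound would have to be re-proved for your weight, since $e^{-\mum(t-\tmmo)}\uhpr$ is not a polynomial, whereas for the linear weight the defect is a single highest-order Legendre mode, which is what makes the sharp, $p$-explicit constant available. These balances are the actual mathematical content of the theorem; as written, your proposal is a correct plan whose critical steps are either deferred or mis-tuned.
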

\begin{proof}
Define
\[
\lambdan := \frac{1}{4(2\pnt+1)\taun}
\qquad\qquad\qquad \forall n=1,\dots,N.
\]
Restrict the (piecewise in time) $L^2$ projector $\Pizpbftmo$
to~$\Pbb_{\pnt-1}(\In;\Vh)$
and consider the following test function already used in~\cite[Theorem~4.5]{Walkington:2014}:
\[
\Wh{}_{|\In}:= \Pizpbftmo ([1-\lambdan(t-\tnmo)] \uhpr)
\qquad\qquad \forall n=1,\dots,N.
\]
Picking~$\Wh$ as above in \eqref{Walkington:method}
and proceeding as in the proof of \cite[Theorem~4.5]{Walkington:2014},
more precisely see~\cite[eq. (4.2)]{Walkington:2014},
yield, for all $n=1,\dots,N$,
\small{\[
\begin{split}
& (1-\lambdan\taun) \frac12 
        \left( \SemiNorm{\uh(\tn,\cdot)}_{1,\Omega}^2
        + \Norm{(\uhminus)'(\tn,\cdot)}_{0,\Omega}^2\right)
    + \frac{\lambdan}{2} \left( \Norm{\uhpr}_{L^2(\In;L^2(\Omega))}^2
    + \SemiNorm{\uh}_{L^2(\In;H^1(\Omega))}^2 \right)\\
& + \frac12 \Norm{\jump{\uhpr}(\tnmo,\cdot)}_{0,\Omega}^2
  \le  \frac12 \left( \SemiNorm{\uh(\tnmo,\cdot)}_{1,\Omega}^2
    + \Norm{(\uhminus)'(\tnmo,\cdot)}_{0,\Omega}^2 \right)\\
& + (\jump{\uhpr}(\tnmo,\cdot), (\uhplus)'(\tnmo,\cdot) - \Whplus(\tnmo,\cdot))_{0,\Omega}
        + \int_{\In} [1-\lambdan(\cdot-\tnmo)] (\Pizpbftmo f, \uhpr)_{0,\Omega}dt.
\end{split}
\]}\normalsize
We estimate the last two terms on the right-hand side separately:
one involving the jump of the first derivative at~$\tnmo$;
the other involving the source term~$f$.

As for the ``jump'' term, we invoke \cite[Corollary 4.4]{Walkington:2014}
and the definition of~$\lambdan$,
use standard manipulations, and get
\[
\begin{split}
& (\jump{\uhpr}(\tnmo,\cdot), (\uhplus)'(\tnmo,\cdot) - \Whplus(\tnmo,\cdot))_{0,\Omega}\\
& \le \Norm{\jump{\uhpr}(\tnmo,\cdot)}_{0,\Omega}
        \Norm{(\uhplus)'(\tnmo,\cdot) - \Whplus(\tnmo,\cdot)}_{0,\Omega}\\
& \le \Norm{\jump{\uhpr}(\tnmo,\cdot)}_{0,\Omega}
        \lambdan \sqrt{(2\pnt+1)\taun} \Norm{\uhpr}_{L^2(\In;L^2(\Omega))}\\
& \le \sqrt{(2\pnt+1)\taun\lambdan} \Big( \frac12 \Norm{\jump{\uhpr}(\tnmo,\cdot)}_{0,\Omega}^2
                              + \frac{\lambdan}{2} \Norm{\uhpr}_{L^2(\In;L^2(\Omega))}^2  \Big) \\
& = \frac12 \Big( \frac12 \Norm{\jump{\uhpr}(\tnmo,\cdot)}_{0,\Omega}^2
    + \frac{\lambdan}{2} \Norm{\uhpr}_{L^2(\In;L^2(\Omega))}^2  \Big)
\qquad \forall n=1,\dots,N.
\end{split}
\]
As for the ``source'' term, we write
\[
\begin{split}
&\int_{\In} [1-\lambdan(\cdot-\tnmo)] (\Pizpbftmo f, \uhpr)_{0,\Omega} dt
  \le   \Norm{\Pizpbftmo f}_{L^1(\In;L^2(\Omega))} \Norm{\uhpr}_{L^\infty(\In;L^2(\Omega))}.
\end{split}
\]
We combine the three displays above:
for all $n=1,\dots,N$,
\small{\[
\begin{split}
& (1-\lambdan\taun) \frac12 \left( \SemiNorm{\uh(\tn,\cdot)}_{1,\Omega}^2
   + \Norm{(\uhminus)'(\tn,\cdot)}_{0,\Omega}^2\right) \\
& \qquad    + \frac{\lambdan}{4} \left( \Norm{\uhpr}_{L^2(\In;L^2(\Omega))}^2
    + \SemiNorm{\uh}_{L^2(\In;H^1(\Omega))}^2 \right)
 + \frac14 \Norm{\jump{\uhpr}(\tnmo,\cdot)}_{0,\Omega}^2 \\
& \le \frac12 \left( \SemiNorm{\uh(\tnmo,\cdot)}_{1,\Omega}^2
    + \Norm{(\uhminus)'(\tnmo,\cdot)}_{0,\Omega}^2\right)
    + \Norm{\Pizpbftmo f}_{L^1(\In;L^2(\Omega))} 
        \Norm{\uhpr}_{L^\infty(\In;L^2(\Omega))}.
\end{split}
\]}\normalsize
Using that $\lambdan\taun < 1/4$, and $1-\lambdan\taun  \geq0$, we simplify this inequality:
\small{\begin{equation} \label{inequality-step-n}
\begin{split}
& \frac{\lambdan}{4} \left( \Norm{\uhpr}_{L^2(\In;L^2(\Omega))}^2
    + \SemiNorm{\uh}_{L^2(\In;H^1(\Omega))}^2 \right)
 + \frac14 \Norm{\jump{\uhpr}(\tnmo,\cdot)}_{0,\Omega}^2 \\
&    \le \frac12 \left( \SemiNorm{\uh(\tnmo,\cdot)}_{1,\Omega}^2
        + \Norm{(\uhminus)'(\tnmo,\cdot)}_{0,\Omega}^2 \right)
        +    \Norm{\Pizpbftmo f}_{L^1(\In;L^2(\Omega))} \Norm{\uhpr}_{L^\infty(\In;L^2(\Omega))}.
\end{split}
\end{equation}}\normalsize
In~\cite[eq. (4.1)]{Walkington:2014}, a stability estimate
is proven taking $\uhpr$ as a test function,
which gives, for all $n=2,\dots,N$,
\footnotesize\[
\begin{split}
& \frac12 \Big( \SemiNorm{\uh(\tnmo,\cdot)}_{1,\Omega}^2
                  + \Norm{(\uhminus)'(\tnmo,\cdot)}_{0,\Omega}^2 \Big)
    + \frac14 \sum_{\ell=1}^{n-1}
        \Norm{\jump{\uhpr}(t_{\ell-1},\cdot)}_{0,\Omega}^2
\le \frac12 \Big( \SemiNorm{\uh(0,\cdot)}_{1,\Omega}^2
    + \Norm{(\uhminus)'(0,\cdot)}_{0,\Omega}^2 \Big) \\
& \qquad +  \Norm{\Pizpbftmo f}_{L^1(0,\tnmo;L^2(\Omega))}
    \Big( \frac12 \Norm{\uhpr}_{L^\infty(0,\tnmo;L^2(\Omega))}^2
    + \frac12 \SemiNorm{\uh}_{L^\infty(0,\tnmo;H^1(\Omega))}^2\Big)^{\frac12}.
\end{split}
\]\normalsize
Inserting the bound
\[
\Norm{\uhpr}_{L^\infty(\In;L^2(\Omega))}
\leq \Big( \Norm{\uhpr}_{L^\infty(\In;L^2(\Omega))}^2
+ \SemiNorm{\uh}_{L^\infty(\In;H^1(\Omega))}^2 \Big)^{\frac12}
\qquad\qquad \forall n=1,\dots,N
\]
in~\eqref{inequality-step-n},
selecting~$n$ to be~$m=m(\uh)$ as in~\eqref{selecting-m},
and
combining the three displays above give
\small{\begin{equation*}
\begin{split}
& \frac{\lambdam}{4}
    \left( \Norm{\uhpr}_{L^2(\Im; L^2(\Omega))}^2
+ \SemiNorm{\uh}_{L^2(\Im;H^1(\Omega))}^2 \right)
 + \frac14 \sum_{n=1}^m
        \Norm{\jump{\uhpr}(\tnmo,\cdot)}_{0,\Omega}^2 \\
&    \le \frac12 \left( \SemiNorm{\uh(0,\cdot)}_{1,\Omega}^2
    + \Norm{(\uhminus)'(0,\cdot)}_{0,\Omega}^2 \right) \\
&   \qquad  + 2 \Big( \sum_{n=1}^m
        \Norm{\Pizpbftmo f}_{L^1(\In;L^2(\Omega))} \Big)
        \Big( \Norm{\uhpr}_{L^\infty(\Im;L^2(\Omega))}^2
    + \SemiNorm{\uh}_{L^\infty(\Im;H^1(\Omega))}^2
        \Big)^{\frac12}.
\end{split}
\end{equation*}}\normalsize
We recall the one dimensional $L^\infty$ to $L^2$-norm
polynomial inverse inequality in \cite[eq. (3.6.4)]{Schwab:1998}:
\[
 \SemiNorm{\uh}_{L^\infty(\Im;H^1(\Omega))}^2
 \leq  \frac{32(\pmt)^2}{\taun}
    \SemiNorm{\uh}_{L^2(\Im;H^1(\Omega))}^2.
\]
Using that $2\mum = 1/(512 (\pmt)^2 (2 \pmt+1))$,
we deduce
\small{\begin{equation*}
\begin{split}
& \frac{1}{512 (\pmt)^2 (2 \pmt+1)} \left( \Norm{\uhpr}_{L^\infty(\Im;L^2(\Omega))}^2
        + \SemiNorm{\uh}_{L^\infty(\Im;H^1(\Omega))}^2\right)
 + \frac14 \sum_{n=1}^m
    \Norm{\jump{\uhpr}(\tnmo,\cdot)}_{0,\Omega}^2 \\
& \le \frac12 \left( \SemiNorm{\uh(0,\cdot)}_{1,\Omega}^2
    + \Norm{(\uhminus)'(0,\cdot)}_{0,\Omega}^2 \right) \\
& \qquad + 2 \Big( \sum_{n=1}^m
        \Norm{\Pizpbftmo f}_{L^1(\In;L^2(\Omega))} \Big)
        \Big( \Norm{\uhpr}_{L^\infty(\Im;L^2(\Omega))}^2
        + \SemiNorm{\uh}_{L^\infty(\Im;H^1(\Omega))}^2
        \Big)^{\frac12}.
\end{split}
\end{equation*}}\normalsize
Using Young's inequality
$2ab \leq \varepsilon a^2
            + \frac{b^2}{\varepsilon}$,
with $\varepsilon = 1024 (\pmt)^2 (2 \pmt+1)
=:\mum^{-1}$
for the last term in the above relation,
we infer
\small{\begin{equation*}
\begin{split}
& \mum
    \left( \Norm{\uhpr}_{L^\infty(\Im; L^2(\Omega))}^2
    + \SemiNorm{\uh}_{L^\infty(\Im; H^1(\Omega))}^2 \right)
 + \frac14 \sum_{n=1}^m \Norm{\jump{\uhpr}(\tnmo,\cdot)}_{0,\Omega}^2 \\
&    \le \frac12 \left( \SemiNorm{\uh(0,\cdot)}_{1,\Omega}^2
        + \Norm{(\uhminus)'(0,\cdot)}_{0,\Omega}^2 \right)
        + \frac{1}{\mum}  \Big( \sum_{n=1}^m
        \Norm{\Pizpbftmo f}_{L^1(\In;L^2(\Omega))} \Big)^2.
\end{split}
\end{equation*}}\normalsize
We use the H\"{o}lder inequality,
the stability of the $L^2$ projector
in the $L^2(\In)$ norm,
and the $\ell^2$ Cauchy–Schwarz inequality
in the last term on the right-hand side above:
\begin{equation} \label{reason-difference-Walkington}
\sum_{n=1}^m \Norm{\Pizpbftmo f}_{L^1(\In;L^2(\Omega))}
\leq \sum_{n=1}^m  \taun^{\frac12}
        \Norm{f}_{L^2(\In;L^2(\Omega))}
\le \tm^{\frac12} \Norm{f}_{L^2(0,\tm;L^2(\Omega))}.
\end{equation}
The assertion follows by using the stability
of the discrete initial conditions~$\uzeroh$
and~$\uoneh$ with respect to the $H^1_0(\Omega)$ and $L^2(\Omega)$ norms.
\end{proof}

\begin{remark} \label{remark:norms-stability}
The norms appearing on the left-hand side of~\eqref{discrete-stability-estimate}
are of the same sort of those in~\cite[Theorem 4.5]{Walkington:2014}.
Instead, that on the right-hand side
involving the source term~$f$ differs a bit:
here, we employ an $L^2(L^2)$-type norm weighted
with $\tm^{\frac12} / \mum^\frac12$,
which scales exactly as the $L^1(L^2)$ norm used by Walkington.
The reason for this change
resides in inequality~\eqref{reason-difference-Walkington};
by proceeding as in~\cite{Walkington:2014},
we would end up with further suboptimality by two orders
in the polynomial degree due to the $L^1$ stability of the $L^2$ projector.
\end{remark}

\subsection{An integrated Thom\'ee-type operator} \label{subsection:integrated-Thomee}
Given a Hilbert space~$\mathcal X$
with inner product $(\cdot,\cdot)_{\mathcal X}$
and induced norm~$\Norm{\cdot}_{\mathcal X}$,
we introduce an operator~$\Pcalpbftt$
mapping $\mathcal C^1(0,T; \mathcal X)$
into the space of piecewise polynomials with polynomial
distribution~$\pbft$ over the time grid.
This operator is the integrated version of an operator in~\cite{Thomee:2007}
in the framework of parabolic problems, see also~\cite{Aziz-Monk:1989},
and whose $\p$-approximation properties are derived in~\cite{Schoetzau-Schwab:2000}.

The operator~$\Pcalpbftt$ is piecewise defined fixing the following conditions:
for all $n=1,\dots,N$,
\begin{equation*}
\begin{cases}
(w'-\Pcalpbftt(w)', q_{\pnt-2})_{L^2(\In;\mathcal X)} = 0   \qquad \forall n=1,\dots,N, \quad \forall q_{\pnt-2} \in \Pbb_{\pnt-2}(\In;\mathcal X); \\
\Pcalpbftt(w)'(\tnminus,\cdot) = w'(\tn,\cdot);
\qquad \Pcalpbftt(w)(\tnmo,\cdot) = w(\tnmo,\cdot)
\qquad \text{in }\mathcal X.
\end{cases}
\end{equation*}
As shown in \cite[Section~5.1]{Walkington:2014},
this operator is well defined
and the above definition is equivalent to
\small{\begin{equation} \label{equivalent-definition-Pcalpt}
\begin{cases}
(w-\Pcalpbftt(w), q_{\pnt-3})_{L^2(\In; \mathcal X)} = 0   \qquad \forall n=1,\dots,N, \quad \forall q_{\pnt-3} \in \Pbb_{\pnt-3}(\In;\mathcal X), \\
\Pcalpbftt(w)'(\tnminus,\cdot) = w'(\tn,\cdot),
\qquad \Pcalpbftt(w)(\tn,\cdot) = w(\tn,\cdot),
\qquad \Pcalpbftt(w)(\tnmo,\cdot) = w(\tnmo,\cdot)
\quad \text{in }\mathcal X,
\end{cases}
\end{equation}}\normalsize
where the orthogonality condition
in the first line of~\eqref{equivalent-definition-Pcalpt}
is not taken into account if $\pnt=2$.

The function~$\Pcalpbftt(w)$
is globally continuous
but not necessarily $\mathcal C^1$ in time
and satisfies the following property.

\begin{lemma}
For all~$u$ in $H^2(\In;\mathcal X)$ and all~$q_{\pnt-1}$ in~$\Pbb_{\pnt-1}(\In;\mathcal X)$,
the following identity holds true:
\begin{equation} \label{useful-property-Pcalpt}
( (u-\Pcalpbftt u)'', q_{\pnt-1})_{L^2(\In,\mathcal X)}
=  (\jump{(\Pcalpbftt u)'}(\tnmo,\cdot),
        q_{\pnt-1}(\tnmo,\cdot))_{\mathcal X}
\qquad \forall n=1,\dots,N.
\end{equation}
\end{lemma}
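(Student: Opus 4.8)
The plan is to prove the identity on a fixed interval~$\In$ by a single integration by parts in time, using the first defining condition of~$\Pcalpbftt$ (orthogonality of the derivative against degree~$\pnt-2$ polynomials) to annihilate the resulting interior integral, and the endpoint conditions to identify the surviving boundary contribution with the jump. Fix~$n$ and abbreviate the interpolation error by $e := u - \Pcalpbftt u$, which lies in $H^2(\In;\mathcal X)$, and write $q := q_{\pnt-1} \in \Pbb_{\pnt-1}(\In;\mathcal X)$.

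First I would integrate by parts once in time,
\[
( e'', q)_{L^2(\In;\mathcal X)}
= (e'(\tn,\cdot), q(\tn,\cdot))_{\mathcal X}
- (e'(\tnmo,\cdot), q(\tnmo,\cdot))_{\mathcal X}
- (e', q')_{L^2(\In;\mathcal X)}.
\]
Since $q' \in \Pbb_{\pnt-2}(\In;\mathcal X)$ and $e' = u' - (\Pcalpbftt u)'$, the first condition in the definition of~$\Pcalpbftt$ (with $w = u$) forces the interior integral $(e', q')_{L^2(\In;\mathcal X)}$ to vanish; this is legitimate for every admissible degree since $\pnt \ge 2$ throughout. Thus only the two endpoint terms remain. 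The contribution at~$\tn$ also disappears: the endpoint derivative condition $\Pcalpbftt(u)'(\tnminus,\cdot) = u'(\tn,\cdot)$ (the one-sided derivative taken from within~$\In$) gives $e'(\tn,\cdot) = u'(\tn,\cdot) - \Pcalpbftt(u)'(\tnminus,\cdot) = 0$, so $(e'(\tn,\cdot), q(\tn,\cdot))_{\mathcal X} = 0$.

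It remains to identify the term at~$\tnmo$ with the jump. Interpreting $e'(\tnmo,\cdot)$ as the one-sided derivative taken from within~$\In$, and invoking the same endpoint derivative condition on the preceding interval~$\Inmo$, namely $(\Pcalpbftt u)'{}_{|\Inmo}(\tnmo,\cdot) = u'(\tnmo,\cdot)$ (with the initial datum $u'(0,\cdot)$ playing this role when $n=1$), one computes
\[
-e'(\tnmo,\cdot)
= (\Pcalpbftt u)'{}_{|\In}(\tnmo,\cdot) - u'(\tnmo,\cdot)
= (\Pcalpbftt u)'{}_{|\In}(\tnmo,\cdot) - (\Pcalpbftt u)'{}_{|\Inmo}(\tnmo,\cdot)
= \jump{(\Pcalpbftt u)'}(\tnmo,\cdot).
\]
Substituting this back yields $( e'', q)_{L^2(\In;\mathcal X)} = (\jump{(\Pcalpbftt u)'}(\tnmo,\cdot), q(\tnmo,\cdot))_{\mathcal X}$, which is exactly~\eqref{useful-property-Pcalpt}.

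This argument is essentially bookkeeping, so I do not anticipate a genuine analytic obstacle; the only delicate point is the consistent one-sided interpretation of the endpoint derivatives, together with the sign tracking that makes the term at~$\tnmo$ assemble into the jump rather than its negative. The structurally decisive feature is that the derivative-matching condition of~$\Pcalpbftt$ is imposed only at the \emph{right} endpoint of each interval: this is precisely what causes the boundary term at~$\tn$ to vanish while leaving a genuine, nonzero jump at~$\tnmo$.
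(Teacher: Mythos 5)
Your proof is correct and follows essentially the same route as the paper's: one integration by parts in time, the interior term annihilated by the orthogonality in the definition of~$\Pcalpbftt$, the boundary term at~$\tn$ killed by the right-endpoint derivative matching, and the term at~$\tnmo$ identified with the jump via the same matching condition on~$\Inmo$ (together with continuity of~$u'$ there). The only cosmetic difference is that you invoke the orthogonality of $(u-\Pcalpbftt u)'$ against $\Pbb_{\pnt-2}(\In;\mathcal X)$ from the original definition, whereas the paper cites the equivalent characterization~\eqref{equivalent-definition-Pcalpt}; these are interchangeable.
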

\begin{proof}
We have
\small{\[
\begin{split}
& ( (u-\Pcalpbftt u)'', q_{\pnt-1})_{L^2(\In,\mathcal X)} \\
& = - ( (u-\Pcalpbftt u)', q_{\pnt-1}')_{L^2(\In,\mathcal X)}
    + ( (u-\Pcalpbftt u)'(\tn,\cdot), q_{\pnt-1}(\tn,\cdot))_{\mathcal X}\\
& \qquad\qquad    - ( (u-\Pcalpbftt u)'(\tnmo,\cdot), q_{\pnt-1}(\tnmo,\cdot))_{\mathcal X} \\
& \overset{\eqref{equivalent-definition-Pcalpt}}{=}
    - ( (u-\Pcalpbftt u)'(\tnmo,\cdot), q_{\pnt-1}(\tnmo,\cdot))_{\mathcal X}
  \overset{\eqref{equivalent-definition-Pcalpt}}{=}
    (\jump{(\Pcalpbftt u)'}(\tnmo,\cdot), q_{\pnt-1}(\tnmo,\cdot))_{\mathcal X},
\end{split}
\]} \normalsize
which is the assertion.
\end{proof}

The scope of this section is showing the approximation properties of~$\Pcalpbftt$.
We recall the properties of the original Thom\'ee operator~\cite{Thomee:2007}.
Introduce~$\Pcaltildepbftt$
mapping $\mathcal C^0(0,T;\mathcal X)$
into the space of piecewise polynomials with degree distribution~$\pbft$
over the time grid (in particular, discontinuous functions in time).
The operator~$\Pcaltildepbftt$ is defined fixing the following conditions:
\begin{equation} \label{operator:SS}
\begin{cases}
(w-\Pcaltildepbftt(w), q_{\pnt-1})_{L^2(\In;\mathcal X)} = 0
\qquad \forall n=1,\dots,N, \quad \forall q_{\pnt-1} \in \Pbb_{\pnt-1}(\In;\mathcal X); \\
\Pcaltildepbftt(w)(\tnminus,\cdot) = w(\tn,\cdot)
\qquad \text{in }\mathcal X.
\end{cases}
\end{equation}
The following approximation results can be found
in \cite[Lemmas $3.6$--$3.8$, Theorem 3.10]{Schoetzau-Schwab:2000}
and \cite[Lemma 3.3]{Castillo-Cockburn-Schoetzau-Schwab:2002}.
\begin{lemma} \label{lemma:Schoetzau-Schwab}
Let~$\Pcaltildepbftt$ be the operator in~\eqref{operator:SS}.
Then, for all~$w$ in $H^{s+1}(\In;\mathcal X)$
with $s\geq0$,
the following inequalities hold true for all~$n=1,\dots,N$:
\small{\begin{subequations}
\begin{align}
\Norm{w-\Pcaltildepbftt w}_{L^2(\In;\mathcal X)}
&\lesssim \Norm{w-\Pizpbft w}_{L^2(\In;\mathcal X)}
        + \frac{\taun}{\pnt}
        \Norm{w'}_{L^2(\In;\mathcal X)},
        \label{Schoetzau-Schwab-a}\\
\begin{split}
\Norm{w-\Pcaltildepbftt w}_{L^2(\In;\mathcal X)}
& \lesssim \inf_{\qpnt\in\Pbb_{\pnt}(\In;\mathcal X)} \left( \Norm{w-\qpnt}_{L^2(\In;\mathcal X)}
    + \frac{\taun}{\pnt}
    \Norm{(w-\qpnt)'}_{L^2(\In;\mathcal X)} \right) \\
& \lesssim \left( \frac{\taun}{\pnt} \right)
        ^{\min(s,\pnt)+1} \Norm{w^{(s+1)}}_{L^2(\In;\mathcal X)},
\end{split}\label{Schoetzau-Schwab-b}\\
\Norm{\Pcaltildepbftt w}_{L^2(\In;\mathcal X)}
& \lesssim \Norm{w}_{L^2(\In;\mathcal X)}
        + \frac{\taun}{\pnt} \Norm{w'}_{L^2(\In;\mathcal X)},
        \label{Schoetzau-Schwab-c}\\
\begin{split}
\Norm{(w-\Pcaltildepbftt w)(\tnmo,\cdot)}_{\mathcal X}
& \lesssim \left( \frac{\taun}{\pnt} \right)^{\frac12}
\inf_{\qpnt \in \Pbb_{\pnt}(\In;\mathcal X)}
    \Norm{(w-\qpnt)'}_{L^2(\In;\mathcal X)} \\
& \lesssim \left( \frac{\taun}{\pnt} \right)
    ^{\min(s,\pnt)+\frac12} \Norm{w^{(s+1)}}_{L^2(\In;\mathcal X)}.
\end{split} \label{Schoetzau-Schwab-d}
\end{align}
\end{subequations}}
\normalsize
\end{lemma}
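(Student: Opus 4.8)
Since the four bounds are exactly the ones established by Schötzau and Schwab, the plan is to reconstruct their proofs from a single structural identity for $\Pcaltildepbftt$ and then feed that identity into standard $hp$-approximation estimates for the $L^2$ projector $\Pizpbft$. First I would record the key algebraic fact on each slab $\In$: after pulling back to a reference interval and expanding in the Legendre basis, the orthogonality condition in~\eqref{operator:SS} forces $\Pcaltildepbftt w$ to share with $\Pizpbft w$ all Legendre modes up to degree $\pnt-1$, so the two polynomials differ only in the top mode, whose coefficient is fixed by the endpoint condition $\Pcaltildepbftt w(\tnminus,\cdot)=w(\tn,\cdot)$. Writing $\ell_n$ for the degree-$\pnt$ Legendre polynomial on $\In$ normalized by $\ell_n(\tn,\cdot)=1$, a short computation with $L_k(1)=1$ identifies the extra coefficient as the endpoint error of the $L^2$ projection, giving the master decomposition
\[
w-\Pcaltildepbftt w = (w-\Pizpbft w) - (w-\Pizpbft w)(\tn,\cdot)\,\ell_n ,
\]
together with the elementary identities $\Norm{\ell_n}_{L^2(\In;\mathcal X)}^2=\taun/(2\pnt+1)$ and $|\ell_n(\tnmo,\cdot)|=1$. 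The embedding $H^{s+1}(\In;\mathcal X)\hookrightarrow\mathcal C^0(\overline\In;\mathcal X)$ for $s\ge0$ makes the endpoint evaluations meaningful.

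With this decomposition in hand, every estimate reduces to controlling the $L^2$-projection error and the scalar endpoint factor $(w-\Pizpbft w)(\tn,\cdot)$. For~\eqref{Schoetzau-Schwab-a} I would take the $L^2(\In;\mathcal X)$ norm of the decomposition: the first summand is precisely the term $\Norm{w-\Pizpbft w}_{L^2(\In;\mathcal X)}$ on the right-hand side, while the correction contributes $\Norm{(w-\Pizpbft w)(\tn,\cdot)}_{\mathcal X}(\taun/(2\pnt+1))^{1/2}$, which an endpoint bound for the $L^2$ projector turns into $\lesssim(\taun/\pnt)\Norm{w'}_{L^2(\In;\mathcal X)}$. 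Estimate~\eqref{Schoetzau-Schwab-c} follows the same way after adding and subtracting $\Pizpbft w$ and using that $\Pizpbft$ is an $L^2$ contraction. For~\eqref{Schoetzau-Schwab-b} and~\eqref{Schoetzau-Schwab-d} I would exploit that $\Pcaltildepbftt$ reproduces $\Pbb_{\pnt}(\In;\mathcal X)$ (if $q\in\Pbb_{\pnt}(\In;\mathcal X)$ then $\Pizpbft q=q$ and the correction vanishes), so by linearity $w-\Pcaltildepbftt w=(w-q)-\Pcaltildepbftt(w-q)$ for every such $q$; applying~\eqref{Schoetzau-Schwab-a} to $w-q$ and taking the infimum over $q$ yields the first bound in~\eqref{Schoetzau-Schwab-b}, while evaluating the decomposition at $\tnmo$ (where $|\ell_n(\tnmo,\cdot)|=1$) and using the endpoint-error bound gives the first bound in~\eqref{Schoetzau-Schwab-d}. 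The second, fully explicit bounds in~\eqref{Schoetzau-Schwab-b} and~\eqref{Schoetzau-Schwab-d} then follow by taking $q$ to be a quasi-optimal $hp$ approximant and invoking the standard simultaneous approximation rate $(\taun/\pnt)^{\min(s,\pnt)+1}$.

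The technical heart, and the step I expect to be the main obstacle, is the pair of sharp, $p$-explicit estimates for the $L^2$ projector that the reduction relies on: the interior estimate for $\Norm{w-\Pizpbft w}_{L^2(\In;\mathcal X)}$ and, above all, the endpoint-evaluation bound $\Norm{(w-\Pizpbft w)(\tn,\cdot)}_{\mathcal X}\lesssim(\taun/\pnt)^{1/2}\inf_{q}\Norm{(w-q)'}_{L^2(\In;\mathcal X)}$ with the correct $\pnt^{-1/2}$ gain. Both rest on the decay of Legendre coefficients and on the recurrence $L_k=(2k+1)^{-1}(L_{k+1}'-L_{k-1}')$, which trades one time derivative for a factor $\pnt^{-1}$; these are exactly the computations carried out in~\cite{Schoetzau-Schwab:2000, Castillo-Cockburn-Schoetzau-Schwab:2002}, which I would either import verbatim or reproduce through Legendre-coefficient bookkeeping on the reference interval followed by affine scaling back to $\In$. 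Everything else is tracking of constants and the scaling of norms between $\In$ and the reference interval.
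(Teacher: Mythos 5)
Your proposal is correct and takes essentially the same route as the paper: the paper proves this lemma by citation to \cite{Schoetzau-Schwab:2000, Castillo-Cockburn-Schoetzau-Schwab:2002} and then sketches precisely your master decomposition --- it is identity~\eqref{identity:SS}, written there with the opposite sign on the correction term, which is immaterial for the norm bounds --- together with the sharp Legendre endpoint estimates for $w-\Pizpbft w$ imported from those references. Your reductions of \eqref{Schoetzau-Schwab-b} and \eqref{Schoetzau-Schwab-d} to \eqref{Schoetzau-Schwab-a} via polynomial reproduction, and of \eqref{Schoetzau-Schwab-c} via the triangle inequality and $L^2$-contractivity of $\Pizpbft$, are exactly the arguments underlying the cited results.
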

On the reference interval~$\Ihat=[-1,1]$\footnote{With an abuse of notation, we use the same notation
for the operators on the reference interval.},
the bounds in Lemma~\ref{lemma:Schoetzau-Schwab} are a consequence
of the properties of the Legendre polynomials, the identity
\begin{equation} \label{identity:SS}
(w - \Pcaltildepbftt w)(t,\cdot)
= (w - \Pizpbft w)(t,\cdot)
    + (w-\Pizpbft w)(1,\cdot) L_p(t)
    \qquad \text{in } \mathcal X,
\end{equation}
and sharp estimates
\cite[Lemmas~3.5 and 3.6]{Schoetzau-Schwab:2000}
for the second term on the right-hand side of~\eqref{identity:SS}.
\medskip

The operators~$\Pcalpbftt$ and~$\Pcaltildepbftt$ in~\eqref{equivalent-definition-Pcalpt} and~\eqref{operator:SS}
are related by the following identity:
for all~$t$ in~$\In$,
\begin{equation} \label{relation:SS-Ptilde}
\Pcalpbftt (w)(t,\cdot)
= \int_{\tnmo}^t (\Pcaltildepbftmo (w'))(s,\cdot) \ ds + w(\tnmo,\cdot)
\quad \text{in } \mathcal X
\qquad\qquad \forall n=1,\dots,N.
\end{equation}
Combining~\eqref{relation:SS-Ptilde} with Lemma~\ref{lemma:Schoetzau-Schwab},
and using \cite[Theorem~3.10]{Schoetzau-Schwab:2000},
we get the following result.
\begin{lemma} \label{lemma:Schoetzau-Schwab-derivatives}
Let~$\Pcalpbftt$ be the operator in~\eqref{equivalent-definition-Pcalpt}.
Then, for all $w$ in $H^{s+1}(\In;\mathcal X)$
with $s\geq1$,
the following inequalities hold true for all~$n=1,\dots,N$:
\begin{subequations}
\begin{align}
\Norm{(w-\Pcalpbftt w)'}_{L^2(\In;\mathcal X)}
& \lesssim \Norm{w'-\Pizpbftmo w'}_{L^2(\In;\mathcal X)}
    + \frac{\taun}{\pnt} \Norm{w''}_{L^2(\In;\mathcal X)} \label{Schoetzau-Schwab-derivatives-a}, \\
\begin{split}
\Norm{(w-\Pcalpbftt w)'}_{L^2(\In;\mathcal X)}
& \lesssim \inf_{\qpnt \in \Pbb_{\pnt}(\In;\mathcal X)} \left( \Norm{(w-\qpnt)'}_{L^2(\In;\mathcal X)}
    + \frac{\taun}{\pnt} \Norm{(w-\qpnt)''}_{L^2(\In;\mathcal X)} \right) \\
& \lesssim \left( \frac{\taun}{\pnt} \right)
        ^{\min(s,\pnt)} \Norm{w^{(s+1)}}_{L^2(\In;\mathcal X)},
\end{split} \label{Schoetzau-Schwab-derivatives-b} \\
\Norm{(\Pcalpbftt w)'}_{L^2(\In;\mathcal X)}
& \lesssim \Norm{w'}_{L^2(\In;\mathcal X)}
    + \frac{\taun}{\pnt} \Norm{w''}_{L^2(\In;\mathcal X)}. \label{Schoetzau-Schwab-derivatives-c}
\end{align}
\end{subequations}
\end{lemma}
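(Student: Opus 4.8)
The plan is to differentiate the integral relation~\eqref{relation:SS-Ptilde} in time and thereby reduce every one of the three bounds to the corresponding estimate of Lemma~\ref{lemma:Schoetzau-Schwab} applied to~$w'$. Concretely, the fundamental theorem of calculus applied to~\eqref{relation:SS-Ptilde} yields the pointwise-in-time identity
\[
(\Pcalpbftt w)'(t,\cdot) = \Pcaltildepbftmo(w')(t,\cdot)
\qquad \text{in } \mathcal X, \quad t \in \In,
\]
whence
\[
(w - \Pcalpbftt w)' = w' - \Pcaltildepbftmo(w').
\]
This is the crux of the argument: the derivative of the error of the integrated operator coincides with the approximation error of the \emph{original} Thom\'ee operator applied to~$w'$, but with the degree distribution lowered by one, i.e., the operator acting is~$\Pcaltildepbftmo$, mapping into piecewise polynomials of local degree~$\pnt-1$.

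With this identity in hand, I would derive the three bounds by invoking Lemma~\ref{lemma:Schoetzau-Schwab} with~$w$ replaced by~$w'$, the local degree~$\pnt$ replaced by~$\pnt-1$, and the smoothness index~$s$ replaced by~$s-1$; the latter is legitimate since~$w\in H^{s+1}(\In;\mathcal X)$ with~$s\ge1$ gives~$w'\in H^{(s-1)+1}(\In;\mathcal X)=H^s(\In;\mathcal X)$ with~$s-1\ge 0$, matching the hypothesis of Lemma~\ref{lemma:Schoetzau-Schwab}. Then~\eqref{Schoetzau-Schwab-derivatives-a} follows from~\eqref{Schoetzau-Schwab-a}, noting that the projector~$\Pizpbft$ there becomes~$\Pizpbftmo$ after lowering the degree, and~\eqref{Schoetzau-Schwab-derivatives-c} follows directly from~\eqref{Schoetzau-Schwab-c}. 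For the first line of~\eqref{Schoetzau-Schwab-derivatives-b} I would apply~\eqref{Schoetzau-Schwab-b} and then reparametrize the infimum: since differentiation maps~$\Pbb_{\pnt}(\In;\mathcal X)$ onto~$\Pbb_{\pnt-1}(\In;\mathcal X)$, setting~$q_{\pnt-1}=(\qpnt)'$ shows that the infimum over~$q_{\pnt-1}$ of~$\Norm{w'-q_{\pnt-1}}_{L^2(\In;\mathcal X)}+\tfrac{\taun}{\pnt}\Norm{(w'-q_{\pnt-1})'}_{L^2(\In;\mathcal X)}$ equals the infimum over~$\qpnt$ of~$\Norm{(w-\qpnt)'}_{L^2(\In;\mathcal X)}+\tfrac{\taun}{\pnt}\Norm{(w-\qpnt)''}_{L^2(\In;\mathcal X)}$. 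The algebraic rate in the second line of~\eqref{Schoetzau-Schwab-derivatives-b} comes from the second line of~\eqref{Schoetzau-Schwab-b} with the shifted smoothness index, using the identity~$\min(s-1,\pnt-1)+1=\min(s,\pnt)$.

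The only bookkeeping subtlety, and the place to be careful, is the mismatch between the factors~$\taun/(\pnt-1)$ produced by Lemma~\ref{lemma:Schoetzau-Schwab} at local degree~$\pnt-1$ and the factors~$\taun/\pnt$ appearing in the statement. This is harmless under the standing assumption~$\pnt\ge 2$: then~$\pnt-1\ge \pnt/2$, so~$1/(\pnt-1)\le 2/\pnt$, and the extra factor of~$2$ is absorbed into the hidden constant of the~$\lesssim$ symbol. I do not anticipate any genuine analytical difficulty beyond this index bookkeeping, since the sharp~$\p$-approximation estimates doing the heavy lifting are already established in Lemma~\ref{lemma:Schoetzau-Schwab} and \cite[Theorem~3.10]{Schoetzau-Schwab:2000}; the entire content of the present lemma is the observation that differentiating~\eqref{relation:SS-Ptilde} transfers those estimates from~$w$ to~$w'$.
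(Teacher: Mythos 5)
Your proposal is correct and takes essentially the same route as the paper: both hinge on differentiating~\eqref{relation:SS-Ptilde} to obtain $(w-\Pcalpbftt w)' = w' - \Pcaltildepbftmo w'$ and then apply Lemma~\ref{lemma:Schoetzau-Schwab} to~$w'$ at the lowered degree~$\pnt-1$, with the $\taun/(\pnt-1)$ versus $\taun/\pnt$ discrepancy absorbed using~$\pnt\ge 2$. The only cosmetic difference is in~\eqref{Schoetzau-Schwab-derivatives-b}, which the paper re-derives from~\eqref{Schoetzau-Schwab-derivatives-c} via the triangle inequality and the fact that~$\Pcalpbftt$ preserves polynomials of degree~$\pnt$, whereas you invoke~\eqref{Schoetzau-Schwab-b} for~$w'$ and reparametrize the infimum through the surjectivity of differentiation onto~$\Pbb_{\pnt-1}(\In;\mathcal X)$ --- equivalent arguments with the same content.
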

\begin{proof}
Inequality~\eqref{Schoetzau-Schwab-derivatives-a} follows
from~\eqref{Schoetzau-Schwab-a} and~\eqref{relation:SS-Ptilde}.
Inequality~\eqref{Schoetzau-Schwab-derivatives-c} follows
from~\eqref{Schoetzau-Schwab-c} and~\eqref{relation:SS-Ptilde}.
Inequality~\eqref{Schoetzau-Schwab-derivatives-b} follows
from~\eqref{Schoetzau-Schwab-derivatives-c}, the triangle inequality,
and the fact that~$\Pcalpbftt$ preserves polynomials
of degree~$\pnt$ in time for all $n=1,\dots,N$.
\end{proof}

We derive bounds for the operator~$\Pcalpbftt$ in $L^2$-type norms.
To this aim, we introduce
the piecewise $H^1$ projector~$\Pionepbftmo$
onto polynomials in time as follows:
\begin{equation} \label{Pionetpmo}
\begin{cases}
    ( (w-\Pionepbftmo w)', \qpntmo'  )_{0,\In} = 0
            & \forall w \in H^1(\In;\mathcal X), \quad \qpntmo \in \Pbb_{\pnt-1}(\In;\mathcal X); \\
    (w-\Pionepbftmo w)(\tnmo,\cdot) = 0
    \quad \text{in } \mathcal X
            & \forall n=1,\dots,N.
\end{cases}
\end{equation}
On the other hand, the operator~$\Pizpbftmo$ denotes the piecewise $L^2$ projector onto polynomials in time.

Some properties of the operator~$\Pcalpbftt$
are detailed in the next result.
\begin{proposition} \label{proposition:Pcaltildept-L2-approx}
Let~$\Pcalpbftt$ and~$\Pionepbftmo$ be the operators
in~\eqref{equivalent-definition-Pcalpt} and~\eqref{Pionetpmo}.
Then, the following inequality holds true:
for all~$w$ in $H^{s+1}(\In;\mathcal X)$ with $s\geq1$
\begin{equation} \label{Pcaltildept-L2-approx}
\Norm{w-\Pcalpbftt w}_{L^2(\In;\mathcal X)}
\lesssim \Norm{w-\Pionepbft w}_{L^2(\In;\mathcal X)}
        + \frac{\taun^2}{(\pnt)^2} \Norm{w''}_{L^2(\In;\mathcal X)}
\qquad\qquad \forall n=1,\dots,N.
\end{equation}
Since~$\Pcalpbftt$ preserves polynomials in time of order~$\pnt$,
we also have
\small{\begin{equation} \label{Pcaltildept-L2-approx-bis}
\begin{split}
\Norm{w-\Pcalpbftt w}_{L^2(\In;\mathcal X)}
&\lesssim \inf_{\qpnt \in \Pbb_{\pnt}(\In;\mathcal X)} \Big( \frac{\taun}{\pnt} \Norm{(w-\qpnt)'}_{L^2(\In;\mathcal X)}
    + \frac{\taun^2}{(\pnt)^2} \Norm{(w-\qpnt)''}_{L^2(\In;\mathcal X)} \Big) \\
& \lesssim \left( \frac{\taun}{\pnt} \right)
        ^{\min(s,\pnt)+1} \Norm{w^{(s+1)}}_{L^2(\In;\mathcal X)}
\qquad\qquad \forall n=1,\dots,N.
\end{split}
\end{equation}}\normalsize
\end{proposition}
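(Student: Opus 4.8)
The plan is to reduce the statement to the primitive relation \eqref{relation:SS-Ptilde}. Since $w$ is the time primitive of $w'$ and $\Pcalpbftt w$ is, by \eqref{relation:SS-Ptilde}, the primitive of $\Pcaltildepbftmo(w')$, and both primitives agree with $w(\tnmo,\cdot)$ at the left endpoint, subtracting them yields, for every $t\in\In$,
\[
(w-\Pcalpbftt w)(t,\cdot)=\int_{\tnmo}^{t}(w'-\Pcaltildepbftmo w')(s,\cdot)\,ds .
\]
First I would feed the Thom\'ee-to-$L^2$ decomposition \eqref{identity:SS}, written for $w'$ on $\In$ at degree $\pnt-1$, into this integrand: $w'-\Pcaltildepbftmo w'$ splits as the $L^2$-projection error $w'-\Pizpbftmo w'$ plus a correction that is a scalar multiple of the single (scaled) Legendre polynomial $\ell_{\pnt-1}$, the scalar being the endpoint value $(w'-\Pizpbftmo w')(\tn,\cdot)$.

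The key step is to integrate the two pieces separately. The primitive of $w'-\Pizpbftmo w'$ vanishing at $\tnmo$ is exactly $w-\Pionepbft w$: indeed, by definition of the $H^1$ projector one has $(\Pionepbft w)'=\Pizpbftmo w'$ and $(w-\Pionepbft w)(\tnmo,\cdot)=0$. The primitive of $\ell_{\pnt-1}$ vanishing at $\tnmo$ is the integrated Legendre polynomial $\widehat\ell_{\pnt}$, of degree $\pnt$. This produces the clean identity
\[
w-\Pcalpbftt w=(w-\Pionepbft w)\pm(w'-\Pizpbftmo w')(\tn,\cdot)\,\widehat\ell_{\pnt},
\]
so that, by the triangle inequality, proving \eqref{Pcaltildept-L2-approx} reduces to bounding the single-mode term $\Norm{(w'-\Pizpbftmo w')(\tn,\cdot)\,\widehat\ell_{\pnt}}_{L^2(\In;\mathcal X)}$ by $\tfrac{\taun^2}{(\pnt)^2}\Norm{w''}_{L^2(\In;\mathcal X)}$.

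For that term I would split the factor $\tfrac{\taun^2}{(\pnt)^2}$ into two gains of $\tfrac{\taun}{\pnt}$. The explicit reference-interval formula $\widehat\ell_{\pnt}=\tfrac{1}{2\pnt-1}(L_{\pnt}-L_{\pnt-2})$ and the Legendre normalisations give the integrated-Legendre ratio $\Norm{\widehat\ell_{\pnt}}_{L^2(\In)}\lesssim\tfrac{\taun}{\pnt}\Norm{\ell_{\pnt-1}}_{L^2(\In)}$, which is the first gain; the second is the sharp endpoint estimate of the correction term in \eqref{identity:SS} from \cite[Lemmas~3.5 and~3.6]{Schoetzau-Schwab:2000}, applied to $w'$ at degree $\pnt-1$, namely $|(w'-\Pizpbftmo w')(\tn,\cdot)|\,\Norm{\ell_{\pnt-1}}_{L^2(\In)}\lesssim\tfrac{\taun}{\pnt}\Norm{w''}_{L^2(\In)}$ (the harmless factor $\pnt/(\pnt-1)\le2$ being absorbed since $\pnt\ge2$). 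Multiplying the two bounds delivers \eqref{Pcaltildept-L2-approx}.

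Finally, for \eqref{Pcaltildept-L2-approx-bis} I would use that $\Pcalpbftt$ preserves time polynomials of degree $\pnt$: for any $\qpnt$ one writes $w-\Pcalpbftt w=(w-\qpnt)-\Pcalpbftt(w-\qpnt)$ and applies \eqref{Pcaltildept-L2-approx} to $w-\qpnt$. The standard $\p$-version $L^2$ estimate for the $H^1$-projection error, $\Norm{\phi-\Pionepbft\phi}_{L^2(\In;\mathcal X)}\lesssim\tfrac{\taun}{\pnt}\Norm{\phi'}_{L^2(\In;\mathcal X)}$ (again a consequence of the integrated-Legendre structure, see \cite{Schoetzau-Schwab:2000}), applied to $\phi=w-\qpnt$, converts the first resulting term $\Norm{(w-\qpnt)-\Pionepbft(w-\qpnt)}_{L^2(\In;\mathcal X)}$ into $\tfrac{\taun}{\pnt}\Norm{(w-\qpnt)'}_{L^2(\In;\mathcal X)}$; taking the infimum over $\qpnt\in\Pbb_{\pnt}(\In;\mathcal X)$ gives the first line of \eqref{Pcaltildept-L2-approx-bis}, and inserting a quasi-optimal $hp$-approximant and invoking \cite[Theorem~3.10]{Schoetzau-Schwab:2000} yields the rate in the second line. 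The main obstacle is the recognition, in the second step above, that the discrepancy $\Pcalpbftt w-\Pionepbft w$ is a \emph{single} integrated-Legendre mode: a generic degree-$\pnt$ polynomial vanishing at $\tnmo$ only admits an $\mathcal O(\taun)$ Poincar\'e constant, so without isolating this single mode one would lose a full power of $\pnt$ and obtain the suboptimal $\tfrac{\taun^2}{\pnt}$ in place of $\tfrac{\taun^2}{(\pnt)^2}$.
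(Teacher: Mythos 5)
Your proposal is correct and follows essentially the same route as the paper's proof: both use~\eqref{relation:SS-Ptilde} together with~\eqref{identity:SS} to split $w-\Pcalpbftt w$ into the $H^1$-projection error $w-\Pionepbft w$ plus a single integrated-Legendre mode, bound that mode by combining the sharp endpoint estimates of Sch\"otzau--Schwab with the formula $\int_{-1}^t L_{\pnt-1} = (2\pnt-1)^{-1}(L_{\pnt}-L_{\pnt-2})$ and the Legendre norm asymptotics, and then deduce~\eqref{Pcaltildept-L2-approx-bis} from polynomial reproduction and the properties of~$\Pionepbft$. The only cosmetic difference is that the paper expands $w'$ in a Legendre series and invokes \cite[eq.~(3.5)]{Schoetzau-Schwab:2000} for the coefficient-tail bound, whereas you cite the equivalent endpoint lemmas directly.
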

\begin{proof}
We prove the assertion on the reference interval~$\Ihat=(-1,1)$;
the general bound follows from a scaling argument.

Identity~\eqref{identity:SS} implies
\small{\[
(w - \Pcalpbftt w)'(t,\cdot)
\overset{\eqref{relation:SS-Ptilde}}{=}
(w' - \Pcaltildepbftmo (w'))(t,\cdot)
= (w'-\Pizpbftmo w')(t,\cdot)
        + (w'-\Pizpbftmo w')(1,\cdot) L_{\pnt-1}(t)
        \quad \text{in } \mathcal X.
\]}\normalsize
Since $(w-\Pcalpbftt w)(-1,\cdot)=0$, an integration by parts gives
\[
\begin{split}
(w-\Pcalpbftt w)(t,\cdot)
& = \int_{-1}^t (w-\Pcalpbftt w)'(s,\cdot) ds\\
& = \int_{-1}^t (w'-\Pizpbftmo w')(s,\cdot) ds + (w'-\Pizpbftmo w')(1,\cdot) \int_{-1}^t L_{\pnt-1}(s) ds
  =: T_1 + T_2.
\end{split}
\]
We estimate the two terms on the right-hand side separately.
The properties of the $L^2$ and $H^1$ projectors
imply $\Pizpbftmo w' = (\Pionepbft w)'$.
Therefore, recalling from~\eqref{Pionetpmo} that~$w(-1,\cdot)=\Pionepbft w(-1,\cdot)$,
we write
\[
T_1
= \int_{-1}^t (w'-(\Pionepbft w)')(s,\cdot) ds
= (w-\Pionepbft w)(t,\cdot)
\qquad\qquad \text{in } \mathcal X.
\]
This identity leads to the estimate
on the first term on the right-hand side of~\eqref{Pcaltildept-L2-approx}.

As for the term~$T_2$, we expand~$w'$ with respect to Legendre polynomials:
\[
w'(t,\cdot) = \sum_{j=0}^{+\infty} w_j'(\cdot) L_j(t)
\qquad \text{in } \mathcal X,
\qquad\qquad\qquad w_j' \in \mathcal X.
\]
Standard properties of the $L^2$ projector imply
\[
T_2
=  \left( \sum_{j=\pnt}^{+\infty} w_j'(\cdot) \right) \int_{-1}^t L_{\pnt-1}(s) ds
\qquad\qquad\qquad \text{in } \mathcal X.
\]
From \cite[eq. (3.5)]{Schoetzau-Schwab:2000}, we have
\[
\SemiNorm{\sum_{j=\pnt}^{+\infty} w_j'(\cdot)}
\lesssim (\pnt)^{-\frac12} \Norm{w''(\cdot)}_{0,\Ihat}
\qquad\qquad\qquad \text{in } \mathcal X.
\]
Using that $\int_{-1}^t L_{\pnt-1}(s) ds = (2\pnt-1)^{-1} (L_{\pnt} - L_{\pnt-2})$
and~$\Norm{L_{\pnt}}_{0,\Ihat} \approx ({\pnt})^{-\frac12}$ yields
\[
\Norm{\int_{-1}^{\cdot} L_{\pnt-1}(s)ds}_{0,\Ihat}
\lesssim (\pnt)^{-1} \left( \Norm{L_{\pnt}}_{0,\Ihat} + \Norm{L_{\pnt-2}}_{0,\Ihat} \right)
\lesssim (\pnt)^{-\frac32}.
\]
Collecting the two displays above implies
\[
T_2 \lesssim (\pnt)^{-2} \Norm{w''(\cdot)}_{0,\Ihat}
\qquad\qquad\qquad \text{in } \mathcal X.
\]
This concludes the proof of~\eqref{Pcaltildept-L2-approx}.
As for~\eqref{Pcaltildept-L2-approx-bis},
we add and subtract~$\qpnt$,
\lm{employ~\eqref{Pcaltildept-L2-approx},}
note that~$\Pcalpbftt$ piecewise preserves piecewise polynomials of degree~$\pnt$ in time,
and use the properties of the projector~$\Pionepbft$ as in \cite[Corollary~$3.15$]{Schwab:1998}.
\end{proof}

\subsection{Abstract error analysis} \label{subsection:abstract-analysis}
Let~$\PihEcal: \HozOmega \to \Vh$ denote the elliptic projector
defined as
\begin{equation} \label{elliptic-projector}
a(u-\PihEcal u, \vh) = 0
\qquad\qquad \forall \vh \in  \Vh .
\end{equation}
We have the following stability estimate for the elliptic projector~$\PihEcal$
in~\eqref{elliptic-projector},
which is a consequence of the Aubin-Nitsche technique:
for any nonnegative~$s$ larger than or equal to~$1$,
and possibly equal to~$\infty$,
there exists~$\alpha$ in $(1/2,1]$,
see, e.g., \cite[Corollary~8]{Dauge:1992},
depending on~$\Omega$ such that
\begin{equation} \label{elliptic-projector-L2-stab}
\Norm{\PihEcal v}_{L^s(\In;L^2(\Omega))}
\lesssim \Norm{v}_{L^s(\In;L^2(\Omega))}
        + \frac{\h^\alpha}{\px^\alpha}
        \SemiNorm{v}_{L^s(\In;H^1(\Omega))}
\qquad\qquad \forall n=1,\dots,N.
\end{equation}
Recall that~$\Pcalpbftt$ is given in~\eqref{equivalent-definition-Pcalpt} and let
\[
\utildeh = \Pcalpbftt \PihEcal u .
\]
We analyze a priori estimates of the following error quantity,
which we spit into two contributions:
\begin{equation} \label{error:splitting}
\eh = u-\uh = (u-\utildeh) + (\utildeh - \uh)
    =: \rho + \xih.
\end{equation}
We show an upper bound on a suitable norm of the two terms
on the right-hand side of~\eqref{error:splitting}.
We begin discussing the term~$\xih$.

\begin{lemma}\label{lemma:estimates-xih}
Consider~$m=m(\xih)$, $\mum$, and~$\xih$
as in~\eqref{selecting-m},
\eqref{mun}, and~\eqref{error:splitting}.
Let~$u$ and~$\uh$ be the solutions to~\eqref{weak-problem} and~\eqref{Walkington:method}.
Let~$\uzeroh$ be the elliptic projection of~$\uzero$
and $\uoneh$ be the $L^2$-orthogonal projection of~$\uone$ onto~$\Vh$.
Assume that $\Deltax u$ and  $u''$
belong to $H^{2}(\In;L^2(\Omega))$
and $L^2(\In;H^1_0(\Omega))$ for all~$n=1,\dots,m$.
Recall that their initial conditions are~$\uzero$ and~$\uone$,
and~$\uzeroh$ and~$\uoneh$, respectively.
Then, the following estimate holds true:
\[
\begin{split}
& \mum \left(
    \Norm{\xih'}_{L^\infty(\Im ; L^2(\Omega))}^2
    + \SemiNorm{\xih}_{L^\infty(\Im ; H^1(\Omega))}^2 \right)
 + \frac14 \sum_{n=1}^m \Norm{\jump{\xih'}(\tnmo,\cdot)}_{0,\Omega}^2 \\
& \quad \le \frac12 \SemiNorm{(\uzero-\uzeroh)(\cdot)}_{1,\Omega}^2
           + \Norm{(\uone-\uoneh)(\cdot)}_{0,\Omega}^2
           + 2 \Norm{\uone(\cdot)-(\Pcalpbftt u)'(0,\cdot)}_{0,\Omega}^2 \\
&\quad\quad
        + 2 \Norm{ (\Pcalpbftt (I-\PihEcal )u)'(0,\cdot)}_{0,\Omega}^2 \\
&  \quad\quad
    + 2 \frac{\tm}{\mum} \Big(  \sum_{n=1}^m\Norm{(I-\Pcalpbftt)\Deltax u}_{L^2(\In;L^2(\Omega))}^{2}
    +  \sum_{n=1}^m\Norm{(I-\PihEcal) u''}_{L^2(\In;L^2(\Omega))}^2 \Big).
\end{split}
\]
The first term on the right-hand side above
vanishes if we discretize the initial condition~$\uzero$
with the elliptic projector in~\eqref{elliptic-projector}.
\end{lemma}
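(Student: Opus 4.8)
The plan is to derive an equation governing $\xih = \utildeh - \uh$ and then apply the stability estimate of Theorem~\ref{theorem:stability} to this function. Since $\xih$ lies in the discrete space~$\Xh$ (being the difference of $\utildeh=\Pcalpbftt\PihEcal u$ and $\uh$, both piecewise continuous polynomials in $\Vh$), and it solves a discrete equation of the same form as~\eqref{Walkington:method} but with a modified right-hand side, I can invoke the stability result essentially verbatim. The key is to identify the correct source term and the correct discrete initial conditions for $\xih$.

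First I would write the discrete equation satisfied by $\xih$. Testing~\eqref{Walkington:method} with $\vh\in\Pbb_{\pnt-1}(\In;\Vh)$ and subtracting the analogous identity that $\utildeh$ satisfies, I get on each interval
\begin{equation*}
\int_{\In}\!\big[(\xih'',\vh)_{0,\Omega}+a(\xih,\vh)\big]\,dt
+(\jump{\xih'}(\tnmo,\cdot),\vhplus(\tnmo,\cdot))_{0,\Omega}
= -\int_{\In}\!\big[((\rho)'',\vh)_{0,\Omega}+a(\rho,\vh)\big]\,dt
-(\jump{\rho'}(\tnmo,\cdot),\vhplus(\tnmo,\cdot))_{0,\Omega},
\end{equation*}
where $\rho=u-\utildeh$. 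The plan is to simplify this right-hand side using the defining properties of the two projectors. By the Galerkin orthogonality~\eqref{elliptic-projector} of $\PihEcal$, the elliptic part $a(\rho,\vh)$ reduces to $a((I-\Pcalpbftt)\PihEcal u,\vh)$, which I rewrite via integration by parts in space and the equation $-\Deltax u = f - u''$ so that it becomes an $L^2$ pairing with $(I-\Pcalpbftt)\Deltax u$. Meanwhile, the key property~\eqref{useful-property-Pcalpt} of the integrated Thom\'ee operator converts the term $((u-\Pcalpbftt u)'',\vh)$ together with the jump term $(\jump{\rho'}(\tnmo,\cdot),\vhplus(\tnmo,\cdot))$ into a form where the troublesome second-derivative and jump contributions cancel, leaving only the consistency error $(I-\PihEcal)u''$ and the $(I-\Pcalpbftt)\Deltax u$ term as the effective source.

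Having reduced $\xih$ to a solution of~\eqref{Walkington:method} with source $f_{\xih}:=-(I-\Pcalpbftt)\Deltax u - (I-\PihEcal)u''$ and discrete initial conditions $\xih(0,\cdot)=(\uzero-\uzeroh)+(I-\PihEcal)\uzero$ (which vanishes for the elliptic-projection choice) and $\xih'(0,\cdot)$ determined by the weak initial data, I would then apply Theorem~\ref{theorem:stability} directly to $\xih$, reading off the right-hand side from~\eqref{discrete-stability-estimate}. The $\SemiNorm{\cdot}_{1,\Omega}^2$ and $\Norm{\cdot}_{0,\Omega}^2$ initial terms produce the $\SemiNorm{\uzero-\uzeroh}_{1,\Omega}^2$ and $\Norm{\uone-\uoneh}_{0,\Omega}^2$ contributions, the extra $\Norm{\uone-(\Pcalpbftt u)'(0,\cdot)}_{0,\Omega}^2$ and $\Norm{(\Pcalpbftt(I-\PihEcal)u)'(0,\cdot)}_{0,\Omega}^2$ terms arise from splitting the upwind initial jump $\jump{\xih'}(0,\cdot)$ into its continuous-data and projection-error pieces and applying Young's inequality, and the $\tm/\mum$ factor times the two source norms comes from the $f$-term in~\eqref{discrete-stability-estimate}. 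The main obstacle I anticipate is the bookkeeping in the initial-time term: correctly tracing how $\xih'(0,\cdot)=\jump{\xih'}(0,\cdot)$ decomposes, accounting for the mismatch between $\uoneh$ (the imposed discrete initial velocity) and $(\Pcalpbftt\PihEcal u)'(0,\cdot)$, and grouping these so that the factors of $2$ and the separate $\Norm{\uone-(\Pcalpbftt u)'(0,\cdot)}_{0,\Omega}^2$ versus $\Norm{(\Pcalpbftt(I-\PihEcal)u)'(0,\cdot)}_{0,\Omega}^2$ terms emerge cleanly. Verifying that the source term truly collapses to exactly these two consistency errors—and no stray jump contributions survive—via~\eqref{useful-property-Pcalpt} is the computational crux.
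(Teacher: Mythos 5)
Your proposal is correct and follows the paper's proof essentially verbatim: derive the discrete error equation for $\xih$, collapse its right-hand side into the effective source $\fxih = (I-\Pcalpbftt)\Deltax u - (I-\PihEcal)u''$ via the Galerkin orthogonality of~$\PihEcal$ and property~\eqref{useful-property-Pcalpt}, then apply Theorem~\ref{theorem:stability} to~$\xih$ and split the initial-velocity mismatch by the triangle/Young inequality exactly as you describe. Only minor slips: the sign in front of $(I-\Pcalpbftt)\Deltax u$ in your $f_{\xih}$, and in your formula for $\xih(0,\cdot)$ (which should read $(\uzero-\uzeroh)-(I-\PihEcal)\uzero=\PihEcal\uzero-\uzeroh$), are flipped but immaterial since only squared norms enter; and the spatial integration by parts cannot be applied to $a((I-\Pcalpbftt)\PihEcal u,\vh)$ directly (the Laplacian of a finite element function is not in $L^2(\Omega)$) — one must first remove~$\PihEcal$ using the commutativity of the two projectors and orthogonality again, reducing to $a((I-\Pcalpbftt)u,\vh)$ as in the paper's display~\eqref{estimating-1st-term-EE}; invoking the PDE $-\Deltax u = f-u''$ is unnecessary there.
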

\begin{proof}
Let~$\eh$ be as in~\eqref{error:splitting}.
Subtracting~\eqref{weak-problem} and~\eqref{Walkington:method},
for all $n=1,\dots,N$, we arrive at
\[
\int_{\In} [(\eh'',\Wh)_{0,\Omega} + a(\eh, \Wh)]dt
   + (\jump{\eh'}(\tnmo,\cdot), \Wh(\tnmo,\cdot))_{0,\Omega} =0
   \quad \forall \Wh \in \Pbb_{\pnt-1}(\In; \Vh).
\]
Splitting~\eqref{error:splitting} yields
\footnotesize{\begin{equation} \label{error-equation:xih}
\begin{split}
& \int_{\In} [(\xih'',\Wh)_{0,\Omega} + a(\xih,\Wh)]dt
   + \Big(\jump{\xih'}(\tnmo,\cdot), \Wh(\tnmo,\cdot) \Big)_{0,\Omega} \\
& = - \int_{\In} (\rho'',\Wh)_{0,\Omega}dt - \int_{\In} a(\rho, \Wh) dt
   - (\jump{\rho'}(\tnmo,\cdot), \Wh(\tnmo,\cdot))_{0,\Omega}
   \qquad \forall \Wh \in \Pbb_{\pnt-1}(\In; \Vh).
\end{split}
\end{equation}}\normalsize
We rewrite the right-hand side of~\eqref{error-equation:xih}
as $(\fxih,\V)_{L^2(0,T;L^2(\Omega))}$,
i.e., as the right-hand side of method~\eqref{Walkington:method},
for a suitable $\fxih$,
which we fix in~\eqref{fxih},
so as to exploit the stability estimates~\eqref{discrete-stability-estimate}
and deduce the assertion.

We focus on the second term on the right-hand side of~\eqref{error-equation:xih}.
Using the fact that $\Pcalpbftt$ and~$\PihEcal$ commute,
definition~\eqref{elliptic-projector} of~$\PihEcal$,
the fact that~$\Pcalpbftt$ preserves polynomials in time,
and an integration by parts,
we arrive at
\begin{equation} \label{estimating-1st-term-EE}
\begin{split}
& \int_{\In} a(\rho, \Wh) dt
  = \int_{\In} a((I-\Pcalpbftt\PihEcal) u, \Wh)dt \\
& = \int_{\In} a((I-\Pcalpbftt) u,\Wh)dt
  = -(\Deltax (I-\Pcalpbftt) u, \Wh)_{L^2(\In;L^2(\Omega))}.
\end{split}
\end{equation}
As for the first and third
terms on the right-hand side of~\eqref{error-equation:xih},
we use the continuity in time of~$u'$ and~\eqref{useful-property-Pcalpt}, and get
\small{\[
\begin{split}
& (\rho'',\Wh)_{L^2(\In;L^2(\Omega))}
    + \big(\jump{\rho'}(\tnmo,\cdot), \Wh(\tnmo,\cdot)\big)_{0,\Omega} \\
& = (u'',\Wh)_{L^2(\In;L^2(\Omega))}
    + \big(\jump{u'}(\tnmo,\cdot), \Wh(\tnmo,\cdot)\big)_{0,\Omega}\\
&\quad  - ((\Pcalpbftt\PihEcal u)'',\Wh)_{L^2(\In;L^2(\Omega))}
    - \Big(\jump{(\Pcalpbftt\PihEcal u)'}(\tnmo,\cdot), \Wh(\tnmo,\cdot)\Big)_{0,\Omega} \\
& = (u'',\Wh)_{L^2(\In;L^2(\Omega))}
    - ((\Pcalpbftt\PihEcal u)'',\Wh)_{L^2(\In;L^2(\Omega))}
    - \Big(\jump{(\Pcalpbftt\PihEcal u)'}(\tnmo,\cdot), \Wh(\tnmo,\cdot)\Big)_{0,\Omega} \\
& \overset{\eqref{useful-property-Pcalpt}}{=}
    (u'',\Wh)_{L^2(\In;L^2(\Omega))}
    - ((\PihEcal u)'',\Wh)_{L^2(\In;L^2(\Omega))}.
\end{split}
\]}\normalsize
For all $n=1,\dots N$, we deduce
\begin{equation} \label{estimating-2nd-term-EE}
 [(\rho'',\Wh)_{L^2(\In;L^2(\Omega))}
+ (\jump{\rho'}(\tnmo,\cdot), \Wh(\tnmo,\cdot))_{0,\Omega}
 = ( (I-\PihEcal)  u'', \Wh)_{L^2(\In;L^2(\Omega))} .
\end{equation}
We insert~\eqref{estimating-1st-term-EE} and~\eqref{estimating-2nd-term-EE} in~\eqref{error-equation:xih} and arrive at
\begin{equation*}
\begin{split}
& \int_{\In} [(\xih'',\Wh)_{0,\Omega} + a(\xih, \Wh)]dt
   + (\jump{\xih'}(\tnmo,\cdot), \Wh(\tnmo,\cdot))_{0,\Omega} \\
& = ( (I-\Pcalpbftt)\Deltax u  , \Wh)_{L^2(\In;L^2(\Omega))}
    - ( (I-\PihEcal) u'', \Wh)_{L^2(\In;L^2(\Omega))} \\
& = ((I-\Pcalpbftt)\Deltax u
        - (I-\PihEcal) u'', \Wh )_{L^2(\In;L^2(\Omega))}
   \qquad \forall \Wh \in \Pbb_{\pnt-1}(\In; \Vh),
   \quad \forall n=1,\dots N.
\end{split}
\end{equation*}
In words, $\xih$ is the solution to method~\eqref{Walkington:method}
with right-hand side given by
$(\fxih,\V)_{L^2(0,T;L^2(\Omega))}$ with
\begin{equation} \label{fxih}
\fxih := (I-\Pcalpbftt)\Deltax u - (I-\PihEcal)  u'' .
\end{equation}
We are now in a position to apply
the discrete stability estimates~\eqref{discrete-stability-estimate}.
Let~$m=m(\xih)$ be defined in~\eqref{selecting-m}
and get
\[
\begin{split}
& \mum \left(
    \Norm{\xih'}_{L^\infty(\Im; L^2(\Omega))}^2
    + \SemiNorm{\xih}_{L^\infty(\Im; H^1(\Omega))}^2 \right)
 + \frac14  \sum_{n=1}^m
        \Norm{\jump{\xih}(\tn,\cdot)}_{0,\Omega}^2\\
& \le  \frac12 \Big( \SemiNorm{\xih(0,\cdot)}_{1,\Omega}^2
        + \Norm{(\xih^-)'(0,\cdot)}_{0,\Omega}^2 \Big)
        + \frac{\tm}{\mum}
        \Norm{\fxih}_{L^2(0,\tm;L^2(\Omega))}^2
   =: T_1 + T_2. 
\end{split}
\]
We estimate the two terms on the right-hand side.
The triangle inequality implies
\[
T_1
\le \frac12 \left( \SemiNorm{\xih(0,\cdot)}_{1,\Omega}^2
                + \Norm{(\xih^-)'(0,\cdot)}_{0,\Omega}^2 \right)
= \frac12 [T_{1,1}+T_{1,2}].
\]
Using~\eqref{equivalent-definition-Pcalpt}
and the stability of the elliptic projection~$\PihEcal$,
we deduce
\[
T_{1,1}
= \SemiNorm{\xih(0,\cdot)}_{1,\Omega}^2
= \SemiNorm{\uh(0,\cdot)- \PihEcal u(0,\cdot)}_{1,\Omega}^2
\le \SemiNorm{(\uzero - \uzeroh)(\cdot)}_{1,\Omega}^2.
\]
The term~$T_{1,1}$ vanishes if the initial condition
$\uzero$ is discretized with the elliptic projector in~\eqref{elliptic-projector}.
We further have
\small{\[
\begin{split}
T_{1,2}
& = \Norm{(\xih^-)'(0,\cdot)}_{0,\Omega}^2
  = \Norm{(\uh^-)'(0,\cdot) - (\Pcalpbftt \PihEcal u)'(0,\cdot)}_{0,\Omega}^2
  = \Norm{\uoneh(\cdot) - (\Pcalpbftt \PihEcal u)'(0,\cdot)}_{0,\Omega}^2  \\
& \le 2\Norm{(\uone-\uoneh)(\cdot)}_{0,\Omega}^2
      + 2\Norm{\uone(\cdot) - (\Pcalpbftt \PihEcal u)'(0,\cdot)}_{0,\Omega}^2 \\
& \le 2\Norm{(\uone-\uoneh)(\cdot)}_{0,\Omega}^2
      + 4\Norm{\uone(\cdot) - (\Pcalpbftt  u)'(0,\cdot)}_{0,\Omega}^2
  + 4\Norm{ (\Pcalpbftt (I-\PihEcal )u)'(0,\cdot)}_{0,\Omega}^2.
\end{split}
\]}\normalsize
Next, we deal with the term~$T_2$:
\small{\[
T_2
= \frac{\tm}{\mum}
        \Norm{\fxih}_{L^2(0,\tm;L^2(\Omega))}^2
=: \frac{\tm}{\mum}
    \sum_{n=1}^m
    \Norm{(I-\Pcalpbftt)\Deltax u - (I-\PihEcal)  u''}
    _{L^2(\In;L^2(\Omega))}^2
:= \frac{\tm}{\mum}
    \sum_{n=1}^m T_{2,n}.
\]}\normalsize
We have
\[
T_{2,n}
\le 2 \Norm{ (I-\Pcalpbftt)\Deltax u }_{L^2(\In; L^2(\Omega))}^2
     + 2 \Norm{(I-\PihEcal) u''}_{L^2(\In; L^2(\Omega))}^2.
\]
Collecting the bounds on~$T_1$ and~$T_2$ yields the assertion.
\end{proof}

Introduce~$k$ in $1,\dots,N$ such that
\begin{equation} \label{selecting-k}
\left( \Norm{\eh'}_{L^\infty(\Ik; L^2(\Omega))}^2
    + \SemiNorm{\eh}_{L^\infty(\Ik; H^1(\Omega))}^2 \right)
:=\max_{n=1}^N \left( \Norm{\eh'}_{L^\infty(\In; L^2(\Omega))}^2
    + \SemiNorm{\eh}_{L^\infty(\In; H^1(\Omega))}^2 \right) .
\end{equation}
The index~$k$ is defined similarly to
the index~$m$ in~\eqref{selecting-m}.
However, on the one hand, with~$k$ we maximize
positive functionals in Bochner spaces
and not in space--time finite element spaces;
on the other hand, using a different nomenclature of
the indices allows us to improve the readability
of the estimates in Theorems~\ref{theorem:abstract-convergence}
and~\ref{theorem:h-p-convergence} below.

\begin{theorem} \label{theorem:abstract-convergence}
Consider~$m=m(\xih)$, $\mum$, $k$, and~$\xih$ and~$\rho$
as in~\eqref{selecting-m}, \eqref{mun}, \eqref{selecting-k},
and~\eqref{error:splitting}.
Let~$u$ and~$\uh$ be the solutions to~\eqref{weak-problem} and~\eqref{Walkington:method}.
Assume that $\Deltax u$ and  $u''$
belong to $H^{2}(\In;L^2(\Omega))$
and $L^2(\In;H^1_0(\Omega))$ for all~$n=1,\dots,m$, respectively.
Recall that their initial conditions are~$\uzero$ and~$\uone$,
and~$\uzeroh$ and~$\uoneh$, respectively.
Then, the following estimate holds true:
\begin{equation}\label{abstract-estimates:maximum:norm}
\begin{split}
& \max_{n=1}^N \left(
    \Norm{\eh'}_{L^\infty(\In; L^2(\Omega))}^2
+ \SemiNorm{\eh}_{L^\infty(\In; H^1(\Omega))}^2 \right)  
    \le \frac{1}{\mum} \Big(\SemiNorm{(\uzero-\uzeroh)(\cdot)}_{1,\Omega}^2 \\
&   \qquad + 2 \Norm{(\uone-\uoneh)(\cdot)}_{0,\Omega}^2
    + 4 \Norm{\uone(\cdot)-(\Pcalpbftt u)'(0,\cdot)}_{0,\Omega}^2
    + 4 \Norm{ (\Pcalpbftt (I-\PihEcal )u)'(0,\cdot)}_{0,\Omega}^2 \Big) \\
& \quad\quad
    + \frac{4 \tm}{\mum^2} \sum_{n=1}^m
     \Norm{(I-\Pcalpbftt)\Deltax u}_{L^2(\In;L^2(\Omega))}^2
    + \frac{4\tm}{\mum^2} \sum_{n=1}^m
     \Norm{(I-\PihEcal) u''}_{L^2(\In;L^2(\Omega))}^2\\
& \quad \quad
    + 2 \Norm{\rho'}_{L^\infty(\Ik; L^2(\Omega))}^2
    + 2 \SemiNorm{\rho}_{L^\infty(\Ik; H^1(\Omega))}^2 =: \sum_{j=1}^{8} \gimel_j^2.
\end{split}
\end{equation}
\end{theorem}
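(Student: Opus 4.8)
The plan is to deduce the bound entirely from the single-interval estimate on~$\xih$ in Lemma~\ref{lemma:estimates-xih}, using the splitting $\eh=\rho+\xih$ in~\eqref{error:splitting} together with an elementary triangle inequality; no new stability argument is needed. First I would invoke the definition~\eqref{selecting-k} of the index~$k$ to replace the maximum on the left-hand side of~\eqref{abstract-estimates:maximum:norm} by the single quantity $\Norm{\eh'}_{L^\infty(\Ik;L^2(\Omega))}^2+\SemiNorm{\eh}_{L^\infty(\Ik;H^1(\Omega))}^2$. Writing $\eh=\rho+\xih$ and applying $(a+b)^2\le 2a^2+2b^2$ to both the time-derivative $L^2$ norm and the $H^1$ seminorm over~$\Ik$, I would bound this by
\[
2\Big(\Norm{\xih'}_{L^\infty(\Ik;L^2(\Omega))}^2+\SemiNorm{\xih}_{L^\infty(\Ik;H^1(\Omega))}^2\Big)
+2\Big(\Norm{\rho'}_{L^\infty(\Ik;L^2(\Omega))}^2+\SemiNorm{\rho}_{L^\infty(\Ik;H^1(\Omega))}^2\Big).
\]
The second group is precisely $\gimel_7^2+\gimel_8^2$, so those two terms are already accounted for and only the $\xih$-contribution remains to be estimated.

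Next I would pass from the interval~$\Ik$ to the interval~$\Im$ with $m=m(\xih)$. Since, by~\eqref{selecting-m} applied to $g=\xih$, the index~$m$ maximizes exactly the functional $\Norm{\xih'}_{L^\infty(\cdot;L^2(\Omega))}^2+\SemiNorm{\xih}_{L^\infty(\cdot;H^1(\Omega))}^2$ over all time intervals, one has
\[
\Norm{\xih'}_{L^\infty(\Ik;L^2(\Omega))}^2+\SemiNorm{\xih}_{L^\infty(\Ik;H^1(\Omega))}^2
\le \Norm{\xih'}_{L^\infty(\Im;L^2(\Omega))}^2+\SemiNorm{\xih}_{L^\infty(\Im;H^1(\Omega))}^2,
\]
irrespective of whether $k=m$. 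I would then apply Lemma~\ref{lemma:estimates-xih}, discard the nonnegative jump term $\tfrac14\sum_{n=1}^m\Norm{\jump{\xih'}(\tnmo,\cdot)}_{0,\Omega}^2$ appearing on its left-hand side, and divide through by~$\mum$; this controls the right-hand side above by $\mum^{-1}$ times the four initial-data terms plus $2\tm\mum^{-2}$ times the two consistency sums $\sum_n\Norm{(I-\Pcalpbftt)\Deltax u}_{L^2(\In;L^2(\Omega))}^2$ and $\sum_n\Norm{(I-\PihEcal)u''}_{L^2(\In;L^2(\Omega))}^2$. Multiplying by the factor~$2$ from the triangle inequality turns the coefficients $\tfrac12,1,2,2$ of the initial-data terms into $1,2,4,4$, matching $\gimel_1^2,\dots,\gimel_4^2$, and turns $2\tm\mum^{-2}$ into $4\tm\mum^{-2}$, matching $\gimel_5^2$ and $\gimel_6^2$. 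Collecting with the $\rho$-terms already identified gives $\sum_{j=1}^8\gimel_j^2$, which is the assertion.

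The proof is essentially bookkeeping, so there is no serious analytic obstacle; the one point that requires care is the handling of the two distinct maximizing indices. The left-hand side is governed by~$k$, which maximizes the norm of the \emph{full} error~$\eh$ in Bochner space, whereas the available estimate is anchored at $m=m(\xih)$, which maximizes the norm of the \emph{finite-element} correction~$\xih$ in the space--time discrete space. The monotonicity step displayed above is exactly what decouples these two choices, allowing the interval of evaluation to be moved from~$\Ik$ to~$\Im$ without any comparison between~$k$ and~$m$ and without any CFL-type restriction; one must only be careful not to conflate the two indices when tracking constants, since all the $\mum$-dependent factors in $\gimel_1^2,\dots,\gimel_6^2$ carry the index~$m$ and not~$k$.
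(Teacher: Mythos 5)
Your proposal is correct and follows essentially the same route as the paper's proof: reduce the maximum to the interval~$\Ik$, split $\eh=\rho+\xih$ by the triangle inequality, pass from~$\Ik$ to~$\Im$ via the maximality of $m=m(\xih)$ in~\eqref{selecting-m}, and conclude with Lemma~\ref{lemma:estimates-xih}. Your explicit bookkeeping of the coefficients $1,2,4,4$ and $4\tm/\mum^2$, and your remark on keeping the indices~$k$ and~$m$ decoupled, simply spell out what the paper leaves implicit in its final sentence.
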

\begin{proof}
Using the triangle inequality and the definition
of~$m=m(\xih)$ in~\eqref{selecting-m}, we deduce
\[
\begin{split}
&
    \max_{n=1}^N \left(
    \Norm{\eh'}_{L^\infty(\In; L^2(\Omega))}^2
    + \SemiNorm{\eh}_{L^\infty(\In; H^1(\Omega))}^2 \right)\\
 &   \leq 2 \left(
    \Norm{\rho'}_{L^\infty(\Ik; L^2(\Omega))}^2
    + \SemiNorm{\rho}_{L^\infty(\Ik; H^1(\Omega))}^2 \right)
    +2 \left(
    \Norm{\xih'}_{L^\infty(\Ik; L^2(\Omega))}^2
    + \SemiNorm{\xih}_{L^\infty(\Ik; H^1(\Omega))}^2 \right) \\
    &   \leq 2 \left(
    \Norm{\rho'}_{L^\infty(\Ik; L^2(\Omega))}^2
    + \SemiNorm{\rho}_{L^\infty(\Ik; H^1(\Omega))}^2 \right)
    +2 \left(
    \Norm{\xih'}_{L^\infty(\Im; L^2(\Omega))}^2
    + \SemiNorm{\xih}_{L^\infty(\Im; H^1(\Omega))}^2 \right) . 
\end{split}
\]
The assertion follows using Lemma~\ref{lemma:estimates-xih}.
\end{proof}

\subsection{Error estimates} \label{subsection:h-p-convergence}
For~$m=m(\xih)$ and~$\mum$ as in~\eqref{selecting-m} and~\eqref{mun},
we derive error estimates for method~\eqref{Walkington:method},
which are explicit in the spatial mesh size, the time steps,
and the polynomial degrees in space and time,
with respect to the norm
\begin{equation} \label{full-error-a-priori}
\max_{n=1}^N\left( \Norm{\cdot'}_{L^\infty(\In; L^2(\Omega))}^2
+ \SemiNorm{\cdot}_{L^\infty(\In; H^1(\Omega))}^2 \right).
\end{equation}
To this aim, we give explicit bounds on the terms~$\gimel_j$,
$j=1,\dots,8$, appearing on the right-hand side
of~\eqref{abstract-estimates:maximum:norm},
and collect the resulting estimates in the following result.

\begin{theorem} \label{theorem:h-p-convergence}
Let~$u$ and~$\uh$ be the solutions to~\eqref{weak-problem} and~\eqref{Walkington:method},
and~$\eh$ be as in~\eqref{error:splitting};
$\uzero$, $\uone$, and~$u$ be sufficiently smooth;
$m=m(\xih)$, $\mum$, and~$k$ be
as in~\eqref{selecting-m}, \eqref{mun}, and~\eqref{selecting-k};
$\alpha$ be the elliptic regularity parameter as in~\eqref{elliptic-projector-L2-stab}.
Then, the following a priori estimate holds true:
\[
\begin{split}
&  \max_{n=1}^N \left(
    \Norm{\eh'}_{L^\infty(\In; L^2(\Omega))}^2
    + \SemiNorm{\eh}_{L^\infty(\In; H^1(\Omega))}^2 \right)
    \lesssim 
   \daleth_1+\daleth_2+\daleth_3 ,
\end{split}
\]
where we have set
\small\[
\begin{split}
\daleth_1
& := \frac{1}{\mum} \Big(\frac{\h^{2\min{(\px,s-\frac12)}}}{\px^{2s-1}}
    \Norm{\uzero(\cdot)}_{s+\frac12,\Omega}^2\\
& \quad + \frac{\h^{2\min{(\px+1,s-\frac12)}}}{\px^{2s-1}}
        \Norm{\uone(\cdot)}_{s-\frac12,\Omega}^2
        + \Big( \frac{\tauo}{\pot}\Big)^{2\min(\pot,s)-1}
        \Norm{u^{(s+1)}}_{L^2(I_1, L^2(\Omega))}^2 \\
& \quad + \Big(\frac{\tauo}{\pot}\Big)
  \frac{\h^{2\min(\px+1,s-1)-2(1-\alpha)}}{\px^{2s-2-2(1-\alpha)}}
    \Norm{u''}_{L^2(I_1;H^{s-1}(\Omega))} ^2
+ \frac{\h^{2\min(\px+1,s-\frac12)-2(1-\alpha)}}{\px^{2s-1-2(1-\alpha)}}
    \Norm{u_1(\cdot)}_{s-\frac12,\Omega}^2 \Big), \\
\daleth_2 
&:= \frac{\tm}{\mum^2} \sum_{n=1}^m
    \Big( \frac{\taun}{\pnt} \Big)^{2\min(\pnt+1, s-1)}
    \Norm{\Deltax u^{(s-1)}}_{L^{2}(\In; L^2(\Omega))}^2 \\
& \quad + \frac{\tm}{\mum^2} \sum_{n=1}^m
    \frac{\h^{2\min(\px+1,s-1)-2(1-\alpha)}}{\px^{2s-2-2(1-\alpha)}}
    \Norm{u''}_{L^2(\In;H^{s-1}(\Omega))}^2,\\
\end{split}
\]\normalsize
and
\small\[
\begin{split}
\daleth_3
&:= \Big(  \frac{\h^{2\min(\pkt,s)+2}\tauk}{(\pkt)^{2s+2}}
    \Big(1 + \frac{\h^{-2+2\alpha}}{\px^{-2+2\alpha}} \Big)
    \big(\tauk^{-1} \Norm{u'}_{L^{\infty}(\Ik;H^{s+1}(\Omega))}^2 \big)    \Big. \\
& \qquad \Big. + \frac{\tauk^{2\min(\pkt,s)-1}}{(\pkt)^{2\min(\pkt,s)-2}}
        \Big( \Norm{u^{(s+1)}}_{L^{2}(\Ik;L^2(\Omega))}^2
        +  \frac{\h^{2\alpha-2}}{\px^{2\alpha}} \h^{2}
        \Norm{u^{(s+1)}}_{L^{2}(\Ik;H^1(\Omega))}^2 \Big) \Big)  \\
& \quad + \Big( \frac{\h^{2\min(\px,s)}\tauk^{-1}}{\px^{2s}}
    \big(\tauk \Norm{u}_{L^\infty(\Ik;H^{s+1}(\Omega))^2}\big)
    + \Big( \frac{\tauk}{\pkt}\Big)^{2\min(\pkt+1,s)-1}
    \Norm{u^{(s)}}_{L^{2}(\Ik,H^1(\Omega))}^2 \Big).
\end{split}
\]\normalsize
In particular,
focusing on the $\h-$ and~$\tau-$versions of the scheme,
quasi-uniform time-steps comparable to $\tau$,
and sufficiently smooth~$u$, there holds
\footnotesize\[
\begin{split}
&  \max_{n=1}^N \left(
    \Norm{\eh'}_{L^\infty(\In; L^2(\Omega))}^2
    + \SemiNorm{\eh}_{L^\infty(\In; H^1(\Omega))}^2 \right)
    \lesssim 
    \big(\h^{2\p} \Norm{\uzero(\cdot)}_{\p+1,\Omega}^2
    + \h^{2(\p+1)} \Norm{\uone(\cdot)}_{\p+1,\Omega}^2 \\
&   \qquad\qquad    + \tau^{2\p-1} \Norm{u^{(\p+1)}}_{L^2(I_1, L^2(\Omega))}^2
    + \tau \h^{2(\p+1)} \Norm{u''}_{L^2(I_1;H^{\p+1}(\Omega))}^2 \big)  \\
&   \quad + \big( \tm \tau^{2(\p+1)}
    \Norm{\Deltax u^{(\p+1)}}_{L^{2}(0,\tm; L^2(\Omega))}^2 
    + \tm \h^{2(\p+1)} \Norm{u''}_{L^2(0,\tm;H^{\p+1}(\Omega))}^2 \big) \\
&   \quad + \big(  \h^{2(\p+1)}
        \Norm{u'}_{L^{\infty}(\Ik;H^{\p+1}(\Omega))}^2
    + \tau^{2\p-1} \Norm{u^{(\p+1)}}_{L^{2}(\Ik;L^2(\Omega))}^2
        +  \h^{2} \tau^{2\p-1} \Norm{u^{(\p+1)}}_{L^{2}(\Ik;H^1(\Omega))}^2 \\
&   \quad\quad + \h^{2\p} \Norm{u}_{L^\infty(\Ik;H^{\p+1}(\Omega))}^2
    + \tau^{2\p+1}  \Norm{u^{(\p+1)}}_{L^{2}(\Ik,H^1(\Omega))}^2 \big).
\end{split}
\]\normalsize

\end{theorem}
\begin{proof}
\noindent \textbf{Estimates on~$\gimel_1$.}
We recall that~$\uzeroh$ is the elliptic projection of~$\uzero$
as in~\eqref{elliptic-projector} onto~$\Vh$.
Standard polynomial approximation estimates imply 
\begin{equation} \label{estimate-gimel1}
\mum\gimel_1^2
= {\SemiNorm{(\uzero-\uzeroh)(\cdot)}_{1,\Omega}^2}
\lesssim {\frac{\h^{2\min{(\px,s-\frac12)}}}{\px^{2s-1}}
    \Norm{\uzero(\cdot)}_{s+\frac12,\Omega}^2}.
\end{equation}

\noindent \textbf{Estimates on~$\gimel_2$.}
We recall that~$\uoneh$ is the $L^2$ projection of~$\uone$ onto~$\Vh$.
Standard polynomial approximation estimates imply
\begin{equation} \label{estimate-gimel2}
\frac{\mum}{2} \gimel_2^2
= \Norm{(\uone-\uoneh)(\cdot)}_{0,\Omega}^2
\lesssim {\frac{\h^{2\min{(\px+1,s-\frac12)}}}{\px^{2s-1}}
        \Norm{\uone(\cdot)}_{s-\frac12,\Omega}^2}.
\end{equation}

\noindent \textbf{Estimates on~$\gimel_3$.}
Using identity~\eqref{relation:SS-Ptilde}
and the approximation estimate~\eqref{Schoetzau-Schwab-d},
we infer 
\footnotesize{
\begin{equation} \label{estimate-gimel3}
\frac{\mum}{4} \gimel_3^2
=  \Norm{(u - \Pcalpbftt u)'(0,\cdot)}_{0,\Omega}^2
=  \Norm{(u' - \Pcaltildepbftmo u')(0,\cdot)}_{0,\Omega}^2
\lesssim \left( \frac{\tauo}{\pot}\right)
        ^{2\min(\pot,s)-1}
        \Norm{u^{(s+1)}}_{L^2(I_1, L^2(\Omega))}^2.
\end{equation} 
}\normalsize

\medskip
\noindent \textbf{Estimates on~$\gimel_4$.}
Using the triangle inequality, we have
\footnotesize{\[
\frac{\mum}{4} \gimel_4^2
=   \Norm{ (\Pcalpbftt (I-\PihEcal )u)'(0,\cdot)}_{0,\Omega}^2
\le 2 \Norm{ ((I - \Pcalpbftt) (I-\PihEcal )u)'(0,\cdot)}_{0,\Omega}^2
    + 2 \Norm{ (I-\PihEcal )u'(0,\cdot)}_{0,\Omega}^2 =: \gimel_{4,1}^2 + \gimel_{4,2}^2.
\]}\normalsize
Following the proof of the bound on~$\gimel_{3}^2$
we get the following bound on~$\gimel_{4,1}^2$:
\[
\begin{split}
 \gimel_{4,1}^2
& \overset{\eqref{Schoetzau-Schwab-d}}{\lesssim}
\left(\frac{\tauo}{\pot}\right)
    \Norm{ (I-\PihEcal )u''}_{L^2(I_1;L^2(\Omega))}^2\\
&\lesssim \left(\frac{\tauo}{\pot}\right)
\inf_{\qpx \in L^2(I_1;\Vh)}
    \left( \Norm{u''-\qpx}_{L^2(I_1;L^2(\Omega))}^2
    + \Norm{\PihEcal (u''-\qpx)}_{L^2(I_1;L^2(\Omega))}^2 \right) \\
& \overset{\eqref{elliptic-projector-L2-stab}}{\lesssim}
       \left(\frac{\tauo}{\pot}\right)
\inf_{\qpx \in L^2(I_1;\Vh)}
    \left( \Norm{u''-\qpx}_{L^2(I_1;L^2(\Omega))}^2
    + \frac{\h^{2\alpha}}{\px^{2\alpha}}
    {\SemiNorm{u''-\qpx}_{L^2(I_1;H^1(\Omega))}^2} \right) \\
&  \lesssim \left(\frac{\tauo}{\pot}\right)
\left( \frac{\h^{2\min(\px+1,s-1)}}{\px^{2(s-1)}}
    \Norm{u''}_{L^2(I_1;H^{s-1}(\Omega))}^2
    + \frac{\h^{2\alpha}}{\px^{2\alpha}}
    \frac{\h^{2\min(\px+1,s-1)-2}}{\px^{2(s-1)-2}}
    \Norm{u''}_{L^2(I_1;H^{s-1}(\Omega))}^2 \right) \\
&  \lesssim \left(\frac{\tauo}{\pot}\right)
  \frac{\h^{2\min(\px+1,s-1)-2(1-\alpha)}}{\px^{2s-2-2(1-\alpha)}}
    \Norm{u''}_{L^2(I_1;H^{s-1}(\Omega))} ^2.
\end{split}
\]
As for the term~$\gimel_{4,2}$, we proceed similarly:
\small{\[
\begin{split}
 \gimel_{4,2}^2
& =  \Norm{ (I-\PihEcal )u_1(\cdot) }_{0,\Omega}^2
 \lesssim \inf_{\qpx \in L^2(I_1;\Vh)}
    \left( \Norm{(u_1-\qpx)(\cdot)}_{0,\Omega}^2
    + \Norm{\PihEcal (u_1-\qpx)(\cdot)}_{0,\Omega}^2 \right) \\
& \overset{\eqref{elliptic-projector-L2-stab}}{\lesssim}
\left( \frac{\h^{2\min(\px+1,s-\frac12)}}{\px^{2s-1}}
    \Norm{u_1(\cdot)}_{s-\frac12,\Omega}^2
    + \frac{\h^{2\alpha}}{\px^{2\alpha}}
    \frac{\h^{2\min(\px+1,s-\frac12)-2}}{\px^{2s-3}}
    \Norm{u_1(\cdot)}_{s-\frac12,\Omega}^2 \right) \\
& \lesssim \frac{\h^{2\min(\px+1,s-\frac12)-2(1-\alpha)}}{\px^{2s-1-2(1-\alpha)}}
    \Norm{u_1(\cdot)}_{s-\frac12,\Omega}^2.
\end{split}
\]}
Collecting the two displays above gives
\small{\begin{equation} \label{estimate-gimel4}
\mum \gimel_4^2
\lesssim  \left(\frac{\tauo}{\pot}\right)
  \frac{\h^{2\min(\px+1,s-1)-2(1-\alpha)}}{\px^{2s-2-2(1-\alpha)}}
    \Norm{u''}_{L^2(I_1;H^{s-1}(\Omega))} ^2
+   {\frac{\h^{2\min(\px+1,s-\frac12)-2(1-\alpha)}}{\px^{2s-1-2(1-\alpha)}}
    \Norm{u_1(\cdot)}_{s-\frac12,\Omega}^2}.
\end{equation}}\normalsize

\medskip
\noindent \textbf{Estimates on~$\gimel_5$.}
Using Proposition~\ref{proposition:Pcaltildept-L2-approx},
we get
\small{\begin{equation} \label{estimate-gimel5}
\begin{split}
\frac{\gimel_{5}^2}{4}
& = \frac{\tm}{\mum^2} \sum_{n=1}^m
    \Norm{(I-\Pcalpbftt)\Deltax u}_{L^2(\In;L^2(\Omega))}^2
\lesssim \frac{\tm}{\mum^2} \sum_{n=1}^m
    \left( \frac{\taun}{\pnt}\right)^{2\min(s-1,\pnt+1)}
    \Norm{\Deltax u^{(s-1)}}_{L^{2}(\In; L^2(\Omega))}^2.
\end{split}
\end{equation}}\normalsize

\noindent \textbf{Estimates on~$\gimel_6$.}
Using~\eqref{elliptic-projector-L2-stab}
and the fact that~$\PihEcal$ preserves spatial polynomials
of degree~$\px$, we get
\small{\begin{equation} \label{estimate-gimel6}
\begin{split}
\frac{\gimel_{6}^2}{4}
& = \frac{\tm}{\mum^2} \sum_{n=1}^m
    \Norm{(I-\PihEcal)u''}_{L^2(\In;L^2(\Omega))}^2\\
& \lesssim \frac{\tm}{\mum^2} \sum_{n=1}^m
    \inf_{\qpx \in L^2(\In;\Vh)}
    \left( \Norm{u''-\qpx}_{L^2(\In;L^2(\Omega))}^2
    + \Norm{\PihEcal (u''-\qpx)}_{L^2(\In;L^2(\Omega))}^2 \right)    \\
& \lesssim \frac{\tm}{\mum^2} \sum_{n=1}^m
\inf_{\qpx \in L^2(\In;\Vh)}
    \left( \Norm{u''-\qpx}_{L^2(\In;L^2(\Omega))}^2
    + \frac{\h^{2\alpha}}{\px^{2\alpha}}
    {\SemiNorm{u''-\qpx}_{L^2(\In;H^1(\Omega))}^2} \right) \\
& \lesssim \frac{\tm}{\mum^2} \sum_{n=1}^m
    \left(
    \frac{\h^{2\min(\px+1,s-1)}}{\px^{2(s-1)}}
    \Norm{u''}_{L^2(\In;H^{s-1}(\Omega))}^2
    + \frac{\h^{2\alpha}}{\px^{2\alpha}}
    \frac{\h^{2\min(\px+1,s-1)-2}}{\px^{2(s-1)-2}}
    \Norm{u''}_{L^2(\In;H^{s-1}(\Omega))}^2 \right) \\
& \lesssim \frac{\tm}{\mum^2} \sum_{n=1}^m
    \frac{\h^{2\min(\px+1,s-1)-2(1-\alpha)}}{\px^{2s-2-2(1-\alpha)}}
    \Norm{u''}_{L^2(\In;H^{s-1}(\Omega))}^2.
\end{split}
\end{equation}}\normalsize

\noindent \textbf{Estimates on~$\gimel_7$.}
Using~\eqref{elliptic-projector-L2-stab}
and the definition of~$\rho$ in~\eqref{error:splitting} gives
\footnotesize{\begin{equation} \label{starting:gimel7}
\begin{split}
\frac{\gimel_7^2}{2}
&   =  \Norm{\rho'}^2_{L^\infty(\Ik;L^2(\Omega))}
    =  \Norm{(u-\Pcalpbftt \PihEcal u)'}
        ^2_{L^\infty(\Ik;L^2(\Omega))} \\
&   \lesssim
    \Big( \Norm{(u-\PihEcal u)'}^2_{L^{\infty}(\Ik;L^2(\Omega))}
    + \Norm{(u-\Pcalpbftt u)'}^2_{L^\infty(\Ik; L^2(\Omega))}
    + \left( \frac{\h}{\px} \right)^{2\alpha}
      \SemiNorm{(u-\Pcalpbftt u)'}^2_{L^\infty(\Ik;H^1(\Omega))} \Big).
\end{split}
\end{equation}}\normalsize
First, we focus on the first term on the right-hand side:
for any~$\qpx$ in $W^{1,\infty}(\Ik; \Vh)$,
\[
\begin{split}
& \Norm{(u-\PihEcal u)'}^2_{L^{\infty}(\Ik;L^2(\Omega))}
 \lesssim \Norm{(u-\qpx)'}^2_{L^{\infty}(\Ik;L^2(\Omega))}
        + \Norm{\PihEcal(u-\qpx)'}^2_{L^{\infty}(\Ik;L^2(\Omega))}\\
& \overset{\eqref{elliptic-projector-L2-stab}}{\lesssim}
     \Norm{(u-\qpx)'}^2_{L^{\infty}(\Ik;L^2(\Omega))}
     + \frac{\h^{2\alpha}}{\p^{2\alpha}}
        \SemiNorm{(u-\qpx)'}^2_{L^{\infty}(\Ik;H^1(\Omega))} \\
& \lesssim \frac{\h^{2\min(s,\pkt)+2}\tauk}{(\pkt)^{2s+2}}
         \big(\tauk^{-1} \Norm{u'}_{L^{\infty}(\Ik;H^{s+1}(\Omega))}^2\big)
         + \frac{\h^{2\min(s,\pkt)+2\alpha}\tauk}{(\pkt)^{2s+2\alpha}}
         \big(\tauk^{-1} \Norm{u'}_{L^{\infty}(\Ik;H^{s+1}(\Omega))}^2\big) \\
& = \frac{\h^{2\min(s,\pkt)+2}\tauk}{(\pkt)^{2s+2}}
         \left(1 + \frac{\h^{-2+2\alpha}}{\px^{-2+2\alpha}} \right)
         \big(\tauk^{-1} \Norm{u'}_{L^{\infty}(\Ik;H^{s+1}(\Omega))}^2\big).
\end{split}
\]
Next, we deal with the second term in the parenthesis on the right-hand side:
using a polynomial inverse inequality as in~\cite[eq. (3.6.4)]{Schwab:1998},
for any~$\qpkt$ in~$\Pbb_{\pkt}(\Ik;L^2(\Omega))$,
\[
\begin{split}
& \Norm{(u-\Pcalpbftt u)'}^2_{L^\infty(\Ik;L^2(\Omega))}
  \lesssim \Norm{(u-\qpnt)'}^2_{L^\infty(\Ik;L^2(\Omega))}
            + \Norm{\Pcalpbftt'(u-\qpkt)}^2_{L^\infty(\Ik;L^2(\Omega))} \\
& \lesssim \Norm{(u-\qpkt)'}^2_{L^\infty(\Ik;L^2(\Omega))}
            + \frac{(\pkt)^2}{\tauk}
            \Norm{\Pcalpbftt'(u-\qpkt)}_{L^2(\Ik;L^2(\Omega))}^2 \\
& \overset{\eqref{Schoetzau-Schwab-derivatives-c}}{\lesssim}
 \Norm{(u-\qpkt)'}^2_{L^\infty(\Ik;L^2(\Omega))}
 + \frac{(\pkt)^2}{\tauk}  \Norm{(u-\qpkt)'}^2_{L^2(\Ik;L^2(\Omega))}
 + \tauk \Norm{(u-\qpkt)''}_{L^2(\Ik;L^2(\Omega))}^2.
\end{split}
\]
Standard polynomial approximation properties in 1D give
\[
\Norm{(u-\qpkt)'}^2_{L^\infty(\Ik;L^2(\Omega))}
\lesssim \frac{\tauk^{2\min(s,\pkt)-1}}{(\pkt)^{2\min(s,\pkt)-1}}
        \Norm{u^{(s+1)}}_{L^{2}(\Ik;L^2(\Omega))}^2,
\]
\[
\frac{(\pkt)^2}{\tauk} \Norm{(u-\qpkt)'}^2_{L^2(\Ik;L^2(\Omega))}
\lesssim  \frac{\tauk^{2\min(s,\pkt)-1}}{(\pkt)^{2\min(s,\pkt)-2}}
        \Norm{u^{(s+1)}}_{L^{2}(\Ik;L^2(\Omega))}^2,
\]
and
\[
\tauk \Norm{(u-\qpkt)''}^2_{L^2(\Ik;L^2(\Omega))}
\lesssim  \frac{\tauk^{2\min(s,\pkt)-1}}{(\pkt)^{2\min(s,\pkt)-2}}
        \Norm{u^{(s+1)}}_{L^{2}(\Ik;L^2(\Omega))}^2.
\]
We collect the four displays above and get
\[
\Norm{(u-\Pcalpbftt u)'}^2_{L^\infty(\Ik;L^2(\Omega))}
  \lesssim \frac{\tauk^{2\min(s,\pkt)-1}}{(\pkt)^{2\min(s,\pkt)-2}}
        \Norm{u^{(s+1)}}_{L^{2}(\Ik;L^2(\Omega))}^2.
\]
We proceed similarly for the third term on the right-hand side of~\eqref{starting:gimel7}:
\[
\begin{split}
\left(\frac{\h}{\px} \right)^{2\alpha}
               \SemiNorm{(u-\Pcalpbftt u)'}^2_{L^\infty(\Ik;H^1(\Omega))}
& \lesssim \left(\frac{\h}{\px} \right)^{2\alpha}
        \frac{\tauk^{2\min(s,\pkt)-1}}{(\pkt)^{2\min(s,\pkt)-2}}
        \Norm{u^{(s+1)}}_{L^{2}(\Ik;H^1(\Omega))}^2 \\
& = \frac{\h^{2\alpha-2}}{\px^{2\alpha}}
        \frac{\tauk^{2\min(s,\pkt)-1}}{(\pkt)^{2\min(s,\pkt)-2}}
        \h^{2}\Norm{u^{(s+1)}}_{L^{2}(\Ik;H^1(\Omega))}^2.
\end{split}
\]
Recalling~\eqref{mun}, we arrive at
\begin{equation}\label{estimate-gimel7}
\begin{split}
\gimel_7^2
& \lesssim
    \left[  \frac{\h^{2\min(s,\pkt)+2}\tauk}{(\pkt)^{2s+2}}
         \left(1 + \frac{\h^{-2+2\alpha}}{\px^{-2+2\alpha}} \right)
         \tauk^{-1}\Norm{u}_{W^{1,\infty}(\Ik;H^{s+1}(\Omega))}^2
         \right. \\
& \qquad\qquad \left. + \frac{\tauk^{2\min(s,\pkt)-1}}{(\pkt)^{2\min(s,\pkt)-2}}
        \left( \Norm{u^{(s+1)}}_{L^{2}(\Ik;L^2(\Omega))}^2
        +  \frac{\h^{2\alpha-2}}{\px^{2\alpha}} \h^{2}
        \Norm{u^{(s+1)}}_{L^{2}(\Ik;H^1(\Omega))}^2 \right) \right].
\end{split}
\end{equation}

\noindent \textbf{Estimates on~$\gimel_8$.}
Using the stability of the elliptic projector~$\PihEcal$ in $H^1(\Omega)$
and the 1D Sobolev embedding in \cite[eq. (1.3)]{Ilyin-Laptev-Loss-Zelik:2016},
we write
\begin{equation} \label{starting:gimel8}
\begin{split}
\frac{\gimel_8^2}{2}
&   =  {\SemiNorm{\rho}_{L^\infty(\Ik;H^1(\Omega))}^2}
    =  {\SemiNorm{u-\Pcalpbftt\PihEcal u}_{L^\infty(\Ik;H^1(\Omega))}^2} \\
& \lesssim
    \Big( {\SemiNorm{u- \PihEcal u}_{L^\infty(\Ik;H^1(\Omega))}^2}
    + {\SemiNorm{\PihEcal(\text{Id}-\Pcalpbftt) u}_{L^\infty(\Ik;H^1(\Omega))}^2} \Big) \\
&   \le \Big( {\SemiNorm{u- \PihEcal u}_{L^\infty(\Ik;H^1(\Omega))}^2}
        + {\SemiNorm{u- \Pcalpbftt u}_{L^\infty(\Ik;H^1(\Omega))}^2} \Big) \\
&   \le \Big( \SemiNorm{u- \PihEcal u}_{L^\infty(\Ik;H^1(\Omega))}^2
        + \SemiNorm{u- \Pcalpbftt u}_{L^2(\Ik;H^1(\Omega))}
        \SemiNorm{(u- \Pcalpbftt u)'}_{L^2(\Ik;H^1(\Omega))} \Big).
\end{split}
\end{equation}
As for the first term on the right-hand side,
polynomial approximation properties (in space) give
\[
\SemiNorm{u- \PihEcal u}_{L^\infty(\Ik;H^1(\Omega))}^2
\le \SemiNorm{u- \qpx}_{L^\infty(\Ik;H^1(\Omega))}^2
\lesssim \frac{\h^{2\min(\px,s)}\tauk^{-1}}{\px^{2s}}
    \big(\tauk \Norm{u}_{L^\infty(\Ik;H^{s+1}(\Omega))}^2 \big).
\]
As for the second term on the right-hand side of~\eqref{starting:gimel8},
we use~\eqref{Pcaltildept-L2-approx-bis}
and~\eqref{Schoetzau-Schwab-derivatives-b}, and obtain
\[
\begin{split}
\SemiNorm{u- \Pcalpbftt u}_{L^2(\Ik;H^1(\Omega))}
    \SemiNorm{(u- \Pcalpbftt u)'}_{L^2(\Ik;H^1(\Omega))}
\lesssim \left( \frac{\tauk}{\pkt}\right)^{2\min(s,\pkt+1)-1}
            \Norm{u^{(s)}}_{L^{2}(\Ik,H^1(\Omega))}^2.
\end{split}
\]
Combining the above displays
and recalling~\eqref{mun} entail
\small{\begin{equation} \label{estimate-gimel8}
\gimel_8^2
\lesssim
\left( \frac{\h^{2\min(\px,s)}\tauk^{-1}}{\px^{2s}}
    \big(\tauk\Norm{u}_{L^\infty(\Ik;H^{s+1}(\Omega))}^2 \big)
    + \left( \frac{\tauk}{\pkt}\right)^{2\min(s,\pkt+1)-1}
    \Norm{u^{(s)}}_{L^{2}(\Ik,H^1(\Omega))}^2 \right).
\end{equation}}\normalsize

\noindent \textbf{error estimates.}
The assertion follows combining \eqref{estimate-gimel1}, \eqref{estimate-gimel2},
\eqref{estimate-gimel3}, \eqref{estimate-gimel4}, \eqref{estimate-gimel5},
\eqref{estimate-gimel6}, \eqref{estimate-gimel7},  and \eqref{estimate-gimel8}.
\end{proof}

The a priori estimates in Theorem~\ref{theorem:h-p-convergence} can be simplified
under the assumption of elliptic regularity,
i.e., assuming the parameter~$\alpha$
in~\eqref{elliptic-projector-L2-stab} to be~$1$
(this happens for instance if~$\Omega$ is convex);
under more regularity on the solution;
requiring the isotropy of the spatial and time meshes;
fixing the polynomial degrees in space and time
(in what follows, $\p$ denotes the polynomial degree of the scheme).

Notably, the next result contains two estimates:
the first one holds true for smooth exact solutions and
the rate in the time step has optimal order~$\p$;
the second one holds true for exact solutions
with finite total Sobolev regularity indices~$s$
smaller than or equal to $\p$
and is optimal in terms of the Sobolev scaling.

\begin{corollary} \label{corollary:h-p-convergence-alpha=1}
Let~$u$ and~$\uh$ be the solutions to~\eqref{weak-problem} and~\eqref{Walkington:method},
and~$\eh$ be as in~\eqref{error:splitting}.
We assume that the initial conditions $\uzero$ and~$\uone$ in~\eqref{weak-problem}
and $u$ are smooth
in the sense $s$ is larger than or equal to
$\pnt +2$ on each $I_n$ for all $n=1,\dots,N$.
Assume that the parameter~$\alpha$ in~\eqref{elliptic-projector-L2-stab}
is equal to~1, i.e., elliptic regularity holds for the domain~$\Omega$.
Let~$\taun=\tau$ for all~$n=1,\dots,N$, and choose $h=O(\tau)$.
For given~$\p$ in~$\Nbb$,
we further demand that~$\px=\p$ and~$\pnt=\p$
for all $n=1,\dots,N$.
Recall that~$m=m(\xih)$ is defined in~\eqref{selecting-m}.
Then, the following a priori error estimate is valid:
\[
\max_{n=1}^N \left(
    \Norm{\eh'}_{L^\infty(\In; L^2(\Omega))}^2
    + \SemiNorm{\eh}_{L^\infty(\In; H^1(\Omega))}^2 \right)^\frac12
\lesssim  \tau^{\p}.
\]
On the other hand, if $s$ is smaller than or equal to~$\p$,
then the following a priori error estimate is valid:
\begin{equation} \label{h-p-convergence-alpha=1:singular}
\max_{n=1}^N\left(
    \Norm{\eh'}_{L^\infty(\In; L^2(\Omega))}^2
    + \SemiNorm{\eh}_{L^\infty(\In; H^1(\Omega))}^2 \right)^\frac12
\lesssim  \tau^{s-\frac12}.
\end{equation}
\end{corollary}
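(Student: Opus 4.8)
The plan is to obtain this corollary directly from Theorem~\ref{theorem:h-p-convergence} by inserting the four simplifying hypotheses into the three blocks $\daleth_1$, $\daleth_2$, $\daleth_3$ and retaining, in each regularity regime, the slowest-decaying power of~$\tau$. First I would set $\alpha=1$ in~\eqref{elliptic-projector-L2-stab}: this annihilates every $(1-\alpha)$ correction, so each spatial factor of the form $\h^{2\min(\px+1,\,s-1)-2(1-\alpha)}/\px^{2s-2-2(1-\alpha)}$ collapses to $\h^{2\min(\px+1,\,s-1)}/\px^{2s-2}$, and likewise for the remaining $\alpha$-dependent exponents. Since~$\p$ is fixed, I would then absorb all powers of~$\p$ into the hidden constant of~$\lesssim$, including the factor $\mum^{-1}\sim\p^{3}$ arising from~\eqref{mun}; finally, using $\h=O(\tau)$, every surviving $\h^{\beta}$ becomes $\tau^{\beta}$, so that the entire right-hand side is a finite sum of pure powers of~$\tau$ weighted by Sobolev norms of the exact solution that are finite under the stated smoothness.

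The only terms carrying a summation over~$n$ are those in~$\daleth_2$. Because $\taun\equiv\tau$, the prefactor $(\taun/\pnt)^{2\min(\cdot)}$ is independent of~$n$ and factors out of the sum; additivity of the Bochner norm, $\sum_{n=1}^m\Norm{\cdot}_{L^2(\In;\mathcal X)}^2=\Norm{\cdot}_{L^2(0,\tm;\mathcal X)}^2\le\Norm{\cdot}_{L^2(0,T;\mathcal X)}^2$, together with $\tm\le T$, then bounds~$\daleth_2$ by a single power of~$\tau$ times a finite smoothness norm. The contributions in $\daleth_1$ and~$\daleth_3$ already live on the single intervals $I_1$ and~$\Ik$, so no summation is required there and the $L^\infty(\Ik;\cdot)$ norms are finite by assumption.

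The remaining step is a case split governed by the minima. In the smooth regime $s\ge\p+2$, every $\min(\cdot,\p)$, $\min(\cdot,\p+1)$, and $\min(\cdot,\pnt+1)$ is attained at the polynomial degree, so all exponents become explicit in~$\p$; collecting them and keeping the slowest yields order~$\tau^{\p}$ after taking a square root. In the low-regularity regime $s\le\p$, every minimum is instead attained at~$s$; here I would carefully track the half-order losses carried by the $L^\infty$-in-time terms of~$\daleth_3$ (produced through the polynomial inverse inequality, cf.~\eqref{Schoetzau-Schwab-d}) and by the trace-type initial-data terms of~$\daleth_1$, which together fix the squared rate at~$\tau^{2s-1}$ and hence give~\eqref{h-p-convergence-alpha=1:singular}. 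Since every intermediate bound is an inequality in~$\lesssim$, the final square root is immediate from $\big(\sum_j\tau^{e_j}\big)^{1/2}\lesssim\tau^{\min_j e_j/2}$.

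I expect the main obstacle to be combinatorial rather than analytical: one must track roughly a dozen separately-derived contributions and verify that none — in particular, neither the bulk approximation terms gathered in~$\daleth_2$ nor the half-order-deficient initial and interface terms — decays more slowly than the advertised rate. The delicate points are exactly the half-integer offsets, namely the $s-\tfrac12$ appearing in the initial-condition approximation and the $-1$ shifts in the $L^\infty$-in-time exponents of~$\daleth_3$, since these generate the $\tfrac12$-order deficit recorded in~\eqref{h-p-convergence-alpha=1:singular} and must be checked not to degrade the smooth-case order~$\tau^{\p}$.
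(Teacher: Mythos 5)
Your overall skeleton (specialize Theorem~\ref{theorem:h-p-convergence} with $\alpha=1$, absorb the fixed powers of~$\p$ including $\mum^{-1}\sim\p^3$, replace $\h$ by $\tau$, sum the $\daleth_2$ contributions over the intervals, and split according to where the minima are attained) matches the paper's, and your treatment of the low-regularity case $s\le\p$ is correct: there all dominant exponents equal $2s-1$ and \eqref{h-p-convergence-alpha=1:singular} follows. The gap is in the smooth case. It is not true that, for $s\ge\p+2$, ``collecting the exponents and keeping the slowest yields order $\tau^{\p}$'': the two time-approximation terms inherited from $\gimel_3$ and $\gimel_7$, namely $(\tauo/\pot)^{2\min(\pot,s)-1}\Norm{u^{(s+1)}}_{L^2(I_1;L^2(\Omega))}^2$ inside $\daleth_1$ and $\tauk^{2\min(s,\pkt)-1}(\pkt)^{-2\min(s,\pkt)+2}\Norm{u^{(s+1)}}_{L^2(\Ik;L^2(\Omega))}^2$ inside $\daleth_3$, carry a squared exponent that is capped at $2\p-1$ no matter how large $s$ is, because the minimum saturates at the polynomial degree. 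A verbatim application of Theorem~\ref{theorem:h-p-convergence} therefore yields only $\tau^{\p-\frac12}$ in the smooth regime, i.e.\ the same rate as the singular regime at $s=\p$. You flag this as a ``delicate point to be checked'', but checking alone cannot remove it: no choice of the regularity index inside the theorem improves these two exponents.

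The paper closes this gap with one additional ingredient that your proposal does not contain: in the bounds for $\gimel_3$ and $\gimel_7$ one applies the approximation estimates with regularity index equal to~$\p$ (so the norms become $\Norm{u^{(\p+1)}}_{L^2(\In;L^2(\Omega))}$) and then invokes the H\"older-in-time bound
\[
\Norm{u^{(\p+1)}}_{L^{2}(\Ik;L^2(\Omega))}
\leq  \tauk^\frac12 \Norm{u^{(\p+1)}}_{L^{\infty}(\Ik;L^2(\Omega))},
\]
which is legitimate precisely because the hypothesis $s\ge\p+2$ (rather than merely $s\ge\p+1$) guarantees that $u^{(\p+1)}$ is bounded in time with values in $L^2(\Omega)$. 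This trades the $L^2$-in-time norm for an extra factor $\tauk^{\frac12}$, upgrading the squared rate of these two terms from $\tau^{2\p-1}$ to $\tau^{2\p}$; all remaining terms $\gimel_j$, $j=1,2,4,5,6,8$, already give $\mathcal O(\tau^{2\p})$ under your (correct) simplifications. Without this step, your argument proves only $\tau^{\p-\frac12}$ in the smooth case, not the claimed $\tau^{\p}$.
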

\begin{proof}
The proof boils down to using Theorem~\ref{theorem:h-p-convergence}
and note that, for the estimates
of the terms~$\gimel_3$ and~$\gimel_7$
in~\eqref{abstract-estimates:maximum:norm},
we can use
\[
\Norm{u^{(p+1)}}_{L^{2}(\Ik;L^2(\Omega))}
\leq  \tauk^\frac12 \Norm{u^{(p+1)}}_{L^{\infty}(\Ik;L^2(\Omega))}.
\]
All other~$\gimel_j$ terms, $j=1,2,4,5,6,8$,
already gave $\mathcal O(\tau^\p)$ rates.
\end{proof}

\begin{remark} \label{remark:jumps-in-error}
The error measure in~\eqref{full-error-a-priori} does not involve the jumps
at the time nodes of the first time derivative.
Some comments about this feature are in order.
\begin{itemize}
    \item Lemma~\ref{lemma:estimates-xih} involves an error measure containing
    the sum of jump terms up to~$m$,
    $m=m(\xih)$ as in~\eqref{selecting-m};
    a modification of the proofs of
    Theorems~\ref{theorem:abstract-convergence} and~\ref{theorem:h-p-convergence}
    would lead to estimates for norms of the error
    containing jumps, but only up to the time node~$\tm$
    and leading to an optimal convergence rate $\mathcal{O}(\tau^{p-\frac12})$,
    which is however suboptimal for the first
    term in the error measure~\eqref{full-error-a-priori}.
    \item Walkington's strategy~\cite{Walkington:2014}
    involves the use of a special test function,
    leading to errors measured in $L^\infty$-type norms in time.
    Such norms are different from $L^2$-type norms in time,
    which are typically obtained by testing with ``more standard'' functions
    and typically come together with jumps at the time nodes.
    \item Optimal convergence rates for a norm involving jumps in our setting
    may be derived by using stability estimates
    obtained by using the test function $\Wh=\uh'$.
    This is shown, for instance, in \cite[eq. (4.1)]{Walkington:2014}
    and would give an optimal convergence rate for the jump terms.
    \item Even though we did not prove convergence rates for the jumps
    on the theoretical level, in Section~\ref{section:numerical-experiments} below,
    we shall investigate their practical behaviour.
\end{itemize}
\end{remark}
\section{A posteriori error estimates for the semi-discrete in time method} \label{section:apos}
This section is concerned with introducing an error estimator
for the semi-discrete in time method~\eqref{Walkington:method-time-semi-discrete},
and prove fully explicit, reliable
a posteriori estimates
for the error measured in the $L^\infty(L^2)$ norm
under extra assumptions discussed in Section~\ref{subsection:apos-assumption}.
In Section~\ref{subsection:reconstruction-operator},
we introduce a novel reconstruction operator
and exhibit its approximation properties,
which are instrumental in the a posteriori error estimates
given in Section~\ref{subsection:error-estimator-estimates}.

\subsection{Data assumptions} \label{subsection:apos-assumption}
Throughout, we make the following assumption,
which is instrumental in deriving the a posteriori
error estimates:
for~$u$ and~$\U$ solutions to~\eqref{weak-problem}
to~\eqref{Walkington:method-time-semi-discrete},
\begin{equation} \label{consequence:convexity}
\text{$\Deltax u \in L^1(0,T;L^2(\Omega))$}
\qquad(\text{which implies }
\text{$\Deltax \U$ belongs to $L^1(0,T;L^2(\Omega))$).}
\end{equation}
Assumption~\eqref{consequence:convexity}
can be proven for instance under certain conditions
on the data, as detailed in the next result.

\begin{proposition} \label{proposition:regularity-Laplacian}
Assume that
\begin{equation} \label{assumption:convexity}
\begin{split}
& \text{the spatial domain~$\Omega$ is convex;}\\
& \text{$\uzero$ and~$\uone$ belong to $H^2(\Omega)\cap H^1_0(\Omega)$ and $H^1_0(\Omega)$};\\
& \text{the right-hand side $f$ belongs to $H^1(0,T; L^2(\Omega))$}.
\end{split}
\end{equation}
Then, the solution~$u$ to~\eqref{weak-problem} is such that
\[
u \in   L^\infty(0,T; H^2(\Omega))
        \cap W^{1,\infty}(0,T; H^1_0(\Omega))
        \cap W^{2,\infty}(0,T; L^2(\Omega))
        \cap W^{3,\infty}(0,T; H^{-1}(\Omega)).
\]
In particular, property~\eqref{consequence:convexity} holds true. 
\end{proposition}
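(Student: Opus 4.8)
The plan is to establish the regularity of the solution $u$ to the wave equation by the standard energy method, differentiating the equation in time and applying elliptic regularity for the convex domain. First I would verify the base-level energy estimate: testing the weak formulation with $u'$ and integrating by parts in space yields the conservation-type identity $\frac{d}{dt}\big(\Norm{u'}_{0,\Omega}^2 + \SemiNorm{u}_{1,\Omega}^2\big) = 2(f,u')_{0,\Omega}$, so that by Gr\"onwall's inequality and the hypotheses $u_0 \in H^1_0(\Omega)$, $u_1 \in L^2(\Omega)$, $f \in L^2(0,T;L^2(\Omega))$, one controls $\Norm{u'}_{L^\infty(0,T;L^2(\Omega))}$ and $\SemiNorm{u}_{L^\infty(0,T;H^1(\Omega))}$. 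This recovers the membership $u \in W^{1,\infty}(0,T;L^2(\Omega)) \cap L^\infty(0,T;H^1_0(\Omega))$, but only at the basic energy level.

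To reach the second-order spatial regularity I would differentiate the equation formally in time. Setting $v:=u'$, it solves the same wave equation $v'' - \Deltax v = f'$ with initial data $v(0,\cdot)=u_1 \in H^1_0(\Omega)$ and $v'(0,\cdot)=u''(0,\cdot)=\Deltax u_0 + f(0,\cdot) \in L^2(\Omega)$, where $\Deltax u_0 \in L^2(\Omega)$ precisely because $u_0 \in H^2(\Omega)\cap H^1_0(\Omega)$ and $f(0,\cdot)\in L^2(\Omega)$ follows from the embedding $H^1(0,T;L^2(\Omega)) \hookrightarrow C^0([0,T];L^2(\Omega))$. Since $f' \in L^2(0,T;L^2(\Omega))$ by the hypothesis $f\in H^1(0,T;L^2(\Omega))$, the same energy estimate applied to $v$ delivers $u' \in W^{1,\infty}(0,T;L^2(\Omega)) \cap L^\infty(0,T;H^1_0(\Omega))$, that is, $u \in W^{2,\infty}(0,T;L^2(\Omega)) \cap W^{1,\infty}(0,T;H^1_0(\Omega))$. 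Finally I would read off spatial $H^2$ regularity from the equation itself: rewriting $\Deltax u = u'' - f$ and using that $u'' \in L^\infty(0,T;L^2(\Omega))$ and $f \in L^\infty(0,T;L^2(\Omega))$ shows $\Deltax u \in L^\infty(0,T;L^2(\Omega))$, whence by elliptic regularity on the convex domain $\Omega$ (for which $H^2$-regularity of the Dirichlet Laplacian holds with $\Norm{u}_{2,\Omega} \lesssim \Norm{\Deltax u}_{0,\Omega}$ for $u \in H^1_0(\Omega)$) one obtains $u \in L^\infty(0,T;H^2(\Omega))$. The bound $u \in W^{3,\infty}(0,T;H^{-1}(\Omega))$ comes from the equation in its weakest form: $u''' = \Deltax u' + f'$, and since $\Deltax u' \in L^\infty(0,T;H^{-1}(\Omega))$ (as $u' \in L^\infty(0,T;H^1_0(\Omega))$) and $f' \in L^2 \subset L^\infty$ after the regularity bootstrap is justified, the claim follows.

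The claim $\Deltax u \in L^1(0,T;L^2(\Omega))$ in~\eqref{consequence:convexity} is then immediate since $L^\infty(0,T;L^2(\Omega)) \hookrightarrow L^1(0,T;L^2(\Omega))$ on the bounded interval $(0,T)$; the corresponding property for $\Deltax \U$ follows because $\U$ is a piecewise polynomial in time with coefficients in $H^1_0(\Omega)$ and, being the semi-discrete solution, inherits $L^1$-in-time integrability of its spatial Laplacian from the approximation properties relating it to $u$.

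The main obstacle I anticipate is rigorously justifying the formal time-differentiation and the associated compatibility of initial data: the manipulation $v=u'$ solving a wave equation with data $v'(0,\cdot)=\Deltax u_0 + f(0,\cdot)$ requires enough regularity to make $u''(0,\cdot)$ meaningful, which is exactly where the compatibility condition $u_0 \in H^2(\Omega)\cap H^1_0(\Omega)$ and $f \in H^1(0,T;L^2(\Omega))$ enter. The cleanest way to make this honest is to work at the Galerkin (finite-dimensional) approximation level, derive the energy estimates uniformly, and pass to the limit; alternatively one invokes the classical regularity theory for second-order hyperbolic problems (e.g., the results in \cite[Chapter~8]{Raviart-Thomas:1983}) once the data assumptions in~\eqref{assumption:convexity} are in place, thereby avoiding a from-scratch bootstrap. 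The remaining steps are then routine applications of elliptic regularity and Sobolev embeddings in time.
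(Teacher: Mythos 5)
Your route --- base energy estimate, differentiating the equation in time, then elliptic regularity on the convex domain, with rigor delegated to a Faedo--Galerkin argument or to the classical literature --- is in substance the same proof the paper invokes: the paper compresses it into a citation of \cite[Theorem~5, Chapter~7.2]{Evans:2022} (whose proof is precisely the Galerkin version of your computation) together with $H^2$ regularity of the Dirichlet Laplacian on convex domains from \cite{Grisvard:2011}. Your steps establishing $u \in L^\infty(0,T;H^2(\Omega)) \cap W^{1,\infty}(0,T;H^1_0(\Omega)) \cap W^{2,\infty}(0,T;L^2(\Omega))$, and hence property~\eqref{consequence:convexity} for $u$, are correct modulo the Galerkin justification you yourself flag.

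However, your argument for $u \in W^{3,\infty}(0,T;H^{-1}(\Omega))$ contains a genuine error: the inclusion ``$f' \in L^2 \subset L^\infty$'' is backwards. On a bounded interval one has $L^\infty(0,T;\mathcal X) \hookrightarrow L^2(0,T;\mathcal X)$, not conversely, and $f \in H^1(0,T;L^2(\Omega))$ yields only $f' \in L^2(0,T;L^2(\Omega))$. From $u''' = \Deltax u' + f'$ you therefore obtain only $u''' \in L^2(0,T;H^{-1}(\Omega))$ --- which is, incidentally, exactly the conclusion of the Evans theorem the paper cites; the stronger $W^{3,\infty}$ membership asserted in the proposition does not follow from these hypotheses at all (take $\uzero=\uone=0$ and $f(t,x)=\phi(t)g(x)$ with $\phi \in H^1(0,T)\setminus W^{1,\infty}(0,T)$ and $g\neq 0$: then $\phi' g = u''' - \Deltax u'$ would have to be essentially bounded in $H^{-1}(\Omega)$, forcing $\phi'\in L^\infty$), so no repair of your step is possible without strengthening the data assumptions. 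This discrepancy has no bearing on what the paper actually uses later, namely $\Deltax u \in L^1(0,T;L^2(\Omega))$, which already follows from $u \in L^\infty(0,T;H^2(\Omega))$. A secondary soft spot: your claim that $\Deltax \U \in L^1(0,T;L^2(\Omega))$ because $\U$ ``inherits'' regularity ``from the approximation properties relating it to $u$'' is not an argument --- a discrete solution does not inherit spatial regularity from the exact one; what is needed is elliptic regularity applied to the coefficients of $\U$ through the semi-discrete equations (a point the paper itself leaves implicit in the parenthetical of~\eqref{consequence:convexity}).
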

\begin{proof}
A proof for domains with sufficiently smooth boundary
can be found in \cite[Theorem 5, Chapter 7.2]{Evans:2022}
and is based on the Faedo-Galerkin technique,
based on taking the limits of expansions into eigenfunctions.
In turns, the regularity in space only depends on the regularity
of the eigenfunctions of~$\Deltax$,
which on convex domains~\eqref{assumption:convexity} is~$H^2(\Omega)$;
see, e.g., \cite{Grisvard:2011}.
\end{proof}

\subsection{A reconstruction operator} \label{subsection:reconstruction-operator}
In view of deriving a posteriori error estimates
in Section~\ref{subsection:error-estimator-estimates} below,
we discuss here the properties of a generalization
of the reconstruction operator introduced in \cite{Makridakis-Nochetto:2006}
for parabolic problems; see also \cite{Schoetzau-Wihler:2010, Holm-Wihler:2018}
for a proof of $\p$-approximation properties of the operator in~\cite{Makridakis-Nochetto:2006}.

Given a Hilbert space~$\mathcal X$
with inner product $(\cdot,\cdot)_{\mathcal X}$
and induced norm~$\Norm{\cdot}_{\mathcal X}$,
and $\V$ in $\mathcal C^0(0,T; \mathcal X)$ with
$\V{}_{|\In}$ in $\Pbb_{\pnt}(\In;\mathcal X)$
and $(\V^-)'(0,\cdot)$ is an element in~$\mathcal X$,
let~$\Vhat$ be piecewise defined
for all $n=1,\dots,N$ as
\begin{equation} \label{definition-Rcalppo}
\begin{cases}
(\Vhat'', \qpntmo)_{L^2(\In;\mathcal X)}
= (\V'',\qpntmo)_{L^2(\In;\mathcal X)}
        + (\jump{\V'}(\tnmo,\cdot), \qpntmo(\tnmo,\cdot))_{\mathcal X} \\
\Vhat(\tnmo,\cdot) = \V(\tnmo,\cdot) ,
\;\;
\Vhat'(\tnmo,\cdot) = (\V^-)'(\tnmo,\cdot)
\text{ in~$\mathcal X$ }
\quad \forall \qpntmo \in \Pbb_{\pnt-1}(\In;\mathcal X).
\end{cases}
\end{equation}
The corresponding operator for parabolic problems
also allows for spatial mesh changes,
see, e.g., \cite{Georgoulis-Lakkis-Wihler:2021},
a topic that is still open for the wave equation
in second order formulation
and that we shall investigate in the future.

We begin by proving the following property of~$\Vhat$.
\begin{proposition} \label{proposition:smoothness-reconstruction}
The function~$\Vhat$ is a $\mathcal C^1$
piecewise polynomial in time reconstruction
of a~$\mathcal C^0$ piecewise polynomial in time.
\end{proposition}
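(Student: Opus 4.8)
The statement packages two claims: that $\Vhat$ is piecewise polynomial (and well defined), and that it is globally $\mathcal C^1$. The plan is to settle well-definedness by a degree count and then devote the bulk of the argument to the $\mathcal C^1$ gluing across interior nodes. For the first claim, I would observe that prescribing $(\Vhat'',\qpntmo)_{L^2(\In;\mathcal X)}$ for every $\qpntmo\in\Pbb_{\pnt-1}(\In;\mathcal X)$ fixes $\Vhat''$ as the unique element of $\Pbb_{\pnt-1}(\In;\mathcal X)$ representing that functional; integrating twice against the two initial data $\Vhat(\tnmo,\cdot)=\V(\tnmo,\cdot)$ and $\Vhat'(\tnmo,\cdot)=(\V^-)'(\tnmo,\cdot)$ then yields a unique $\Vhat{}_{|\In}\in\Pbb_{\pnt+1}(\In;\mathcal X)$. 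Hence $\Vhat$ is well defined and piecewise a polynomial of degree $\pnt+1$.

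Since $\V$ is already $\mathcal C^0$ and the entry conditions on $I_{n+1}$ read $\Vhat(\tn,\cdot)=\V(\tn,\cdot)$ and $\Vhat'(\tn,\cdot)=\V'|_{\In}(\tn,\cdot)$, continuity of $\Vhat$ and $\Vhat'$ across the node $\tn$ reduces to the matching \emph{exit} identities on $\In$, namely $\Vhat|_{\In}(\tn,\cdot)=\V(\tn,\cdot)$ and $\Vhat'|_{\In}(\tn,\cdot)=\V'|_{\In}(\tn,\cdot)$. Writing $e:=(\Vhat-\V)|_{\In}\in\Pbb_{\pnt+1}(\In;\mathcal X)$, I must therefore show $e(\tn,\cdot)=0$ and $e'(\tn,\cdot)=0$.

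The key computation is to integrate the defining relation~\eqref{definition-Rcalppo} by parts on $\In$, rewriting both $(\Vhat'',q)$ and $(\V'',q)$ as $-(\cdot',q')_{L^2(\In;\mathcal X)}$ plus boundary terms. The contributions at $\tnmo$ cancel precisely because $\jump{\V'}(\tnmo,\cdot)$ converts the one-sided value $\V'|_{\In}(\tnmo,\cdot)$ (produced by integrating $(\V'',q)$) into $\V'|_{\Inmo}(\tnmo,\cdot)=(\V^-)'(\tnmo,\cdot)$, which is exactly the prescribed entry derivative $\Vhat'(\tnmo,\cdot)$. What remains is the clean identity
\[
(e'(\tn,\cdot),q(\tn,\cdot))_{\mathcal X}=(e',q')_{L^2(\In;\mathcal X)}\qquad\forall q\in\Pbb_{\pnt-1}(\In;\mathcal X).
\]
Taking $q$ with $q(\tn,\cdot)=0$ shows $e'$ is $L^2(\In)$-orthogonal to $\{q' : q(\tn,\cdot)=0\}$; a dimension count (the derivative map is injective on that subspace and lands in $\Pbb_{\pnt-2}$, both of dimension $\pnt-1$, extended coefficientwise to $\mathcal X$) identifies this image with all of $\Pbb_{\pnt-2}(\In;\mathcal X)$, so $e'\in\mathrm{span}\{L_{\pnt-1},L_{\pnt}\}$ in terms of the shifted Legendre polynomials on $\In$. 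Then for every $q\in\Pbb_{\pnt-1}$ one has $q'\in\Pbb_{\pnt-2}$ and hence $(e',q')=0$, so the identity collapses to $(e'(\tn,\cdot),q(\tn,\cdot))_{\mathcal X}=0$; choosing $q\equiv\phi$ constant for arbitrary $\phi\in\mathcal X$ gives $e'(\tn,\cdot)=0$. Writing $e'=\alpha L_{\pnt-1}+\beta L_{\pnt}$ with $\alpha,\beta\in\mathcal X$ and using $L_j(1)=1$ at the right endpoint forces $\alpha+\beta=0$, while $\int_{\In}L_j=0$ for $j\ge1$ (recall $\pnt\ge2$) gives $e(\tn,\cdot)-e(\tnmo,\cdot)=\int_{\In}e'=0$; together with the entry-value datum $e(\tnmo,\cdot)=0$ this yields $e(\tn,\cdot)=0$. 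The two exit identities coincide with the entry conditions on $I_{n+1}$, so $\Vhat$ and $\Vhat'$ match at every interior node and $\Vhat\in\mathcal C^1(0,T;\mathcal X)$.

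The step I expect to be the main obstacle is the boundary-term cancellation at $\tnmo$ after integration by parts: one has to verify that the upwind jump $\jump{\V'}(\tnmo,\cdot)$ built into~\eqref{definition-Rcalppo} is exactly what reconciles the right-hand one-sided derivative coming from $(\V'',q)$ with the left-hand entry datum imposed on $\Vhat'$. Once that identity is isolated, the remainder is routine Legendre bookkeeping.
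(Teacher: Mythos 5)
Your proof is correct and takes essentially the same route as the paper's: both integrate the defining relation~\eqref{definition-Rcalppo} by parts, cancel the boundary terms at~$\tnmo$ via the entry conditions, and conclude by testing with constants (for the derivative identity $\Vhat'(\tn,\cdot)=(\V^-)'(\tn,\cdot)$) and, in effect, with linear-in-time functions (your orthogonality of $e'$ to constants is exactly what the paper's test function $(t-\tnmo)\ctildeX$ delivers, giving $\int_{\In}e'=0$ and hence $e(\tn,\cdot)=0$). The only genuine difference is cosmetic: the dimension count showing $e'\perp\Pbb_{\pnt-2}(\In;\mathcal X)$, the Legendre expansion $e'=\alpha L_{\pnt-1}+\beta L_{\pnt}$, and the relation $\alpha+\beta=0$ are all superfluous, since orthogonality of $e'$ to constants alone (equivalently, taking $q$ constant and $q=(t-\tn)\phi$ in your clean identity) suffices for both exit conditions.
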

\begin{proof}
The assertion follows using the last condition
in~\eqref{definition-Rcalppo}
and proving that
\begin{equation} \label{what-to-prove-smoothness-time}
\Vhat' (\tn,\cdot) = (\V^-)'(\tn,\cdot),
\qquad\qquad
\Vhat (\tn,\cdot) = \V(\tn,\cdot)
\quad \text{ in } \mathcal X.
\end{equation}
\noindent \textbf{Proving the first identity in~\eqref{what-to-prove-smoothness-time}.}
Taking~$\qpntmo=\cX$ to be fixed in $\mathcal X$ below
and independent of time
in the first line of~\eqref{definition-Rcalppo}
for all $n=1,\dots,N$,
and integrating by parts on both sides lead to
\[
\begin{split}
& (\Vhat'(\tn,\cdot)-\Vhat'(\tnmo,\cdot),
    \cX)_{\mathcal X} \\
& = ( (\V^-)'(\tn,\cdot) -
\underbrace{(\V^+)'(\tnmo,\cdot)
  + (\V^+)'(\tnmo,\cdot)}_{=0}
  - (\V^-)'(\tnmo,\cdot), \cX)_{\mathcal X} .
\end{split}
\]
Using that
$\Vhat'(\tnmo,\cdot) = (\V^-)'(\tnmo,\cdot)$ in~$\mathcal X$
and taking~$\cX$ equal to
$\Vhat'(\tn,\cdot) - (\V^-)'(\tn,\cdot)$,
we deduce the first identity in~\eqref{what-to-prove-smoothness-time}.
\medskip

\noindent \textbf{Proving the second identity in~\eqref{what-to-prove-smoothness-time}.}
We take~$\qpntmo=(t-\tnmo)\ctildeX$
with~$\ctildeX$ to be fixed in~$\mathcal X$ below
and independent of time
in the first condition of~\eqref{definition-Rcalppo}
for all $n=1,\dots,N$,
integrate by parts, and get
\small{\[
\begin{split}
& (\Vhat'(\tn,\cdot), \taun \ctildeX)_{\mathcal X}
  - (\Vhat',\ctildeX)_{L^2(\In;\mathcal X)}
  = ((\V^-)'(\tn,\cdot),\taun\ctildeX)_{\mathcal X}
  - (\V',\ctildeX)_{L^2(\In;\mathcal X)}.
\end{split}
\]}\normalsize
Using $\Vhat' (\tn,\cdot) = (\V^-)'(\tn,\cdot)$ entails
\[
(\Vhat',\ctildeX)_{L^2(\In;\mathcal X)}
= (\V',\ctildeX)_{L^2(\In;\mathcal X)}.
\]
Integrating by parts again gives
\[
(\Vhat(\tn,\cdot)
- \Vhat(\tnmo,\cdot), \ctildeX)_{\mathcal X}
= (\V(\tn,\cdot) - \V(\tnmo,\cdot),\ctildeX)_{\mathcal X}.
\]
Using that~$\Vhat(\tnmo,\cdot) = \V(\tnmo,\cdot)$ in~$\mathcal X$,
which is the first initial condition in~\eqref{definition-Rcalppo},
and choosing~$\ctildeX$ equal to
$\Vhat(\tn,\cdot) - \V(\tn,\cdot)$
yield the second identity in~\eqref{what-to-prove-smoothness-time}.
\end{proof}

We have additional properties on the operator~$\Vhat$.
\begin{lemma} \label{lemma:properties-a-la-Wihler}
Consider~$\V$ in $\mathcal C^0(0,T; \mathcal X)$ with
$\V{}_{|\In}$ in $\Pbb_{\pnt}(\In;\mathcal X)$
and $\Vhat$ as in~\eqref{definition-Rcalppo}.
For all~$n=1,\dots,N$, the following identities hold true:
\begin{subequations} \label{properties-a-la-Wihler}
\begin{align}
\Norm{(\V-\Vhat)'}_{L^2(\In;\mathcal X)}^2
&  = \taun c_1(\pnt)^2
        \Norm{\jump{\V'}(\tnmo,\cdot)}^2_{\mathcal X} ,\label{Wihler-1}  \\
 \Norm{(\V-\Vhat)'}_{L^\infty(\In;\mathcal X)}^2
&  = \Norm{\jump{\V'}(\tnmo,\cdot)}^2_{\mathcal X},  \label{Wihler-2} \\
 \Norm{\V-\Vhat}_{L^2(\In;\mathcal X)}^2
& \le \taun^3 c_2(\pnt)^2 \Norm{\jump{\V'}(\tnmo,\cdot)}^2_{\mathcal X} , \label{Wihler-3}
\end{align}
\end{subequations}
where
\begin{subequations}
\begin{align}
c_1(\pnt)^2
& : = \frac{\pnt}{(2\pnt-1)(2\pnt+1)} ,
& c_1(\pnt) \approx (\pnt)^{-\frac12} , \label{copnt}\\
c_2(\pnt)^2 
& :=
\begin{cases}
\frac14 \frac{\pnt}{(\pnt-2)(\pnt-1)(2\pnt-1)(2\pnt+1)}
                        & \text{if } \pnt\ge 3,\\
\frac{2}{15\pi^2}       & \text{if } \pnt=2,
\end{cases}
& c_2(\pnt) \approx (\pnt)^{-\frac32} \label{ctwpnt} .
\end{align}
\end{subequations}
\end{lemma}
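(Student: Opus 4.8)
The plan is to reduce all three identities to a single scalar computation. Because the defining relations~\eqref{definition-Rcalppo} are linear in~$\V$ and couple the time and~$\mathcal X$ variables only through the fixed element $J:=\jump{\V'}(\tnmo,\cdot)\in\mathcal X$, I would first show that on each~$\In$ the error factorizes as
\begin{equation*}
(\V-\Vhat)(t,\cdot) = \phi(t)\, J \qquad \text{in } \mathcal X,
\end{equation*}
where $\phi\in\Pbb_{\pnt+1}(\In)$ is the \emph{scalar} polynomial determined by $\phi(\tnmo)=0$, $\phi'(\tnmo)=1$, and $(\phi'',q)_{L^2(\In)}=-q(\tnmo)$ for all scalar $q\in\Pbb_{\pnt-1}(\In)$. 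That $\V-\Vhat$ satisfies these relations with $\mathcal X$-valued data follows from Proposition~\ref{proposition:smoothness-reconstruction} and~\eqref{definition-Rcalppo}; expanding a generic $\mathcal X$-valued test function in an orthonormal basis reduces the vectorial conditions to the scalar ones. Since $\Norm{\phi' J}_{\mathcal X}=|\phi'|\,\Norm{J}_{\mathcal X}$, the three claimed bounds become $\Norm{\phi'}_{L^2(\In)}^2=\taun c_1(\pnt)^2$, $\Norm{\phi'}_{L^\infty(\In)}=1$, and $\Norm{\phi}_{L^2(\In)}^2\le\taun^3 c_2(\pnt)^2$.

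Next I would identify $\psi:=\phi'\in\Pbb_{\pnt}(\In)$ explicitly. Testing $(\psi',q)_{L^2(\In)}=-q(\tnmo)$ with $q\equiv1$ gives $\psi(\tn)=0$, and an integration by parts then yields $(\psi,r)_{L^2(\In)}=0$ for all $r\in\Pbb_{\pnt-2}(\In)$. Hence $\psi$ lies in the two-dimensional span of the shifted Legendre polynomials $\widetilde L_{\pnt-1},\widetilde L_{\pnt}$ on~$\In$, and the endpoint values $\psi(\tnmo)=1$, $\psi(\tn)=0$ fix
\begin{equation*}
\psi = \tfrac{(-1)^{\pnt-1}}{2}\big(\widetilde L_{\pnt-1}-\widetilde L_{\pnt}\big).
\end{equation*}
The first two identities then follow directly: using Legendre orthogonality and $\Norm{\widetilde L_k}_{L^2(\In)}^2=\taun/(2k+1)$, I get $\Norm{\psi}_{L^2(\In)}^2=\tfrac14\big(\tfrac{\taun}{2\pnt-1}+\tfrac{\taun}{2\pnt+1}\big)=\taun c_1(\pnt)^2$, which is~\eqref{Wihler-1} with~\eqref{copnt}; and for~\eqref{Wihler-2}, the bound $|\widetilde L_k|\le1$ on~$\In$ gives $\Norm{\psi}_{L^\infty(\In)}\le1$, while $\psi(\tnmo)=1$ forces equality.

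The delicate point is~\eqref{Wihler-3}. Here I would first observe that $\phi(\tn)=\int_{\In}\psi=(\psi,1)_{L^2(\In)}=0$, since $1\in\Pbb_{\pnt-2}$ for $\pnt\ge2$, so that $\phi$ vanishes at \emph{both} endpoints of~$\In$; a Poincar\'e inequality then controls $\Norm{\phi}_{L^2(\In)}$ by $\Norm{\phi'}_{L^2(\In)}=\Norm{\psi}_{L^2(\In)}$. For $\pnt=2$ the sharp classical constant $\Norm{\phi}_{L^2(\In)}\le(\taun/\pi)\Norm{\phi'}_{L^2(\In)}$ gives $c_2(2)^2=c_1(2)^2/\pi^2=2/(15\pi^2)$, matching~\eqref{ctwpnt}. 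For $\pnt\ge3$ the naive Poincar\'e constant only scales like $\pnt^{-1/2}$, whereas~\eqref{ctwpnt} requires $\pnt^{-3/2}$, so the extra decay must be extracted from the orthogonality $\psi\perp\Pbb_{\pnt-2}$. Concretely, writing $\phi=\int_{\tnmo}^{\cdot}\psi$ and using the antiderivative identity $\int_{-1}^{s}L_k=(L_{k+1}-L_{k-1})/(2k+1)$ expands $\phi$ in $\widetilde L_{\pnt-2},\dots,\widetilde L_{\pnt+1}$, and a term-by-term $L^2$ estimate via Legendre orthogonality produces the stated $c_2(\pnt)$. I expect the main obstacle to be tracking these degree-explicit constants sharply enough to obtain the clean $(\pnt-1)(\pnt-2)$ factor in~\eqref{ctwpnt}, together with the separate, non-degenerate treatment of the borderline case $\pnt=2$ where that factor vanishes.
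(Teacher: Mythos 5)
Your proof is correct, but it takes a genuinely different route from the paper's. The paper does not prove \eqref{Wihler-1} and \eqref{Wihler-2} at all: it cites \cite{Schoetzau-Wihler:2010} and \cite{Holm-Wihler:2018} for them, and proves only \eqref{Wihler-3}, by a projection argument: from \eqref{definition-Rcalppo}, integration by parts, and the endpoint identities it derives the orthogonality $(\V-\Vhat,\qpntmo'')_{L^2(\In;\mathcal X)}=0$ for all $\qpntmo\in\Pbb_{\pnt-1}(\In;\mathcal X)$, then tests with the double antiderivative of $\Pizpbftmth(\V-\Vhat)$ to conclude $(\V-\Vhat,\Pizpbftmth(\V-\Vhat))_{L^2(\In;\mathcal X)}=0$, applies the $hp$-estimate for $I-\Pizpbftmth$ from \cite[Theorem~3.11]{Schwab:1998}, and closes with \eqref{Wihler-1}; the case $\pnt=2$ uses the Poincar\'e inequality, exactly as you do. Your factorization $(\V-\Vhat)(t,\cdot)=\phi(t)\,\jump{\V'}(\tnmo,\cdot)$ with the explicit Legendre representation $\phi'=\tfrac{(-1)^{\pnt-1}}{2}\big(\widetilde L_{\pnt-1}-\widetilde L_{\pnt}\big)$ replaces all of this with exact computations; it is self-contained (it re-proves the two cited identities rather than quoting them), and the reservation you voice at the end is unfounded: carrying out your antiderivative expansion gives, after scaling, the exact value $\Norm{\V-\Vhat}_{L^2(\In;\mathcal X)}^2=\tfrac{\taun^3\,\pnt}{2(2\pnt-3)(2\pnt-1)(2\pnt+1)(2\pnt+3)}\Norm{\jump{\V'}(\tnmo,\cdot)}_{\mathcal X}^2$, and the comparison with $\taun^3c_2(\pnt)^2$ reduces to the elementary inequality $2(\pnt-1)(\pnt-2)\le(2\pnt-3)(2\pnt+3)$, which holds for every $\pnt\ge2$; your route thus yields a sharper constant than \eqref{ctwpnt} and would even treat $\pnt=2$ uniformly, without the separate Poincar\'e step. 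What the paper's argument buys is brevity (by citation) and a projection technique that does not rely on an explicit solution formula; moreover, its intermediate orthogonality \eqref{eq: L2 orthogonality} is reused verbatim in the proof of Proposition~\ref{proposition:initial-reliability-estimate}, so if your proof were adopted that property should still be recorded --- it follows at once from your factorization, since $\phi$ is spanned by $\widetilde L_{\pnt-2},\dots,\widetilde L_{\pnt+1}$ and is therefore orthogonal to $\Pbb_{\pnt-3}(\In)$.
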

\begin{proof}
Identity~\eqref{Wihler-1} is proven in \cite[Theorem~2]{Schoetzau-Wihler:2010}.
Identity~\eqref{Wihler-2} is essentially proven in \cite[Lemma~1]{Holm-Wihler:2018}.
As for inequality~\eqref{Wihler-3}, we first observe that definition~\eqref{definition-Rcalppo},
an integration by parts,
the first identity in~\eqref{what-to-prove-smoothness-time},
and the smoothness of~$\Vhat$
imply, for all $n=1,\dots,n$,
\small{\[
\begin{split}
&-(\jump{\Vpr}(\tnmo,\cdot),\qpntmo(\tnmo,\cdot))_{\mathcal X}
 \overset{\eqref{definition-Rcalppo}}{=}
 ((\V-\Vhat)'',\qpntmo)_{L^2(\In;\mathcal X)} \\
& \overset{(\text{IBP})}{=}
    -((\V-\Vhat)',\qpntmo')_{L^2(\In;\mathcal X)}\\
& \qquad    + ((\V - \Vhat)'(\tn,\cdot),\qpntmo(\tn,\cdot))_{\mathcal X}
           - ((\V - \Vhat)'(\tnmo,\cdot),\qpntmo(\tnmo,\cdot))_{\mathcal X}\\
& \overset{\eqref{what-to-prove-smoothness-time}}{=}
    -((\V-\Vhat)',\qpntmo')_{L^2(\In;\mathcal X)}
    -(\jump{\Vpr}(\tnmo,\cdot),\qpntmo(\tnmo,\cdot))_{\mathcal X}
    \qquad \forall \qpntmo \in \Pbb_{\pnt-1}(\In;\mathcal X).
\end{split}
\]}\normalsize
\noindent \textbf{Proving~\eqref{Wihler-3} for~$\pnt$ larger than~$2$.}
A further integration by parts
and the smoothness of~$\V$ and~$\Vhat$
at the endpoints of each $\In$ entail
\begin{equation}\label{eq: L2 orthogonality}
 (\V-\Vhat,\qpntmo'')_{L^2(\In;\mathcal X)}
 = 0 \qquad \forall \qpntmo \in \Pbb_{\pnt-1}(\In;\mathcal X).
\end{equation}
For all $t$ in $\In$, we pick
\[
\qpntmo(t,\cdot) := \int_{\tnmo}^t \int_{\tnmo}^s
            \Pizpbftmth(\V-\Vhat)(r,\cdot) dr \ ds
        \qquad \text{in } \mathcal X
\]
and deduce
\begin{equation} \label{Paris-1}
(\V-\Vhat,
   \Pizpbftmth(\V-\Vhat))_{L^2(\In;\mathcal X)} = 0 .
\end{equation}
Using \cite[Theorem~3.11]{Schwab:1998} entails
\begin{equation}\label{Paris-2}
\Norm{(I-\Pizpbftmth)(\V-\Vhat)}_{L^2(\In;\mathcal X)}
\le \frac12\frac{\taun}{\sqrt{(\pnt-2)(\pnt-1)}}
    \Norm{(\V-\Vhat)'}_{L^2(\In;\mathcal X)}.
\end{equation}
Approximation properties as in \cite[Theorem~3.11]{Schwab:1998} imply
\[
\begin{split}
&\Norm{\V-\Vhat}_{L^2(\In;\mathcal X)}^2
 = (\V-\Vhat, \V-\Vhat)_{L^2(\In;\mathcal X)} \\
& \overset{\eqref{Paris-1}}{=}
    (\V-\Vhat,
    (I-\Pizpbftmth)(\V-\Vhat))_{L^2(\In;\mathcal X)}  \\
& \overset{\eqref{Paris-2}}{\le}
    \frac12\frac{\taun}{\sqrt{(\pnt-2)(\pnt-1)}}
    \Norm{\V-\Vhat}_{L^2(\In;\mathcal X)}
        \Norm{(\V-\Vhat)'}_{L^2(\In;\mathcal X)}.
\end{split}
\]
The first line in inequality~\eqref{Wihler-3}
follows recalling identity~\eqref{Wihler-1}.

\noindent \textbf{Proving~\eqref{Wihler-3} for~$\pnt$ equal to~$2$.}
The last display above modifies as follows:
the 1D Poincar\'e inequality in time implies
\[
\Norm{\V-\Vhat}_{L^2(\In;\mathcal X)}^2
\le \frac{\taun^2}{\pi^2}\Norm{(\V-\Vhat)'}_{L^2(\In;\mathcal X)}^2.
\]
The second line in inequality~\eqref{Wihler-3}
follows recalling identity~\eqref{Wihler-1}
with $\pnt=2$.

\end{proof}

\subsection{An error estimator for the semi-discrete in time method} \label{subsection:error-estimator-estimates}
We construct a computable error estimator
and prove fully explicit, reliable a posteriori error estimates.
We define~$\xi$ and~$m$ such that
\begin{equation} \label{choice-xi}
\Norm{(u-\Uhat)(\xi,\cdot)}_{0,\Omega}
= \Norm{u-\Uhat}_{L^{\infty}(0,T;L^2(\Omega))}
\qquad\qquad  \text{with } \xi \in \Im.
\end{equation}
We introduce
\begin{equation} \label{test-function-Baker}
\vB(t,\cdot)
= \int_t^\xi (u-\Uhat) (s,\cdot) ds
\qquad\qquad \text{in } \mathcal X.
\end{equation}
The function~$\vB$ belongs
to~$\mathcal C^1(0,T; L^2(\Omega))$.
In definition~\eqref{definition-Rcalppo}, we set
\begin{equation} \label{set-initial-condition}
    \Uhat'(0,\cdot) = \uone(\cdot)
    \qquad\qquad \text{in } L^2(\Omega).
\end{equation}
We state an auxiliary technical result.
\begin{lemma} \label{lemma:Trefethen}
Given~$\vB$ and~$\xi$ as in~\eqref{test-function-Baker} and~\eqref{choice-xi},
the following estimates hold true:
\begin{subequations} \label{L-infinity-norm-approximation}
\begin{align}
\inf_{\qpnt \in \Pbb_{\pnt}(\In;L^2(\Omega))}
\Norm{\vB -\qpnt}_{L^{\infty}(I_n;L^2(\Omega))}
&\leq \frac{\taun}{\pi} c_3(\pnt) \Norm{\vB^\prime}_{L^{\infty}(I_n;L^2(\Omega))}
\label{infty-1},\\
\Norm{\vB}_{L^{\infty}(t_{m-1},\xi;L^2(\Omega))}
&\leq \taum \Norm{\vB^\prime}_{L^{\infty}(t_{m-1},\xi;L^2(\Omega))}
     \label{infty-2},
\end{align}
\end{subequations}
where we have set,
also for future convenience,
\begin{equation} \label{cthpnt}
c_3(\pnt):=
\begin{cases}
\sqrt{\pi}          & \text{if } \pnt= 0,1,2\\
\frac{1}{\pnt-2}    & \text{if } \pnt\ge3,
\end{cases}
\qquad\qquad\qquad
c_3(\pnt) \approx (\pnt)^{-1}.
\end{equation}
\end{lemma}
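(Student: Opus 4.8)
The two bounds are of very different character, so I would handle them separately. Estimate~\eqref{infty-2} is elementary: by the definition~\eqref{test-function-Baker} we have $\vB(\xi,\cdot)=0$, so for every $t\in(t_{m-1},\xi)$ the fundamental theorem of calculus gives $\vB(t,\cdot)=-\int_t^\xi \vB'(s,\cdot)\,ds$, whence $\Norm{\vB(t,\cdot)}_{0,\Omega}\le(\xi-t)\Norm{\vB'}_{L^\infty(t_{m-1},\xi;L^2(\Omega))}$. Since $\xi\le t_m$ and $t>t_{m-1}$ we have $\xi-t\le\taum$, and taking the supremum over $t$ yields~\eqref{infty-2}.

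For~\eqref{infty-1} my plan is to reduce to the reference interval $(-1,1)$ via the affine change of variables, under which the best-approximation infimum is invariant while $\Norm{\vB'}$ scales by $\taun/2$; the claim then becomes a scale-free best-$L^\infty$-approximation estimate for a Lipschitz (that is, $W^{1,\infty}$) function in terms of the sup norm of its derivative, namely an error bounded by $\tfrac{2}{\pi}c_3(\pnt)\Norm{\vB'}$ on the reference interval. For the low-degree cases $\pnt\in\{0,1,2\}$ I would simply approximate $\vB$ by the constant $\vB(t^\ast,\cdot)$ for an optimally chosen point $t^\ast$, so that the error equals half the oscillation of $\vB$ on $\In$, bounded by $\tfrac12\,\taun\Norm{\vB'}_{L^\infty(\In;L^2(\Omega))}$; since $\tfrac12\le\tfrac{1}{\pi}\sqrt{\pi}$, this already delivers the stated constant $c_3(\pnt)=\sqrt{\pi}$.

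The substantive case is $\pnt\ge3$, where the target $\tfrac{\taun}{\pi(\pnt-2)}\Norm{\vB'}$ is a genuine first-order (Jackson-type) decay in $1/\pnt$. Here I would pass through the cosine substitution $x=\cos\theta$, turning algebraic best approximation of $\vB$ on $(-1,1)$ into trigonometric best approximation of the even, periodic map $\theta\mapsto\vB(\cos\theta,\cdot)$, and invoke a Favard/Jackson-type estimate for the first derivative; the $\mathcal X$-valued (i.e. $L^2(\Omega)$-valued) nature causes no difficulty, since every step is linear in $\vB$ and uses only the norm of $\mathcal X$. An equivalent route is to expand $\vB'$ in Chebyshev (or Legendre) polynomials and integrate the truncated remainder using the explicit antiderivative formula for $\int T_k$, employing the free additive constant in the choice of $\qpnt$ to exploit cancellation.

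I expect the explicit sharp constant to be the main obstacle. A coefficient-wise bound on the integrated Chebyshev/Legendre tail of $\vB'$ diverges (it produces a harmonic series), precisely because controlling only $\Norm{\vB'}_{L^\infty}$ gives no decay of the individual coefficients; the $1/\pnt$ gain therefore has to come from genuine cancellation in the minimax/partial-sum structure rather than from the triangle inequality. Pinning down the exact constant $\tfrac{1}{\pi(\pnt-2)}$, rather than the cruder constant $\tfrac{\pi}{4(\pnt+1)}$ obtained from a direct Favard substitution, is the delicate point, and I would anticipate needing either a weighted (Timan-type) pointwise estimate or a careful accounting of the integrated-Legendre remainder to reach it.
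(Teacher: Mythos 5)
Your proofs of \eqref{infty-2} and of the low-degree case of \eqref{infty-1} are correct. For \eqref{infty-2} you argue exactly as the paper does (the paper writes the same computation with Cauchy--Schwarz in time; your fundamental-theorem-of-calculus version is equivalent). For $\pnt\le 2$, your constant-approximant argument yields $\frac{\taun}{2}\Norm{\vB'}_{L^{\infty}(\In;L^2(\Omega))}$, which is below the stated bound $\frac{\taun}{\pi}\sqrt{\pi}\,\Norm{\vB'}_{L^{\infty}(\In;L^2(\Omega))}$, so it implies \eqref{infty-1} in that range; the paper instead obtains $c_3(\pnt)=\sqrt{\pi}$ from the 1D Sobolev inequality of \cite[eq.\ (1.3)]{Ilyin-Laptev-Loss-Zelik:2016} combined with the Poincar\'e inequality. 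One caution: justify your step through $\Norm{\vB(t,\cdot)-\vB(t_{\mathrm{mid}},\cdot)}_{0,\Omega}\le |t-t_{\mathrm{mid}}|\,\Norm{\vB'}_{L^{\infty}(\In;L^2(\Omega))}$ with $t_{\mathrm{mid}}$ the midpoint of $\In$, rather than through ``half the oscillation'': a Chebyshev-center/half-diameter argument is not available for $L^2(\Omega)$-valued functions, while the midpoint argument is.

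The genuine gap is the case $\pnt\ge 3$: your Jackson--Favard route (cosine substitution, then a linear approximation method, hence valid in the Bochner setting) gives the constant $\frac{\pi\taun}{4(\pnt+1)}$, i.e.\ $c_3(\pnt)=\frac{\pi^2}{4(\pnt+1)}$ in the paper's normalization, and you explicitly leave the stated constant $c_3(\pnt)=\frac{1}{\pnt-2}$ unproven; since $\frac{\pi}{4(\pnt+1)}>\frac{1}{\pi(\pnt-2)}$ for $\pnt\ge 5$, the lemma as written is not established by your argument. You should know, however, that this gap cannot be closed, and your own observations explain why. The paper's proof of this case is a bare citation of \cite[Theorem~7.2]{Trefethen-2019}, whose hypothesis is that $\vB'$ has bounded \emph{variation} $V$ (the conclusion being a bound of the form $\frac{2V}{\pi(n-1)}$); knowing only $\Norm{\vB'}_{L^{\infty}}$ this hypothesis is unavailable, and your harmonic-series remark is precisely the reason no coefficient-tail argument can substitute for it. In fact, the stated constant cannot hold for every function with bounded derivative, which is the only property of $\vB$ that either your argument or the paper's citation invokes: on the reference interval $[-1,1]$ the claim reads $\inf_{q\in\Pbb_{\pnt}}\Norm{\vB-q}_{L^\infty}\le\frac{2}{\pi(\pnt-2)}\Norm{\vB'}_{L^\infty}$, yet the Lipschitz sawtooth alternating between $\pm\frac{1}{\pnt+1}$ at the $\pnt+2$ equispaced points (tensorized with a fixed unit-norm element of $L^2(\Omega)$, and smoothed if one insists on $\mathcal C^1$) has best degree-$\pnt$ approximation error at least $\frac{1}{\pnt+1}$, because a closer approximant would change sign $\pnt+1$ times; and $\frac{1}{\pnt+1}>\frac{2}{\pi(\pnt-2)}$ once $\pnt\ge 8$ --- a regime that matters in a paper whose purpose is $p$-explicit estimates. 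So the defensible statement is the one you actually proved: \eqref{infty-1} with the Jackson constant, which still satisfies $c_3(\pnt)\approx(\pnt)^{-1}$ and therefore leaves the downstream a posteriori bounds intact up to a fixed multiplicative factor.
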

\begin{proof}
Inequality~\eqref{infty-1} in the case $\pnt$ larger than $2$
is the Bochner version of \cite[Theorem 7.2]{Trefethen-2019}
up to a scaling argument.
Inequality~\eqref{infty-1} in the case~$\pnt$
equal to~$0$, $1$, and~$2$
follows from \cite[eq. (1.3)]{Ilyin-Laptev-Loss-Zelik:2016}
and the 1D Poincar\'{e} inequality in time.
Inequality~\eqref{infty-2} follows from
the following computations:
\[
\begin{split}
&   \Norm{\vB}_{L^{\infty}(t_{m-1},\xi;L^2(\Omega))}
    = \sup_{t\in(\tmmo,\xi)}
    \Norm{\vB(t,\cdot)}_{L^2(\Omega)}
    = \sup_{t\in(\tmmo,\xi)}
    \Big( \int_\Omega \vert \vB(t,\xbf) \vert^2 d\xbf \Big)^\frac12 \\
&   = \sup_{t\in(\tmmo,\xi)}
    \Big( \int_\Omega \Big\vert \int_{t}^\xi 
            (u-\Uhat) (s,\xbf) ds \Big\vert^2 d\xbf \Big)^\frac12
    \le \sup_{t\in(\tmmo,\xi)}
    \Big( \int_\Omega \vert \xi-t \vert 
        \int_{t}^\xi 
            \vert (u-\Uhat) (s,\xbf) \vert^2 ds \ d\xbf \Big)^\frac12 \\
&   \le \taum^\frac12 \sup_{t\in(\tmmo,\xi)}
    \Big( \int_\Omega
        \int_{t}^\xi 
            \vert (u-\Uhat) (s,\xbf) \vert^2 ds \ d\xbf \Big)^\frac12
    = \taum^\frac12 \sup_{t\in(\tmmo,\xi)}
    \Big( \int_\Omega
        \int_{t}^\xi 
            \vert \vB' (s,\xbf) \vert^2 ds \ d\xbf \Big)^\frac12 \\
&   \le \taum \Norm{\vB^\prime}_{L^{\infty}(t_{m-1},\xi;L^2(\Omega))}.
\end{split}
\]
\end{proof}

\begin{proposition} \label{proposition:initial-reliability-estimate}
Let assumption~\eqref{consequence:convexity} hold true.
Consider~$u$ and $\U$ the solutions to~\eqref{weak-problem}
and~\eqref{Walkington:method-time-semi-discrete},
and the operator~$\Uhat$ in~\eqref{equivalent-definition-Pcalpt}.
Given~$\vB$, $\xi$ and~$m$, $c_2(\pnt)$, and~$c_3(\pnt)$
as in~\eqref{test-function-Baker}, \eqref{choice-xi},
\eqref{ctwpnt}, and~\eqref{cthpnt}, we have
\small{\begin{equation} \label{initial-reliability-estimate}
\begin{split}
&\Norm{u-\Uhat}_{L^{\infty}(0,T;L^2(\Omega))}
\\
 & \le 2\Bigg( \Big( \sum_{n=1}^{m-1}
\frac{\taun}{\pi} c_3(\pnt-1) \Norm{f-\Pizpbftmo f}_{L^1(\In; L^2(\Omega))}
+ \taum \Norm{f-\Pizpbftmo f}_{L^1(\Im; L^2(\Omega))}
\Big)\\
& \quad + \Big( \sum_{n=1}^{m-1}
    \frac{\taun}{\pi}  c_3(\pnt-1)
    \Norm{\Deltax( \U-  \Pizpbftmo  \U)}_{L^1(\In; L^2(\Omega))}
    +\taum \Norm{\Deltax( \U-  \Pizpbftmo  \U)}_{L^1(\Im; L^2(\Omega))} \Big) \\
& \quad +  \Big( \sum_{n=1}^{m-1}
        \frac{\taun^3}{\pi} c_2(\pnt) c_4(\pnt-3)
        \Norm{\jump{ \Deltax \U'}(\tnmo,\cdot)}_{0,\Omega}
    + \taum^3 c_2(\pmt)
        \Norm{\jump{\Deltax \U'}(\tmmo,\cdot)}_{0,\Omega} \Big)  \Bigg),
\end{split}
\end{equation}}\normalsize
with~$c_4(\pnt-3)$ defined as
\begin{equation} \label{c4}
c_4(\pnt-3):=
\begin{cases}
 \pi \frac{|t_m - t_{n-1}|}{\taun}       & \text{if } \pnt= 2\\
c_3(\pnt-3)    & \text{if } \pnt\ge3.
\end{cases}
\end{equation}
\end{proposition}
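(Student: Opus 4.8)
The plan is to run a coercivity (energy) argument driven by the Baker-type test function $\vB$ from~\eqref{test-function-Baker}, and then to absorb the resulting residual into the three computable groups of~\eqref{initial-reliability-estimate} by means of the approximation estimates in Lemma~\ref{lemma:Trefethen} and Lemma~\ref{lemma:properties-a-la-Wihler}. Write $e:=u-\Uhat$; by construction $\vB'=-e$ and $\vB(\xi,\cdot)=0$, while $\Uhat(0,\cdot)=\U(0,\cdot)=\uzero$ and $\Uhat'(0,\cdot)=\uone$ (the latter by~\eqref{set-initial-condition}), so that $e(0,\cdot)=0$ and $e'(0,\cdot)=0$. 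Using the strong equation $u''-\Deltax u=f$ (valid under the regularity of Proposition~\ref{proposition:regularity-Laplacian}) and setting $R:=f-\Uhat''+\Deltax\Uhat$ gives $e''-\Deltax e=R$; testing this against $\vB$ on $(0,\xi)$, integrating by parts once in time in $\int_0^\xi(e'',\vB)_{0,\Omega}\,dt$ (the endpoint contributions vanish because $\vB(\xi,\cdot)=0$ and $e'(0,\cdot)=0$), and using $\vB'=-e$ together with $\int_0^\xi a(e,\vB)\,dt=\frac12\SemiNorm{\vB(0,\cdot)}_{1,\Omega}^2$ yields the identity
\[
\frac12\Norm{e(\xi,\cdot)}_{0,\Omega}^2
+\frac12\SemiNorm{\vB(0,\cdot)}_{1,\Omega}^2
=\int_0^\xi (R,\vB)_{0,\Omega}\,dt .
\]
Since $\Norm{e(\xi,\cdot)}_{0,\Omega}=\Norm{u-\Uhat}_{L^\infty(0,T;L^2(\Omega))}$ by~\eqref{choice-xi}, discarding the nonnegative seminorm reduces the proof to bounding $\int_0^\xi(R,\vB)_{0,\Omega}\,dt$ by the bracket in~\eqref{initial-reliability-estimate} times $\Norm{u-\Uhat}_{L^\infty(0,T;L^2(\Omega))}$.

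Next I would identify the computable structure of $R$. Because $\Uhat$ restricts to a polynomial of degree $\pnt+1$ in time on each $\In$, one has $\Uhat''\in\Pbb_{\pnt-1}(\In;L^2(\Omega))$; testing~\eqref{Walkington:method-time-semi-discrete} with $\V\in\Pbb_{\pnt-1}(\In;\HozOmega)$, using $\int_{\In}a(\U,\V)\,dt=-(\Deltax\U,\V)_{L^2(\In;L^2(\Omega))}$ (legitimate under~\eqref{consequence:convexity}), the representation~\eqref{rhs:semidiscrete} of the right-hand side, and the defining relation~\eqref{definition-Rcalppo} of $\Uhat$ gives $\Pizpbftmo(\Uhat''-\Deltax\U)=\Pizpbftmo f$ on each $\In$. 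Hence $f-\Uhat''+\Deltax\U$ has vanishing $\Pizpbftmo$-projection, and since $\Deltax$ commutes with $\Pizpbftmo$ and $\Uhat''\in\Pbb_{\pnt-1}$ I may split
\[
R=(f-\Pizpbftmo f)+\Deltax(\U-\Pizpbftmo\U)+\Deltax(\Uhat-\U),
\]
where the first two summands are $L^2(\In)$-orthogonal to $\Pbb_{\pnt-1}$ on every interval.

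I would then estimate $\int_0^\xi(R,\vB)_{0,\Omega}\,dt$ summand by summand, splitting each integral into the full intervals $I_1,\dots,I_{m-1}$ and the partial interval $(\tmmo,\xi)$. On a full interval the orthogonality of the first two summands lets me subtract the best $\Pbb_{\pnt-1}$-approximation of $\vB$ and use $L^1$--$L^\infty$ duality; invoking~\eqref{infty-1} with degree $\pnt-1$ and $\Norm{\vB'}_{L^\infty(\In;L^2(\Omega))}=\Norm{e}_{L^\infty(\In;L^2(\Omega))}\le\Norm{u-\Uhat}_{L^\infty(0,T;L^2(\Omega))}$ produces the factors $\frac{\taun}{\pi}c_3(\pnt-1)$; on $(\tmmo,\xi)$, where $\vB$ is not polynomial, I use~\eqref{infty-2} directly, producing the factor $\taum$. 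For the jump summand I note that $\Deltax\Uhat$ is the reconstruction $\widehat{\Deltax\U}$ of $\Deltax\U$ (the spatial operator $\Deltax$ commutes with every time operation in~\eqref{definition-Rcalppo}, so that $\jump{(\Deltax\U)'}=\Deltax\jump{\U'}$), whence~\eqref{Wihler-3} gives $\Norm{\Deltax(\Uhat-\U)}_{L^2(\In;L^2(\Omega))}\le\taun^{3/2}c_2(\pnt)\Norm{\jump{\Deltax\U'}(\tnmo,\cdot)}_{0,\Omega}$. Combining this with $L^1$--$L^\infty$ duality and the orthogonality $\big(\Deltax(\Uhat-\U),\qpntmth\big)_{L^2(\In)}=0$ read off from~\eqref{eq: L2 orthogonality}, I subtract the best $\Pbb_{\pnt-3}$-approximation of $\vB$ and apply~\eqref{infty-1} with degree $\pnt-3$, yielding the factor $c_3(\pnt-3)$ when $\pnt\ge3$; for $\pnt=2$ no such orthogonality is available, so I bound $\Norm{\vB}_{L^\infty(\In;L^2(\Omega))}\le|t_m-t_{n-1}|\,\Norm{u-\Uhat}_{L^\infty(0,T;L^2(\Omega))}$ directly, which is precisely the case distinction encoded in $c_4(\pnt-3)$ in~\eqref{c4}. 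The partial interval is handled again with~\eqref{infty-2}, giving the term $\taum^3 c_2(\pmt)\Norm{\jump{\Deltax\U'}(\tmmo,\cdot)}_{0,\Omega}$.

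The main obstacle I anticipate is the asymmetric treatment of the last, partial interval $(\tmmo,\xi)$: there the residual orthogonality against $\Pbb_{\pnt-1}$ cannot be exploited (since $\vB$ is not piecewise polynomial on $(\tmmo,\xi)$), forcing the cruder bound~\eqref{infty-2} in place of~\eqref{infty-1} and explaining why the last summand in each of the three groups of~\eqref{initial-reliability-estimate} carries a plain $\taum$ (resp.\ $\taum^3$) instead of a degree-dependent factor. A second delicate point is the low-degree case $\pnt=2$ of the jump group, where the loss of the $\Pbb_{\pnt-3}$-orthogonality produces the geometric factor $|t_m-t_{n-1}|/\taun$ recorded in $c_4$, together with the justification that $\Deltax\Uhat=\widehat{\Deltax\U}$ under~\eqref{consequence:convexity}. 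Once all contributions are collected, the resulting inequality $\frac12\Norm{u-\Uhat}_{L^\infty(0,T;L^2(\Omega))}^2\le(\cdots)\,\Norm{u-\Uhat}_{L^\infty(0,T;L^2(\Omega))}$ is divided by $\Norm{u-\Uhat}_{L^\infty(0,T;L^2(\Omega))}$ (the estimate being trivial if this norm vanishes), which supplies the factor $2$ and gives exactly~\eqref{initial-reliability-estimate}.
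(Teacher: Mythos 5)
Your proposal is correct and follows essentially the same route as the paper's proof: the same Baker test-function energy identity with $e(0,\cdot)=e'(0,\cdot)=0$ and $\vB(\xi,\cdot)=0$, the same three-term residual decomposition obtained from the reconstruction identity $\Uhat''-\Pizpbftmo\Deltax\U=\Pizpbftmo f$, the same degree-$(\pnt-1)$ and degree-$(\pnt-3)$ orthogonality tricks combined with Lemmas~\ref{lemma:Trefethen} and~\ref{lemma:properties-a-la-Wihler}, the same separate treatment of the partial interval $(\tmmo,\xi)$ and of the case $\pnt=2$ via $c_4$, and the same final division producing the factor~$2$. The only (minor, welcome) difference is that you state explicitly the commutation $\Deltax\Uhat=\widehat{\Deltax\U}$ under~\eqref{consequence:convexity}, which the paper uses tacitly when applying~\eqref{Wihler-3} to $\Deltax\U$.
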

\begin{proof}
Let~$\Uhat$ be the reconstruction operator in~\eqref{definition-Rcalppo}.
Using~\eqref{Walkington:method-time-semi-discrete}, we have the identity
\[
\begin{split}
& (\Uhat'',\V)_{L^2(\In; L^2(\Omega))}
- (\Deltax \U, \V)_{L^2(\In; L^2(\Omega))} \\
& = (f, \V )_{L^2(\In; L^2(\Omega))} \qquad \forall \V  \in \Pbb_{\pnt-1}(\In; \HozOmega)
\quad \forall n=1,\dots,N.
\end{split}
\]
Recall the time semi-discrete right-hand side in~\eqref{rhs:semidiscrete}.
Due to~\eqref{consequence:convexity}, for all $n=1,\dots,N$,
we have the following identity in $L^2(\Omega)$
inside each time interval:
\begin{equation} \label{discrete-star-identity}
\Uhat'' - \Pizpbftmo \Deltax \U = \Pizpbftmo f.
\end{equation}
Using $\Deltax \Pizpbftmo \U= \Pizpbftmo \Deltax\U$,
and subtracting~\eqref{discrete-star-identity} to~\eqref{strong-problem},
we get the following identity in $L^2(\In;L^2(\Omega))$:
\begin{equation} \label{initial-identity-for-error-equation}
(u-\Uhat)''
- \Deltax (u- \Uhat)
= (f- \Pizpbftmo f)
+\Deltax( \Uhat-  \Pizpbftmo  \U) .
\end{equation}
Next, we derive an error equation
testing~\eqref{initial-identity-for-error-equation}
with a particular function mimicking that
proposed in~\cite[eq. (3.7)]{Baker:1976}, i.e.,
the function in~\eqref{test-function-Baker}.
It is immediate to check
\begin{equation} \label{derivative-test-Baker}
-\vB'(t,\cdot) = (u-\Uhat)(t,\cdot)
\end{equation}
and
\begin{equation} \label{v-xi=0}
    \vB(\xi,\cdot) = 0.
\end{equation}
Multiplying~\eqref{initial-identity-for-error-equation}
with the function in~\eqref{test-function-Baker},
integrating in space,
and integrating by parts,
we arrive at
the following identity in $L^2(0,T)$:
\[
\begin{split}
&((u-\Uhat)'',\vB)_{0,\Omega}
+ a(u-\Uhat,\vB)\\
& \qquad \qquad
= (f-\Pizpbftmo f, \vB)_{0,\Omega}
+(\Deltax( \Uhat-  \Pizpbftmo  \U), \vB)_{0,\Omega} .
\end{split}
\]
Using~\eqref{derivative-test-Baker}, we readily deduce
the following identity in $L^2(0,T)$:
\[
((u-\Uhat)'',\vB)_{0,\Omega} - a(\vB',\vB) = (f-\Pizpbftmo f, \vB)_{0,\Omega}
    +(\Deltax( \Uhat-  \Pizpbftmo  \U), \vB)_{0,\Omega}.
\]
The following trivial but crucial identity holds true:
given sufficiently smooth in time functions~$\aleph$ and~$\beth$,
we have
\[
(\aleph'', \beth)_{0,\Omega}
= (\aleph',\beth)'_{0,\Omega} - (\aleph', \beth')_{0,\Omega}
\qquad\qquad \text{ in } L^2(0,T).
\]
This identity and~\eqref{derivative-test-Baker} imply
\[
\begin{split}
&((u-\Uhat)',\vB)'_{0,\Omega}
+ ((u-\Uhat)',u-\Uhat)_{0,\Omega}
- a(\vB',\vB)\\
&\qquad \qquad
= (f-\Pizpbftmo f, \vB)_{0,\Omega}
    +(\Deltax( \Uhat-  \Pizpbftmo  \U), \vB)_{0,\Omega}
\qquad\qquad \text{ in } L^1(0,T).
\end{split}
\]\normalsize
Equivalently, we write
\[
\begin{split}
& ((u-\Uhat)',\vB)'_{0,\Omega}
+\frac12 (\Norm{u-\Uhat}_{0,\Omega}^2)'
- \frac12 a(\vB,\vB)' \\
& \qquad \qquad
= (f-\Pizpbftmo f, \vB)_{0,\Omega}
    +(\Deltax( \Uhat-  \Pizpbftmo  \U), \vB)_{0,\Omega}
    \qquad\qquad \text{ in } L^1(0,T).
\end{split}
\]
We integrate in time over $(0,\xi)$,
$\xi$ as in the choice of the test function in~\eqref{test-function-Baker}.
We arrive at
\[
\begin{split}
& \frac12 \Norm{(u-\Uhat)(\xi,\cdot)}_{0,\Omega}^2
 - \frac12 a(\vB(\xi,\cdot),\vB(\xi,\cdot))
= \frac12 \Norm{(u-\Uhat)(0,\cdot)}_{0,\Omega}^2
- \frac12 a(\vB(0,\cdot),\vB(0,\cdot))\\
& \qquad
    +  \int_{0}^\xi  \left((f-\Pizpbftmo f, \vB)_{0,\Omega}
    +(\Deltax( \Uhat-  \Pizpbftmo  \U), \vB)_{0,\Omega}
    - ((u-\Uhat)',\vB)'_{0,\Omega} \right) dt.
\end{split}
\]
Using~\eqref{definition-Rcalppo} and~\eqref{set-initial-condition},
we have $(u-\Uhat)'(0,\cdot)=0$, whence we write
\[
\begin{split}
& - \int_{0}^\xi  ((u-\Uhat)',\vB)'_{0,\Omega} dt \\
& = - ((u-\Uhat)'(\xi,\cdot),\vB(\xi,\cdot))_{0,\Omega}
    + ((u-\Uhat)'(0,\cdot),\vB(0,\cdot))_{0,\Omega}\\
& \overset{\eqref{v-xi=0}}{=}  ((u-\Uhat)'(0,\cdot),\vB(0,\cdot))_{0,\Omega}
    = 0.
\end{split}
\]
Combining the two above displays again with~\eqref{v-xi=0},
adding and subtracting $\U$ in the last term on the right-hand side,
and using that $(u-\Uhat)(0,\cdot)=0$ yield
\[
\begin{split}
& \frac12 \Norm{(u-\Uhat)(\xi,\cdot)}_{0,\Omega}^2
    +  \frac12 a(\vB(0,\cdot),\vB(0,\cdot))
     \\
& = \int_{0}^\xi  \left((f-\Pizpbftmo f, \vB)_{0,\Omega}
    + (\Deltax( \U-  \Pizpbftmo  \U), \vB)_{0,\Omega}
    + (\Deltax( \Uhat-  \U), \vB)_{0,\Omega} \right) dt.
\end{split}
\]
Let~$\qpntmo$ and~$\qpntmth$ realize~\eqref{infty-1}
of degree~$\pnt-1$ and~~$\pnt-3$ (with $\qpntmth=0$ for $\pnt=2$), respectively.
Using the properties of~$\Pizpbftmo$,
the orthogonal property~\eqref{eq: L2 orthogonality},
and the choice of~$\xi$ in~\eqref{choice-xi}
(including the fact that~$\xi$ lies in the time interval~$\Im$),
we end up with
\small{\begin{equation} \label{T1-T2-T3-T4-T5-T6}
\begin{split}
& \frac12 \Norm{u-\Uhat}^2_{L^{\infty}(0,T;L^2(\Omega))}
+  \frac12 a(\vB(0,\cdot),\vB(0,\cdot))
    \\
& \quad \leq  \sum_{n=1}^{m-1} \int_{I_n}  (f-\Pizpbftmo f, \vB - \qpntmo)_{0,\Omega} dt
        + \int_{t_{m-1}}^\xi  (f-\Pizpbftmo f, \vB )_{0,\Omega} dt\\
& \quad + \sum_{n=1}^{m-1} \int_{I_n} (\Deltax( \U- \Pizpbftmo  \U), \vB - \qpntmo)_{0,\Omega} dt
+ \int_{t_{m-1}}^\xi (\Deltax( \U - \Pizpbftmo \U), \vB)_{0,\Omega} dt \\
& \quad+ \sum_{n=1}^{m-1} \int_{I_n} (\Deltax( \Uhat-  \U),
        \vB - \qpntmth)_{0,\Omega}dt
+ \int_{t_{m-1}}^\xi (\Deltax( \Uhat-  \U), \vB )_{0,\Omega}dt = \sum_{i=1}^6 T_i.
\end{split}
\end{equation}}\normalsize
As for the the term~$T_1$,
we use H\"{o}lder's inequality,
inequality~\eqref{infty-1} (with constant~$c_3(\pnt-1)$),
and the fact that
$\Norm{\vB'}_{L^{\infty}(\In; L^2(\Omega))}$ is smaller than
$\Norm{\vB'}_{L^{\infty}(0,T; L^2(\Omega))}$,
and end up with
\begin{equation} \label{bound-T1}
\begin{split}
T_1
&  \le \sum_{n=1}^{m-1} \Norm{f-\Pizpbftmo f}_{L^1(\In; L^2(\Omega))}
       \Norm{\vB-\qpntmo}_{L^\infty(\In; L^2(\Omega))} \\
& \le \sum_{n=1}^{m-1} \frac{\taun }{\pi}c_3(\pnt-1)
            \Norm{f-\Pizpbftmo f}_{L^1(\In; L^2(\Omega))}
            \Norm{\vB'}_{L^{\infty}(\In; L^2(\Omega))} \\
& \le \Big(\sum_{n=1}^{m-1} \frac{\taun }{\pi}c_3(\pnt-1)
            \Norm{f-\Pizpbftmo f}_{L^1(\In; L^2(\Omega))}\Big)
            \Norm{\vB'}_{L^{\infty}(0,T; L^2(\Omega))}.
\end{split}
\end{equation}
Similarly, using \eqref{infty-2}
and the fact that $\Norm{\vB'}_{L^{\infty}(t_{m-1},\xi; L^2(\Omega))}$
is smaller than $\Norm{\vB'}_{L^{\infty}(0,T; L^2(\Omega))}$,
we also infer
the following bound on the term~$T_2$:
\begin{equation} \label{bound-T2}
\begin{split}
T_2
&  \le\Norm{f-\Pizpbftmo f}_{L^1(t_{m-1},\xi; L^2(\Omega))}
       \Norm{\vB}_{L^\infty(t_{m-1},\xi; L^2(\Omega))} \\
& \le \taum \Norm{f-\Pizpbftmo f}_{L^1(\Im; L^2(\Omega))}
                \Norm{\vB'}_{L^{\infty}(t_{m-1},\xi; L^2(\Omega))} \\
& \le \taum \Norm{f-\Pizpbftmo f}_{L^1(\Im; L^2(\Omega))}
             \Norm{\vB'}_{L^{\infty}(0,T; L^2(\Omega))}.
\end{split}
\end{equation}
Next, we bound the term~$T_3$
using again~\eqref{infty-1}
with constant $c_3(\pnt-1)$:
\begin{equation} \label{bound-T3}
\begin{split}
 T_3
 &\leq \sum_{n=1}^{m-1}  \Norm{\Deltax( \U-  \Pizpbftmo \U)}_{L^1(\In; L^2(\Omega))}   \Norm{\vB-\qpntmo}_{L^\infty(\In; L^2(\Omega))}  \\
&\leq \sum_{n=1}^{m-1}   \frac{\taun }{\pi} c_3(\pnt-1)
        \Norm{\Deltax( \U-  \Pizpbftmo \U)}_{L^1(\In; L^2(\Omega))}
        \Norm{\vB^{\prime}}_{L^\infty(\In; L^2(\Omega))}  \\
&\leq \Big(\sum_{n=1}^{m-1}   \frac{\taun}{\pi} c_3(\pnt-1)
        \Norm{\Deltax( \U-  \Pizpbftmo \U)}_{L^1(\In; L^2(\Omega))}\Big)
        \Norm{\vB^{\prime}}_{L^\infty(0,T; L^2(\Omega))}.
\end{split}
\end{equation}
We bound the term~$T_4$ based on~\eqref{infty-2}
and proceeding as in the bound of~$T_3$:
\begin{equation} \label{bound-T4}
\begin{split}
 T_4
 & \leq \Norm{\Deltax( \U- \Pizpbftmo \U)}_{L^1(t_{m-1},\xi; L^2(\Omega))}
    \Norm{\vB}_{L^\infty(t_{m-1},\xi; L^2(\Omega))} \\
& \leq \taum \Norm{\Deltax( \U- \Pizpbftmo \U)}_{L^1(t_{m-1},\xi; L^2(\Omega))}
            \Norm{\vB^{\prime}}_{L^\infty(t_{m-1},\xi; L^2(\Omega))}  \\
& = \taum \Norm{\Deltax( \U- \Pizpbftmo \U)}_{L^1(\Im; L^2(\Omega))}   \Norm{\vB^{\prime}}_{L^\infty(0,T; L^2(\Omega))}.
\end{split}
\end{equation}
Using estimates~\eqref{infty-1} with constant $c_3(\pnt-3)$
and~\eqref{Wihler-3} with constant $c_2(\pnt)$,
we show an upper bound on the term~$T_5$ for $\pnt\geq 3$:
\begin{equation} \label{bound-T5}
\begin{split}
 T_5 &\leq  \sum_{n=1}^{m-1}
      \tau_n^{\frac12}  \Norm{\Deltax( \Uhat-  \U)}_{L^2(I_n;L^2(\Omega))}
  \Norm{\vB-\qpntmth}_{L^\infty(\In; L^2(\Omega))}  \\
& \leq  \sum_{n=1}^{m-1}
  \frac{\taun^{\frac32}}{\pi} c_3(\pnt-3)  \Norm{\Deltax( \Uhat-  \U)}_{L^2(I_n;L^2(\Omega))}
  \Norm{\vB^{\prime}}_{L^\infty(\In; L^2(\Omega))}  \\
&  \le \left( \sum_{n=1}^{m-1}
    \frac{\taun^3}{\pi} c_2(\pnt)c_3(\pnt-3)
        \Norm{\jump{ \Deltax \U'}(\tnmo,\cdot)}_{0,\Omega} \right)
         \Norm{\vB^{\prime}}_{L^\infty(0,T; L^2(\Omega))}.
\end{split}
\end{equation}
For $\pnt=2$, using the definition of~$\vB$
and~$\xi$ in~\eqref{test-function-Baker} and~\eqref{choice-xi},
and the fact that $\vert \xi-\tnmo \vert$
is larger than $\vert \xi-\tn \vert$, we have
\[
\Norm{\vB}_{L^\infty(\In; L^2(\Omega))} \leq |\xi -t_{n-1}|\Norm{\vB'}_{L^\infty(\In; L^2(\Omega))} 
\leq |t_{m} -t_{n-1}|\Norm{\vB'}_{L^\infty(\In; L^2(\Omega))}.
\]
Using the definition of $c_4(\pnt-3)$ in~\eqref{c4},
we have the following bound on the term~$T_5$
for all~$\pnt$ larger than or equal to~$2$:
\begin{align*}
    T_5 & \leq  \sum_{n=1}^{m-1}
      \tau_n^{\frac12}  \Norm{\Deltax( \Uhat-  \U)}_{L^2(I_n;L^2(\Omega))}
  \Norm{\vB}_{L^\infty(\In; L^2(\Omega))}  \\
  &\leq \sum_{n=1}^{m-1}
 |t_m -t_{n-1}| \taun^{\frac12} \Norm{\Deltax( \Uhat-  \U)}_{L^2(I_n;L^2(\Omega))}
  \Norm{\vB^{\prime}}_{L^\infty(\In; L^2(\Omega))}  \\
  &  \le \left( \sum_{n=1}^{m-1}
    |t_m -t_{n-1}| \taun^2 c_2(\pnt)
        \Norm{\jump{ \Deltax \U'}(\tnmo,\cdot)}_{0,\Omega} \right)
         \Norm{\vB^{\prime}}_{L^\infty(0,T; L^2(\Omega))}.
\end{align*}
We bound the term~$T_6$
using estimates~\eqref{Wihler-3}
and~\eqref{infty-2}:
\begin{equation} \label{bound-T6}
\begin{split}
 T_6 &\leq
      \taum^{\frac12}  \Norm{\Deltax(\Uhat-U)}_{L^2(t_{m-1},\xi;L^2(\Omega))}
  \Norm{\vB}_{L^\infty(t_{m-1},\xi; L^2(\Omega))}  \\
& \leq
  \taum^{\frac32} \Norm{\Deltax(\Uhat-U)}_{L^2(\Im;L^2(\Omega))}
   \Norm{\vB^{\prime}}_{L^\infty(0,T; L^2(\Omega))}\\
&  \le   \taum^3 c_2(\pmt)
        \Norm{\jump{ \Deltax \U'}(\tmmo,\cdot)}_{0,\Omega}
         \Norm{\vB^{\prime}}_{L^\infty(0,T; L^2(\Omega))}.
\end{split}
\end{equation}
Finally, using the fact that $\Norm{\vB^{\prime}}_{L^\infty(0,T; L^2(\Omega))}$
is equal to $\Norm{u-\Uhat}_{L^{\infty}(0,T;L^2(\Omega))}$,
the assertion follows inserting~\eqref{bound-T1}, \eqref{bound-T2},
\eqref{bound-T3}, \eqref{bound-T4}, \eqref{bound-T5},
and~\eqref{bound-T6} in~\eqref{T1-T2-T3-T4-T5-T6}.
\end{proof}

\begin{remark}
With the notation as in the proof of Proposition~\ref{proposition:initial-reliability-estimate},
for the case $\pnt=2$, the parameter $c_{4}(\pnt-3)$ in~\eqref{c4}
scales as $\taun^{-1}$, which reduces by one order
the convergence rate of the term~$T_5$ compared to the case $\pnt\geq3$.
On the other hand, the convergence rate
for the term~$T_5$
for~$\pnt\geq3$ is one order higher
than that of the error $\Norm{u-\U}_{L^{\infty}(0,T;L^2(\Omega))}$.
Therefore, for~$\pnt=2$, the term~$T_5$ converges
with the same rate as that of the error
$\Norm{u-\U}_{L^{\infty}(0,T;L^2(\Omega))}$.
\end{remark}

The error measure on the left-hand side of~\eqref{initial-reliability-estimate}
contains the reconstruction operator~$\Uhat$,
which we do not want to compute in practice.
For this reason, we elaborate further.
Recall that~$m$ is as in~\eqref{choice-xi}.
Consider the time steps and time polynomial degree
distributions~$\taubold$ and~$\pbft$ as discussed
in Section~\ref{section:introduction};
$c_1(\pnt)$, $c_2(\pnt)$, $c_3(\pnt)$, and~$c_4(\pnt)$
as defined in~\eqref{copnt}, \eqref{ctwpnt}, \eqref{cthpnt}, and~\eqref{c4}.
Define
\[
\eta_{1}: =  \max_{n=1,\dots, N} \taun (c_1(\pnt)c_2(\pnt))^{\frac12}
                \Norm{\jump{\U'}(\tnmo,\cdot)}_{0,\Omega}.
\]
Given~$m$ as in~\eqref{choice-xi}, for all $ n=1, \dots,m-1$,
we also define
\footnotesize{\[
\eta_{2,n} : =
\begin{cases}
\frac{2}{\pi}  \Big( \taun c_3(\pnt-1)
        \Norm{\Deltax(\U-\Pizpbftmo  \U)}_{L^1(\In;L^2(\Omega))} \\
\qquad\qquad  + \taun^3 c_2(\pnt) c_4(\pnt-3)
        \Norm{\jump{ \Deltax \U'}(\tnmo,\cdot)}_{0,\Omega} \Big)
        & \text{if } n=1,\dots,m-1 \\
2 \Big( \taum \Norm{\Deltax( \U-  \Pizpbftmo  \U)}_{L^1(\Im; L^2(\Omega))}
     + c_2(\pnt)
       \taum^3 \Norm{\jump{ \Deltax \U'}(\tmmo,\cdot)}_{0,\Omega} \Big)     & \text{if } n=m,
\end{cases}
\]}\normalsize
and the data oscillation terms
\[
\oscnf :=
\begin{cases}
\frac{2\taun}{\pi} c_3(\pnt-1)
        \Norm{f-\Pizpbftmo f}_{L^1(\In; L^2(\Omega))}
        & \text{if } n=1,\dots,m-1 \\
2\taum \Norm{f-\Pizpbftmo f}_{L^1(\Im; L^2(\Omega))}
        & \text{if } n=m.
\end{cases}
\]
With this at hand, for $m$ as in~\eqref{choice-xi}, we introduce
\begin{equation} \label{eta2-oscf}
    \eta_{2}: =  \sum_{n=1}^m \eta_{2,n},
     \qquad\qquad
    \oscf: =  \sum_{n=1}^m \oscnf.
\end{equation}
We are now in a position to derive
a fully explicit, reliable a posteriori upper bound
for the $L^\infty(L^2)$ error
with respect to the error estimator
\begin{equation} \label{error-estimator}
    \eta:= \etao + \etatw.
\end{equation}

\begin{proposition} \label{proposition:reliability-estimate}
Let~$u$ and~$\U$ be the solutions to~\eqref{weak-problem}
and~\eqref{Walkington:method-time-semi-discrete};
$\eta$ as in~\eqref{error-estimator},
and $\oscf$ as in~\eqref{eta2-oscf}.
Then, the following bound holds true:
\begin{equation} \label{reliability:estimate}
\begin{split}
\Norm{u-\U}_{L^{\infty}(0,T;L^2(\Omega))} \le \eta  + \oscf.
\end{split}
\end{equation}
\end{proposition}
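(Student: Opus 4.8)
The plan is to compare~$u$ with the reconstruction~$\Uhat$, whose $L^\infty(L^2)$-distance from~$u$ is already controlled by Proposition~\ref{proposition:initial-reliability-estimate}, and then to pay the price of replacing the (uncomputable)~$\Uhat$ by~$\U$. Inserting~$\Uhat$ and using the triangle inequality gives
\[
\Norm{u-\U}_{L^{\infty}(0,T;L^2(\Omega))}
\le \Norm{u-\Uhat}_{L^{\infty}(0,T;L^2(\Omega))}
  + \Norm{\Uhat-\U}_{L^{\infty}(0,T;L^2(\Omega))} ,
\]
so that it suffices to bound the first summand by~$\etatw+\oscf$ and the second by~$\etao$.

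The first summand requires no new estimate: I would invoke Proposition~\ref{proposition:initial-reliability-estimate} and identify its right-hand side term by term with the definitions in~\eqref{eta2-oscf}. The outer factor~$2$ in~\eqref{initial-reliability-estimate} distributes over the three groups of summands; the ``source'' group reproduces precisely $\sum_{n=1}^m\oscnf=\oscf$, while the ``Laplacian'' and ``jump'' groups combine, interval by interval, into the two branches of~$\eta_{2,n}$ (carrying the weights $2\taun c_3(\pnt-1)/\pi$ and $2\taun^3 c_2(\pnt)c_4(\pnt-3)/\pi$ for $n<m$, and the modified weights without the $c_3$, $c_4$, $1/\pi$ factors for $n=m$). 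This bookkeeping yields $\Norm{u-\Uhat}_{L^{\infty}(0,T;L^2(\Omega))}\le\etatw+\oscf$.

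The heart of the argument is the second summand, where I would exploit the smoothness identities of Proposition~\ref{proposition:smoothness-reconstruction}. Since $\Uhat(\tnmo,\cdot)=\U(\tnmo,\cdot)$ and $\Uhat(\tn,\cdot)=\U(\tn,\cdot)$, the difference $w:=\U-\Uhat$ vanishes at \emph{both} endpoints of every~$\In$; extending $w$ by zero to all of~$\Rbb$ (which remains in $H^1$ precisely because of this double vanishing) and applying the sharp one-dimensional inequality \cite[eq. (1.3)]{Ilyin-Laptev-Loss-Zelik:2016} already invoked for~$\gimel_8$ gives, with constant one,
\[
\Norm{w}_{L^\infty(\In;L^2(\Omega))}^2
\le \Norm{w}_{L^2(\In;L^2(\Omega))}\,\Norm{w'}_{L^2(\In;L^2(\Omega))}
\qquad \forall n=1,\dots,N .
\]
Feeding in the exact identities~\eqref{Wihler-1} and~\eqref{Wihler-3} of Lemma~\ref{lemma:properties-a-la-Wihler} (with $\mathcal X=L^2(\Omega)$) for the two factors and simplifying produces
\[
\Norm{w}_{L^\infty(\In;L^2(\Omega))}^2
\le \taun^2\, c_1(\pnt)c_2(\pnt)\,\Norm{\jump{\U'}(\tnmo,\cdot)}_{0,\Omega}^2 ,
\]
so that, taking square roots and maximizing over~$n$, I obtain $\Norm{\Uhat-\U}_{L^\infty(0,T;L^2(\Omega))}\le\etao$. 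Combining the two summands and recalling~\eqref{error-estimator} gives the claim.

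I expect the main obstacle to be securing the second summand with the exact, constant-free scaling $\taun(c_1(\pnt)c_2(\pnt))^{\frac12}$ recorded in~$\etao$: this is legitimate only because the interpolation inequality is used in its \emph{sharp} both-endpoints form, which in turn rests essentially on the nodal continuity $\Uhat(\tn,\cdot)=\U(\tn,\cdot)$ established in Proposition~\ref{proposition:smoothness-reconstruction}. A one-sided version of the inequality would introduce a spurious multiplicative constant and spoil the fully explicit nature of the estimator.
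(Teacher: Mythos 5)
Your proposal is correct and follows essentially the same route as the paper's proof: the triangle inequality through~$\Uhat$, Proposition~\ref{proposition:initial-reliability-estimate} (whose right-hand side is indeed exactly $\etatw+\oscf$) for the term $u-\Uhat$, and the sharp 1D interpolation inequality of \cite[eq. (1.3)]{Ilyin-Laptev-Loss-Zelik:2016} combined with identities~\eqref{Wihler-1} and~\eqref{Wihler-3} to bound $\Norm{\U-\Uhat}_{L^\infty(0,T;L^2(\Omega))}$ by~$\etao$. The only cosmetic difference is that the paper first localizes the $L^\infty$ norm on the interval~$\Imtilde$ where the maximum is attained and then maximizes over~$n$, whereas you argue interval by interval; your explicit remark that the constant-one form of the interpolation inequality is legitimate because $\U-\Uhat$ vanishes at both endpoints of each~$\In$ (Proposition~\ref{proposition:smoothness-reconstruction}) is a justification the paper leaves implicit.
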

\begin{proof}
The triangle inequality implies
\[
\Norm{u-\U}_{L^{\infty}(0,T; L^2(\Omega))}
\le  \Norm{u-\Uhat}_{L^{\infty}(0,T; L^2(\Omega))}
 +  \Norm{\U-\Uhat}_{L^{\infty}(0,T; L^2(\Omega))}.
\]
Next, we pick~$\mtilde=1,\dots,N$ and~$\widetilde{\xi}$ such that
\[
\Norm{(\U-\Uhat)(\widetilde{\xi},\cdot)}_{0,\Omega}
= \Norm{\U-\Uhat}_{L^{\infty}(0,T;L^2(\Omega))}
\qquad\qquad  \text{with } \widetilde{\xi} \in \Imtilde.
\]
An upper bound for the first term in the maximum
is a consequence of~\eqref{initial-reliability-estimate}.
As for the second term, we use
the (1D in time) Sobolev embedding $H^1 \hookrightarrow L^{\infty}$,
see, e.g., \cite[eq. (1.3)]{Ilyin-Laptev-Loss-Zelik:2016},
and observe
\[
\begin{split}
\Norm{\U-\Uhat}_{L^{\infty}(0,T; L^2(\Omega))}^2
&\le \Norm{\U-\Uhat}_{L^2( \Imtilde; L^2(\Omega))}
        \Norm{(\U-\Uhat)'}_{L^2( \Imtilde; L^2(\Omega))}\\
& \overset{\eqref{Wihler-1}, \eqref{Wihler-3}}{\le}
 \tau_{\mtilde}^2 c_1(\pmtildet)c_2(\pmtildet)
     \Norm{\jump{\U'}(\tmmotilde,\cdot)}^2_{0,\Omega}.
\end{split}
\]
As~$\mtilde$ and~$\widetilde{\xi}$ cannot be determined a priori,
we further elaborate the above bound as follows:
\[
\begin{split}
\Norm{\U-\Uhat}_{L^{\infty}(0,T; L^2(\Omega))}^2
&\le
   \max_{n=1,\dots, N} \taun^2 c_1(\pnt)c_2(\pnt)
     \Norm{\jump{\U'}(\tnmo,\cdot)}^2_{0,\Omega}.
\end{split}
\]
The assertion follows combining the above displays.
\end{proof}

\begin{remark} \label{remark:negative-norms}
Property~\eqref{consequence:convexity} is crucial in deriving bounds
\eqref{reliability:estimate}. 
In principle, we may also assume the weaker requirement
that $\Deltax \U$ belongs to $L^2(0,T; H^{-1}(\Omega))$
but end up with an error estimator involving a negative norm,
which is harder to realize in practice than the $L^2(0,T;L^2(\Omega))$ norm.
\end{remark}

\section{Numerical experiments}\label{section:numerical-experiments}
We assess numerically the a priori and a posteriori
error estimates proven in
Theorem~\ref{theorem:h-p-convergence}
and Corollary~\ref{corollary:h-p-convergence-alpha=1},
and Proposition~\ref{proposition:reliability-estimate}.
The numerical experiments are conducted with the
\texttt{Gridap.jl} library~\cite{VerBa:22} in the
\texttt{Julia} programming language.

\paragraph*{Spatial and time discretization.}
In what follows, we always consider the spatial domain
$\Omega=(-1,1)^2$ partitioned into uniform
tensor-product meshes;
we use tensor Lagrangian (equidistributed) nodal basis functions.
As for the time discretization,
we take the Lagrangian basis functions
in time for simplicity.
Unless otherwise specified, $T=1$;
other values will be considered and detailed at the appropriate points.

\paragraph*{Error measures.}
We are interested in investigating
different error measures for the a priori and a posteriori error estimates.

Given~$u$ and~$\uh$ the solutions
to~\eqref{weak-problem} and~\eqref{Walkington:method},
and~$\eh:=u-\uh$,
we consider the two error measures appearing
on the left-hand side of~\eqref{full-error-a-priori} separately:
\begin{equation} \label{error-measure-apriori-1}
\max_{n=1}^N \Norm{\eh'}_{L^{\infty}(\In; L^2(\Omega))},
\qquad\qquad\qquad\qquad
\max_{n=1}^N \SemiNorm{\eh}_{L^\infty(\In; H^1(\Omega))} .
\end{equation}
We also consider the errors
\begin{equation} \label{error-measure-apriori-2}
\SemiNorm{\eh}_{L^2(0,T;H^1(\Omega))},
\qquad\qquad
\Norm{\eh'}_{L^2(0,T;L^2(\Omega))},
\qquad\qquad
\Norm{\eh}_{L^\infty(0,T;L^2(\Omega))},
\end{equation}
and the jump error
\begin{equation} \label{error-measure-a-priori-3}
    \Big(\sum_{n=1}^N \Norm{\jump{\eh'}(\tnmo,\cdot)}_{0,\Omega}^2 \Big)^{\frac12}.
\end{equation}
The $L^2$-type norms in time are computed
with Gau\ss-Legendre quadrature formulas of order~$2\pnt+3$;
the $L^\infty$-type norms in time are evaluated
at~$2\pnt+3$ equally distributed nodes in time for each time interval;
the spatial norms are computed by using
tensor product Gau\ss-Legendre
quadrature formulas of order~$2\px+3$.
\medskip

\paragraph*{Imposing Dirichlet boundary conditions.}
We consider homogeneous Dirichlet boundary conditions,
which are imposed strongly at the boundary degrees of freedom.
The case of inhomogeneous boundary conditions
is discussed in \cite[Sections 3 and 5.3]{Walkington:2014}.

\subsection{Uniform refinements}
We investigate the convergence rate
of the fully-discrete method~\eqref{Walkington:method}
under uniform time step
and polynomial degree in time refinements
in various norms for three tests.

The initial and boundary conditions, and the right-hand side
of~\eqref{strong-problem}
are computed accordingly to the explicit formula of the
different solutions we pick below.

\subsubsection{Uniform refinements: test case 1}
We consider the analytic solution
\begin{equation} \label{test-case:1}
    u(x,y,t) := (1-x^2)(1-y^2) \cos(4t).
\end{equation}
As for the spatial discretization, we fix~$\px=2$ and~$h=0.4$.
Since the exact solution is a quadratic polynomial in space
and we use a quadratic nodal tensor product basis,
up to machine precision, the spatial error is zero.

\paragraph*{Uniform time step refinements.}
For $\pnt$ in $\{2,3\}$,
we pick $\taubold$ in $\{2\times 10^{-1}, 10^{-1}, 5\times 10^{-2}, 2.5\times 10^{-2}, 1.25 \times 10^{-2} \}$;
for $\pnt$ in $\{4,5\}$, we pick
$\taubold$ in $\{2\times 10^{-1}, 1.25\times 10^{-1}, 9.09\times 10^{-2}, 7.15\times 10^{-2}, 5.88 \times 10^{-2} \}$.
We display the errors in~\eqref{error-measure-apriori-1},
\eqref{error-measure-apriori-2}, and~\eqref{error-measure-a-priori-3}
in Figure~\ref{fig:error_plot_tau_refine_case_1}.

\begin{figure}[htb]
    \centering
    \includegraphics[width=0.48\linewidth]{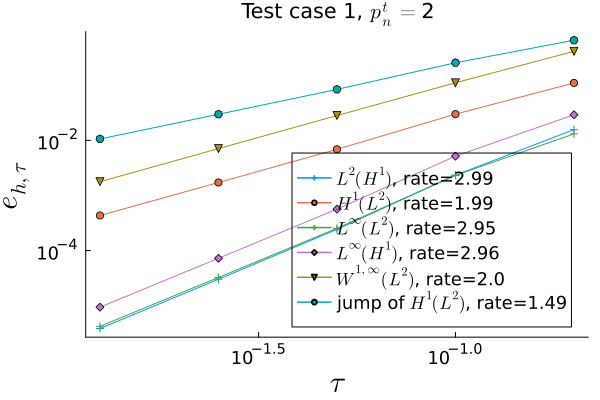}
    \includegraphics[width=0.48\linewidth]{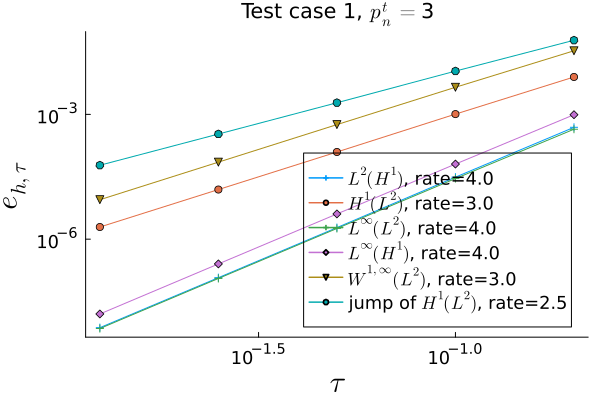}
    \includegraphics[width=0.48\linewidth]{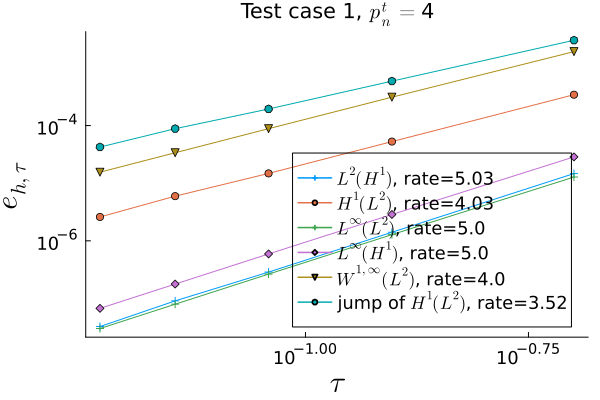}
    \includegraphics[width=0.48\linewidth]{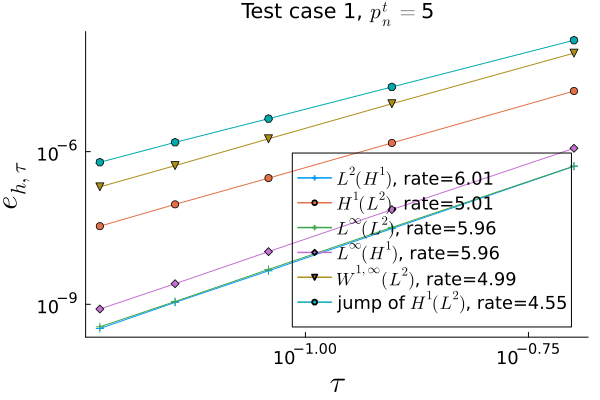}
    \caption{Exact solution as in~\eqref{test-case:1},
    uniform $\taubold$-refinement.} \label{fig:error_plot_tau_refine_case_1}
\end{figure}

The expected optimal convergence rates
as discussed in Corollary \ref{corollary:h-p-convergence-alpha=1}
are observed for the errors in~\eqref{error-measure-apriori-1};
the same convergence rate is achieved by the other error measures,
which is not covered by the theoretical results
from Section~\ref{section:a-priori}.
In particular, the error in~\eqref{error-measure-a-priori-3}
has order $\mathcal O(\tau^{\pnt-\frac12})$;
see also Remark~\ref{remark:jumps-in-error} for further comments on this point.

\paragraph*{Uniform polynomial degree in time refinements.}
We pick $\pnt$ in $\{2,3,4,5,6\}$
and $\taubold$ in $\{2\times 10^{-1},10^{-1}\}$.
We display the errors in~\eqref{error-measure-apriori-1}
\eqref{error-measure-apriori-2}, and~\eqref{error-measure-a-priori-3}
in Figure~\ref{fig:error_plot_p_refine_case_1}.

\begin{figure}[htb]
    \centering
    \includegraphics[width=0.48\linewidth]{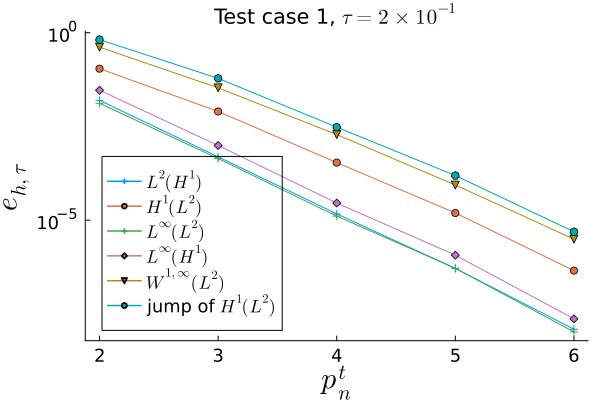}
    \includegraphics[width=0.48\linewidth]{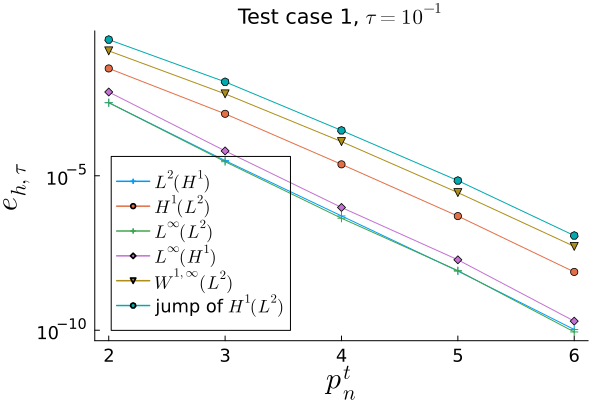}
    \caption{Exact solution as in~\eqref{test-case:1},
    uniform $\pnt$-refinement.} \label{fig:error_plot_p_refine_case_1}
\end{figure}

We observe exponential convergence rate for all the errors.
Even though this is not covered by the results in Section~\ref{section:a-priori},
we can expect this behaviour from the smoothness of the function in~\eqref{test-case:1}
and standard $\p$-FEM techniques~\cite{Schwab:1998}.

\subsubsection{Uniform refinements: test case 2} \label{Numerics singular solution}
We consider analytic in space solutions
\begin{equation} \label{test-case:2}
    u(x,y,t) := (1-x^2)(1-y^2)t^\alpha, \qquad \alpha>1.5,
\end{equation}
which belong to $H^{\alpha+\frac12}(0,T;\mathcal C^\infty(\Omega))$.
We fix~$\px=2$ and~$h=0.4$.

\paragraph*{Uniform time step refinements.}
We pick~$\pnt=2$ and
$\taubold$ in $\{2\times 10^{-1}, 10^{-1}, 5\times 10^{-2}, 2.5\times 10^{-2}, 1.25 \times 10^{-2},6.13 \times 10^{-3}, 3.06\times 10^{-3}, 1.53\times 10^{-3} \}$
for $\alpha=1.75$ and
$\taubold$ in $\{2\times 10^{-1}, 10^{-1}, 5\times 10^{-2}, 2.5\times 10^{-2}, 1.25 \times 10^{-2} \}$
for $\alpha=2.5$.
We do not consider higher polynomial degrees in time,
since they deliver the same convergence rates.
The results are exhibited in Figure \ref{fig:error_plot_h_refine_case_2}.

\begin{figure}[htb]
    \centering
    \includegraphics[width=0.48\linewidth]{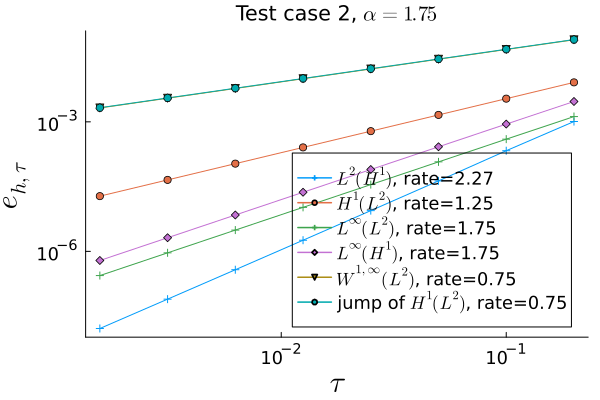}
    \includegraphics[width=0.48\linewidth]{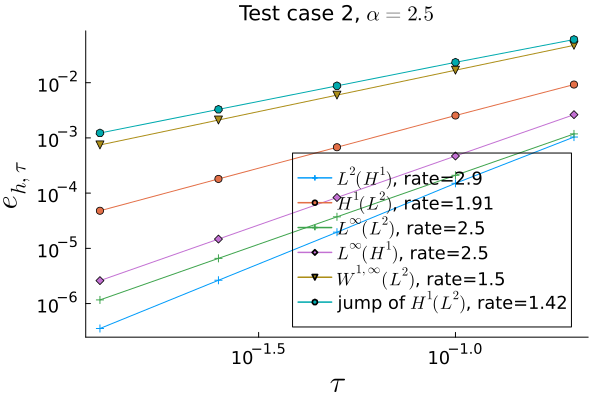}
    \caption{Exact solution as in~\eqref{test-case:2},
    uniform $\taubold$-refinement.} \label{fig:error_plot_h_refine_case_2}
\end{figure}

We observe optimal convergence rates for
the errors in~\eqref{error-measure-apriori-1}
as dictated by Corollary~\ref{corollary:h-p-convergence-alpha=1};
similar rates are achieved by the error measures
in~\eqref{error-measure-apriori-2}.
The error measured in the $W^{1,\infty}(L^2)$-seminorm
confirms estimate~\eqref{h-p-convergence-alpha=1:singular};
that seminorm converges with the same rate
of the error in~\eqref{error-measure-a-priori-3}.

\paragraph*{Uniform polynomial degree in time refinements.}
We pick $\pnt$ in $\{2,...,10\}$ and $\taubold = 0.2$.
The results are displayed in Figure~\ref{fig:error_plot_p_refine_case_2}.

\begin{figure}[htb]
    \centering
    \includegraphics[width=0.48\linewidth]{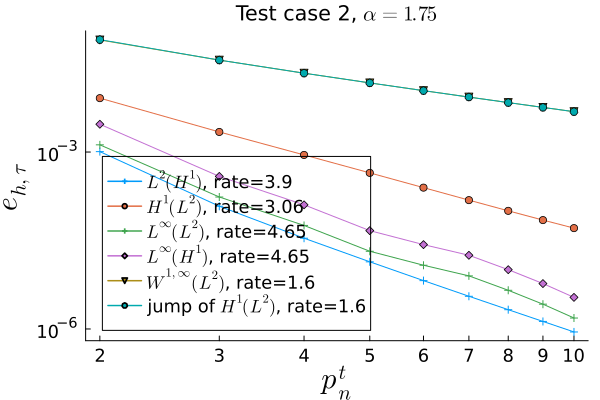}
    \includegraphics[width=0.48\linewidth]{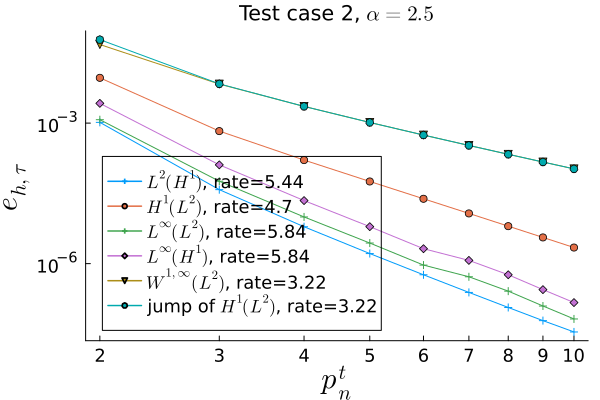}
    \caption{Exact solution as in~\eqref{test-case:2}, $\pnt$-refinement.} \label{fig:error_plot_p_refine_case_2}
\end{figure}

We observe doubling order convergence rate in $\pnt$,
which is standard in $\p$-FEM while approximating
functions with growth of $t^\alpha$ type \cite[Section 3.3.5]{Schwab:1998},
for the jump of the $H^1(L^2)$-seminorm,
whereas other quantities display a super-convergence phenomenon. 

\subsubsection{Simultaneous space--time uniform refinements: test case 3}
We consider the analytic solution 
\begin{equation} \label{test-case:3}
    u(x,y,t) := \sin(\pi n x)\sin(\pi m y)\cos(\omega \pi t),
\end{equation}
with $m=n=1$, $\omega = \sqrt{2}$,
and pick $\px=\pnt+1$ and $h=\taubold$, i.e.,
we are interested in simultaneous space and time refinements.
On the other hand, for $\pnt\in \{2,3\}$,
we choose $\taubold$ in $\{2\times 10^{-1}, 10^{-1}, 5\times 10^{-2}, 2.5\times 10^{-2}, 1.25 \times 10^{-2} \}$;
for $\pnt=4$, we choose $\taubold$ in
$\{2\times 10^{-1},10^{-1},6.67\times 10^{-2},5\times 10^{-2},4\times 10^{-2}\}$;
for $\pnt=5$, we choose
$\taubold$ in $\{5\times 10^{-1},3.33\times 10^{-1},2.5\times 10^{-1},2\times 10^{-1},1.67\times 10^{-1}\}$.
The results are displayed in Figure~\ref{fig:error_plot_tau_refine_case_3}.

\begin{figure}[htb]
    \centering
    \includegraphics[width=0.48\linewidth]{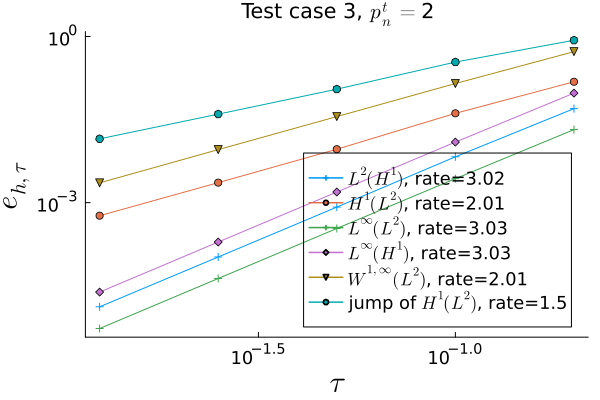}
    \includegraphics[width=0.48\linewidth]{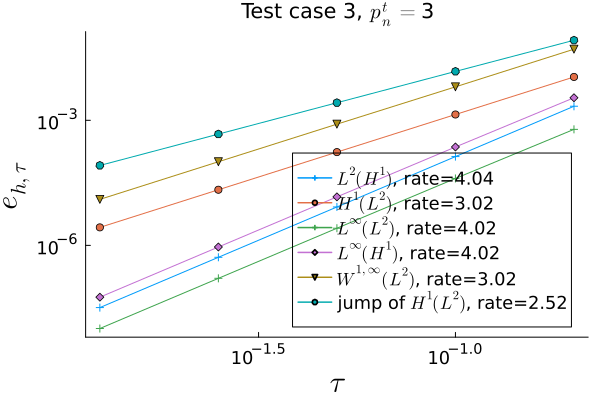}
    \includegraphics[width=0.48\linewidth]{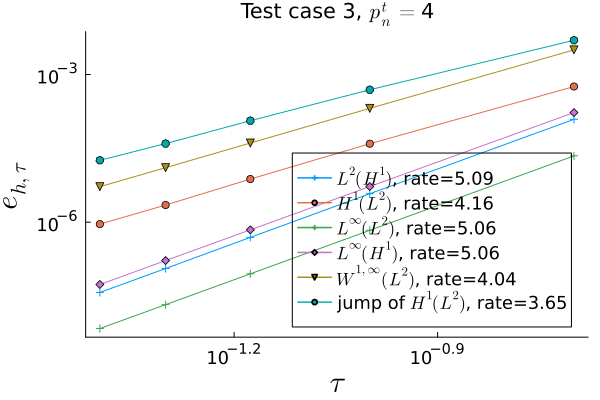}
    \includegraphics[width=0.48\linewidth]{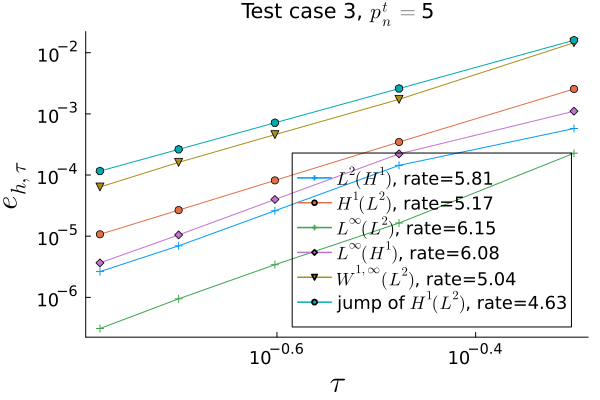}
    \caption{Exact solution as in~\eqref{test-case:3},  $\taubold$-refinement.} \label{fig:error_plot_tau_refine_case_3}
\end{figure}

We observe optimal convergence rates for the errors
\eqref{error-measure-apriori-1} and~\eqref{error-measure-apriori-2}.

\subsubsection{Long time behaviour: test case 3}
Here, we investigate the influence of the final time $T$ on the errors and estimators
for the test case 3 with exact solution in~\eqref{test-case:3}
with $m=n=1$ and $\omega=\sqrt{2}$.
In particular, we take $T=6,8,10$, $\px=\pnt=2,3$, $h=2\taubold=0.4$.
In Table~\ref{tab:error_estimator_long_time}, we report the errors and estimators
in the selected norms. We present only a few norms compared to the previous tests,
since all the $L^\infty$-in-time type norms
have similar behaviour to the $L^\infty(L^2)$-norm,
and all $L^2$-in-time type norms have similar behaviour to the $L^2(H^1)$-norm.

\begin{table}[h]
\caption{Exact solution as in~\eqref{test-case:3}
with $m=n=1$ and $\omega=\sqrt{2}$, long time behaviour.}
    \label{tab:error_estimator_long_time}
\begin{tabular}{|c|cccc|}
\hline
error (rate) & \multicolumn{4}{c|}{$p=q=2$}                                                                                                         \\ \hline
$T$          & \multicolumn{1}{c|}{$L^\infty(L^2)$} & \multicolumn{1}{c|}{$L^2(H^1)$}     & \multicolumn{1}{c|}{jump of $H^1(L^2$)} & $\eta$        \\ \hline
6            & \multicolumn{1}{c|}{1.12e-1}         & \multicolumn{1}{c|}{5.41e-1}        & \multicolumn{1}{c|}{2.06e0}             & 3.16e0        \\ \hline
8            & \multicolumn{1}{c|}{1.46e-1 (0.93)}  & \multicolumn{1}{c|}{8.24e-1 (1.46)} & \multicolumn{1}{c|}{2.32e0 (0.42)}      & 4.18e0 (0.97) \\ \hline
10           & \multicolumn{1}{c|}{1.84e-1 (1.01)}  & \multicolumn{1}{c|}{1.14e0 (1.47)}  & \multicolumn{1}{c|}{2.58e0 (0.47)}      & 5.23e0 (1.00) \\ \hline
\end{tabular}
\begin{tabular}{|c|cccc|}
\hline
error (rate) & \multicolumn{4}{c|}{$p=q=3$}                                                                                                          \\ \hline
$T$          & \multicolumn{1}{c|}{$L^\infty(L^2)$} & \multicolumn{1}{c|}{$L^2(H^1)$}     & \multicolumn{1}{c|}{jump of $H^1(L^2$)} & $\eta$         \\ \hline
6            & \multicolumn{1}{c|}{1.32e-3}         & \multicolumn{1}{c|}{7.96e-3}        & \multicolumn{1}{c|}{1.97e-1}            & 5.79e-2        \\ \hline
8            & \multicolumn{1}{c|}{1.69e-3 (0.85)}  & \multicolumn{1}{c|}{1.03e-2 (0.89)} & \multicolumn{1}{c|}{2.29e-1 (0.52)}     & 7.78e-2 (1.03) \\ \hline
10           & \multicolumn{1}{c|}{2.07e-3(0.89)}   & \multicolumn{1}{c|}{1.30e-2 (1.04)} & \multicolumn{1}{c|}{2.58e-1 (0.52)}     & 9.94e-2 (1.09) \\ \hline
\end{tabular}
\end{table}

From Table~\ref{tab:error_estimator_long_time},
we observe a linear dependence on the final time for the $L^\infty(L^2)$-norm
and the estimator~$\eta$;
the $L^2$-in-time type norms have slightly worse dependence
for $p=q=2$; but the same behaviour for $p=q=3$;
the jump error has half an order dependence.

\subsubsection{Higher oscillating modes: test case 3}
Here we investigate the influence on the performance of the scheme
in presence higher oscillating modes in the exact solution.
We still consider the test case~3 with exact solution in~\eqref{test-case:3}
with parameters $m=n=10$ and $\omega=10\sqrt{2}$.
We fix $T=1$, $\pnt=\px$ in $\{2,3,4,5\}$, with $h=\taubold$ in $\{ 2\times 10^{-1}, 10^{-1}, 6.67\times 10^{-2}, 5\times 10^{-2}, 4\times 10^{-2}, 3.33\times 10^{-2}, 2.86\times 10^{-2}\}$.
In Figure~\ref{fig:robust_test}, we display the errors in the $L^\infty(L^2)$-norm;
other norms mentioned in the above sections have also been tested
and omitted here for brevity since they have a similar behaviour
to the $L^\infty(L^2)$-norm.

\begin{figure}[htb]
    \centering
    \includegraphics[width=0.5\linewidth]{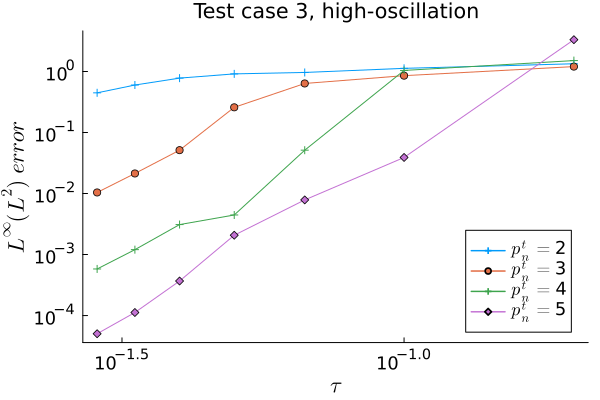}
    \caption{Exact solution as in \eqref{test-case:3}
    with parameters $m=n=10$ and $\omega=10\sqrt{2}$: polynomial degrees VS errors.}
    \label{fig:robust_test}
\end{figure}

From Figure~\ref{fig:robust_test}, we observe that higher polynomial degrees
reduce the pre-asymptotic regime.

\subsection{Efficiency of the error estimator for the semi-discrete in time scheme} \label{subsection:nr-efficiency}
Henceforth, the semi-discrete
formulation~\eqref{Walkington:method-time-semi-discrete} is considered.
We check the efficiency of the error estimator~$\eta$ in~\eqref{error-estimator}
(with $m=N$ and $\xi=t_N$ in~\eqref{eta2-oscf}),
compared to the error measured in the $L^\infty(0,T;L^2(\Omega))$ norm.
In particular, we investigate the behaviour
of the effectivity index
\begin{equation} \label{effectivity-indices}
\kappa:= \frac{\eta}{\Norm{\eh}_{L^\infty(0,T;L^2(\Omega))}}.
\end{equation}
We focus on the test cases with exact solutions
in~\eqref{test-case:1} and~\eqref{test-case:2},
and consider here uniform time steps and polynomial in time refinements.

In Figure~\ref{fig:estimator_tau_refine},
we present the results we obtained under uniform time steps refinements.
For the test case with exact solution as in~\eqref{test-case:1},
we pick $\pnt$ in $\{2,3,4\}$;
for the test case with exact solution as in~\eqref{test-case:2},
we pick $\pnt=2$, $\alpha=1.75$.
For the test case with exact solution as in~\eqref{test-case:3},
we pick $m=n=10$, $\omega=10\sqrt{2}$, $\px=4$, $h=1.33\times 10^{-1}$
and $\pnt$ in $\{2,3,4\}$.

\begin{figure}[htb]
    \centering
    \includegraphics[width=0.48\linewidth]{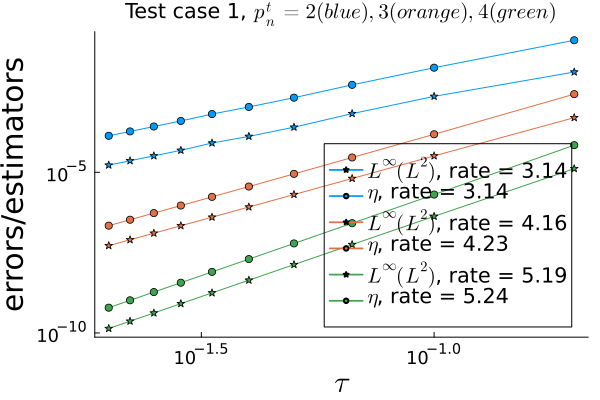}
    \includegraphics[width=0.48\linewidth]{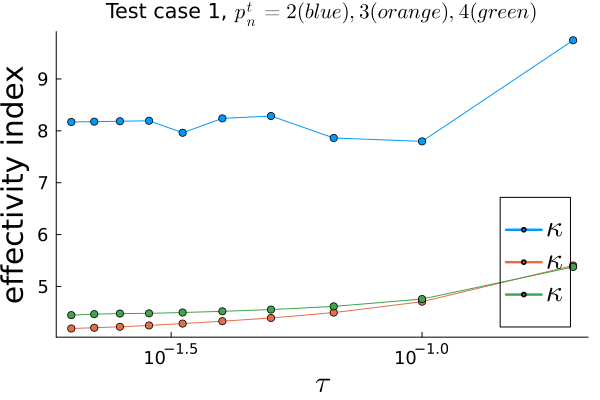}
    \includegraphics[width=0.48\linewidth]{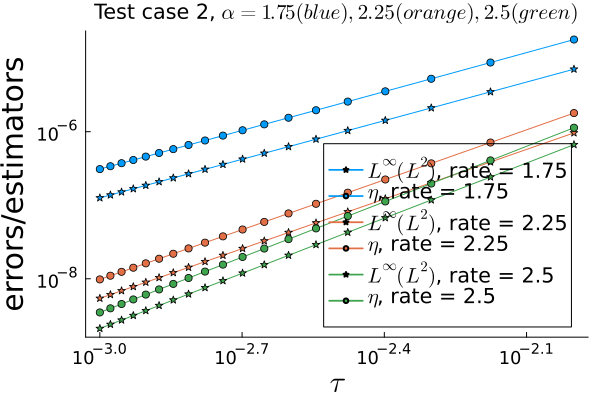}
    \includegraphics[width=0.48\linewidth]{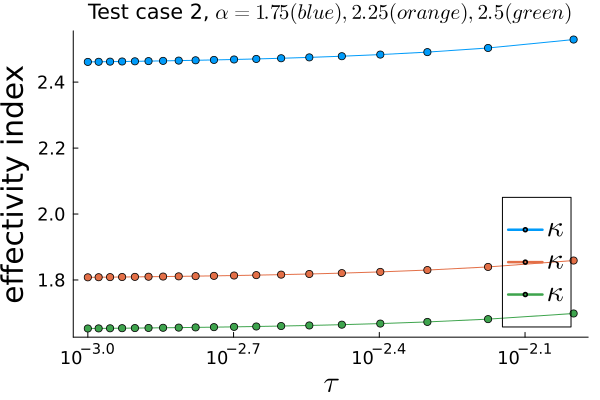}
    \includegraphics[width=0.48\linewidth]{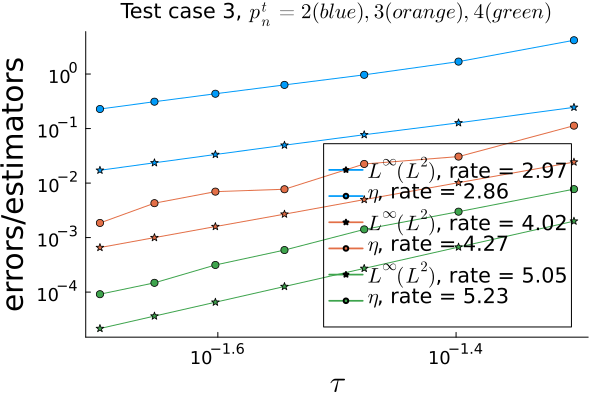}
    \includegraphics[width=0.48\linewidth]{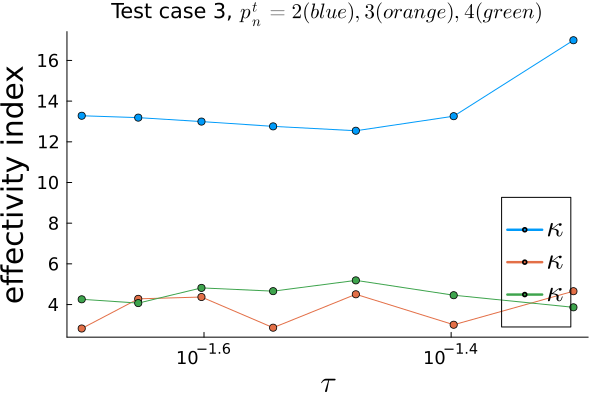}
    \caption{Exact solution as in~\eqref{test-case:1}, \eqref{test-case:2}, and~\eqref{test-case:3}
    uniform $\taubold$-refinement.}
    \label{fig:estimator_tau_refine}
\end{figure}

The estimator has the optimal convergence rate
as the error measured in the $L^\infty(0,T;L^2(\Omega))$ norm.
Notably, the effectivity index in~\eqref{effectivity-indices}
seems stable with respect to~$\taubold$,
i.e., is uniformly bounded by a constant with respect to~$\taubold$.
\medskip

Then, in Figure \ref{fig:estimator_pnt_refine}, the same tests are investigated
with $N=5$, i.e., for a fixed $\tau=0.2$
under $\pnt$-refinements in time;
for the test case with exact solution in~\eqref{test-case:2}
we only consider~$\alpha=1.75$.

\begin{figure}[htb]
    \centering
    \includegraphics[width=0.48\linewidth]{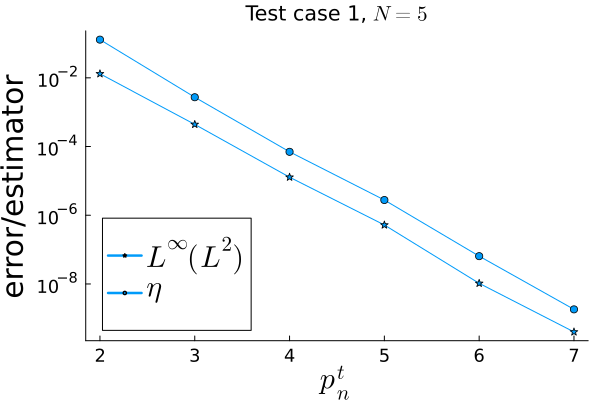}
    \includegraphics[width=0.48\linewidth]{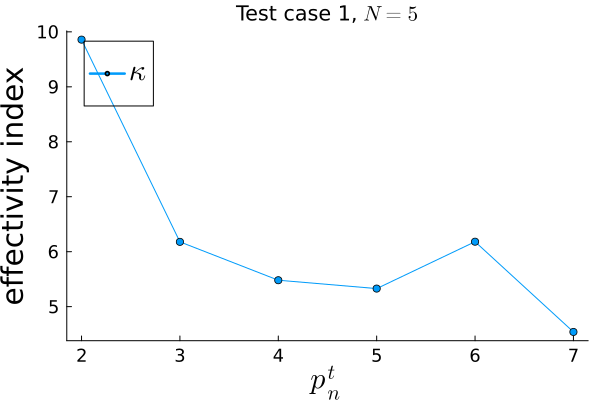}
    \includegraphics[width=0.48\linewidth]{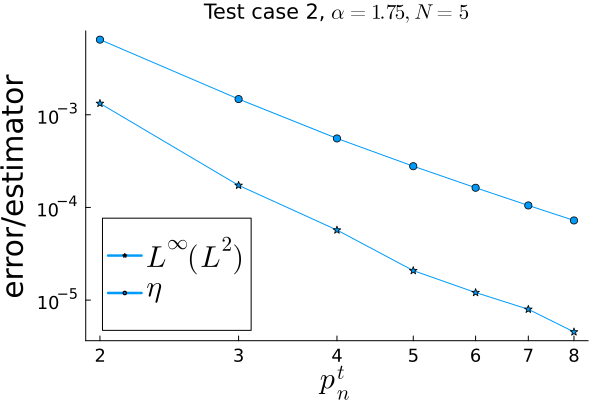}
    \includegraphics[width=0.48\linewidth]{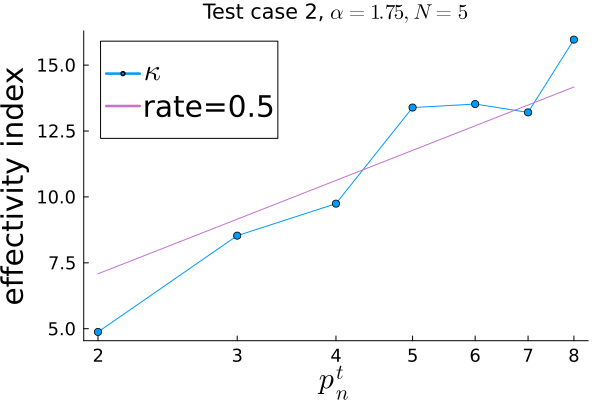}
    \includegraphics[width=0.48\linewidth]{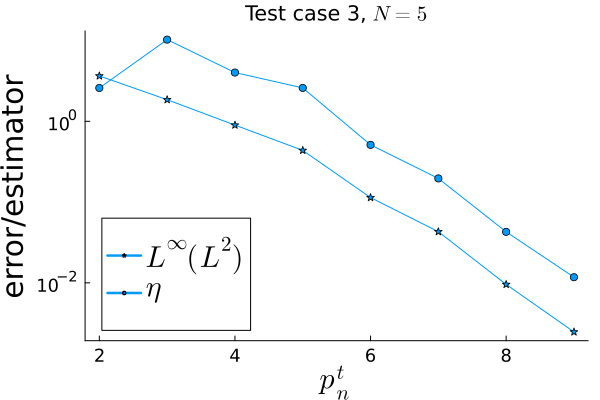}
    \includegraphics[width=0.48\linewidth]{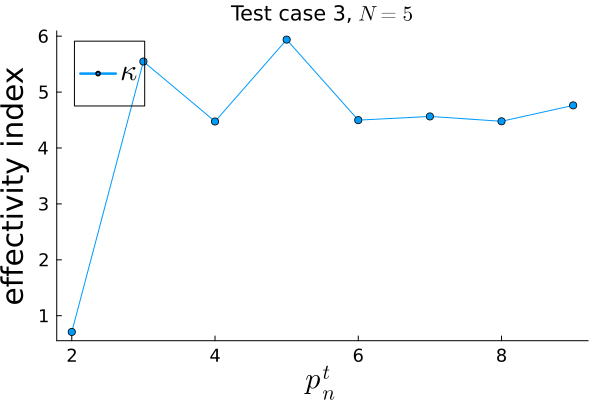}
    \caption{Exact solution as in~\eqref{test-case:1}, \eqref{test-case:2} and~\eqref{test-case:3}, $\pnt$-refinement.}
    \label{fig:estimator_pnt_refine}
\end{figure}

Also in this case, the estimator has the same convergence rate
as that of the $L^\infty(0,T;L^2(\Omega))$ norm of the error.
For the test case with exact solution in~\eqref{test-case:1} and~\eqref{test-case:3}, the effectivity index~$\kappa$ is uniformly bounded
in terms of~$\pnt$;
for the test case with exact solution in~\eqref{test-case:2},
$\kappa$ increases with rate~$1/2$ in terms of~$\pnt$.

\subsection{Adaptive refinements in time}\label{subsection:adaptive_test}
Since the data oscillation terms in the a posteriori error estimates
\eqref{reliability:estimate} are
not dominant, we omit them for simplicity.
We consider here an adaptive algorithm
with the usual structure
\[
\textbf{SOLVE}
\qquad \Longrightarrow \qquad
\textbf{ESTIMATE}
\qquad \Longrightarrow \qquad
\textbf{MARK}
\qquad \Longrightarrow \qquad
\textbf{REFINE}.
\]
The \textbf{ESTIMATE} step is driven by using the error estimator~$\eta$;
to this aim, we propose an algorithm for the localization
of~$\eta$, notably to determine a practical value of~$m$
in~\eqref{eta2-oscf}.
As for the \textbf{MARK} step, we use D\"orfler’s marking
(with $\ell^1$-type summation)
with a given threshold $\theta$ in~$(0,1]$.
The \textbf{REFINE} step is realized by the bisection of the marked time intervals.

In what follows, we denote the Kronecker delta function by~$\delta_{i,j}$.

\paragraph{Adaptive algorithm for the localization of each ESTIMATE step.}
\begin{enumerate}
    \item Find the interval index~$m$ where $\eta_1$ attains the maximum.
    \item Compute 
    \[
    \eta_{2,n} :=
    \begin{cases}
    \frac{2}{\pi}  \Big( \taun c_3(\pnt-1)
        \Norm{\Deltax(\U-\Pizpbftmo  \U)}_{L^1(\In;L^2(\Omega))} \\
\qquad\qquad  + \taun^3 c_2(\pnt) c_4(\pnt-3)
        \Norm{\jump{ \Deltax \U'}(\tnmo,\cdot)}_{0,\Omega} \Big)        
            & \text{for all } n=1,\dots,m \\
    0 & \text{otherwise,}
    \end{cases}
    \]
    and $\eta_2$ by~\eqref{eta2-oscf} with the above given $m$.
    \item Denote $\eta_n = \delta_{m,n}\eta_1 + \eta_{2,n}$ 
    the local error estimator
    on the time interval~$\In$ for $n=1,\dots,N$.
\end{enumerate}

\subsubsection{Numerical results: the adaptive algorithm}
We consider the test case with exact solution as in~\eqref{test-case:2}
and~$\alpha=1.75$; $\pnt$ in $\{2,3,4\}$; $\px=2$;
$5$ nodes in each space direction;
D\"orfler’s marking parameter $\theta=0.5$.
We define $DoFs =N \times \pnt \times \card(\Vh)$.
In Figure~\ref{fig:adaptive}, we display the
$L^\infty(L^2)$ uniform and adaptive errors
and estimators~$\eta$ in the left panels;
the effectivity indices~$\kappa$
are presented in the right panels.

\begin{figure}[htb]
    \centering
    \includegraphics[width=0.48\linewidth]{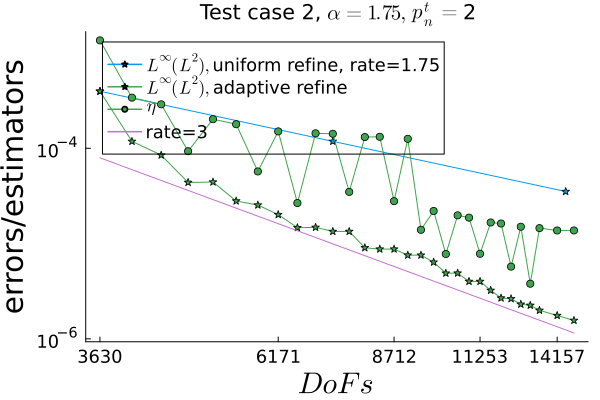}
    \includegraphics[width=0.48\linewidth]{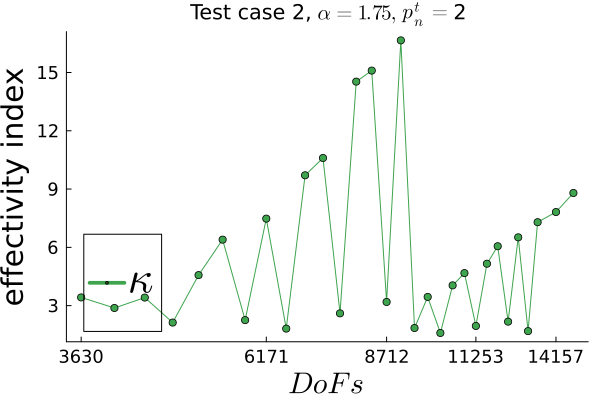}
    \includegraphics[width=0.48\linewidth]{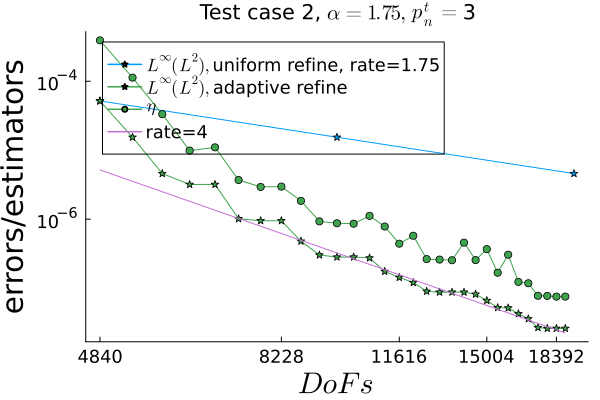}
    \includegraphics[width=0.48\linewidth]{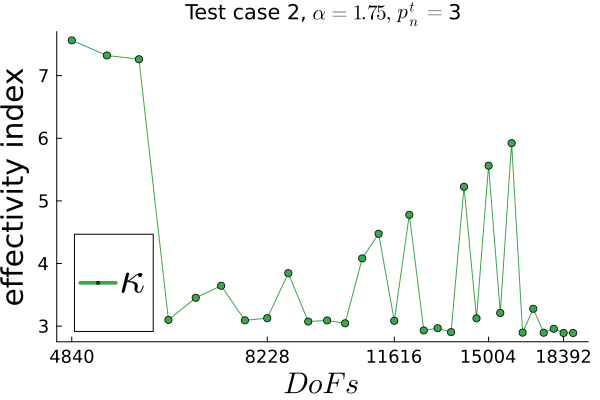}
    \includegraphics[width=0.48\linewidth]{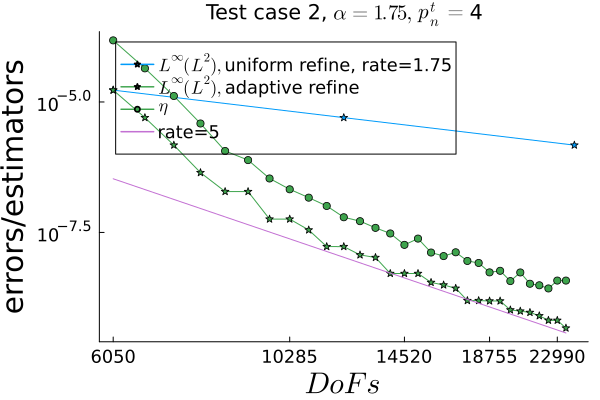}
    \includegraphics[width=0.48\linewidth]{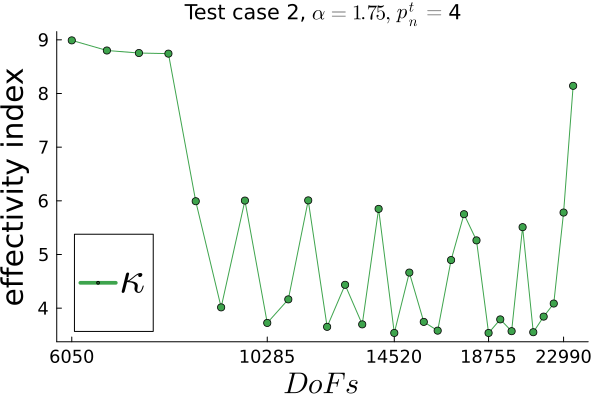}
    \caption{Exact solution as in~\eqref{test-case:2}.}
    \label{fig:adaptive}
\end{figure}

Some remarks for this test case are in order:
\begin{itemize}
    \item the adaptive algorithm delivers optimal convergence rate in terms of the number $DoFs$ of the method;
    \item the effectivity index is uniformly bounded for fixed~$\pnt$,
    and the adaptive algorithm asymptotically returns smaller effectivity indices.
\end{itemize}

In Figure \ref{fig:adaptive_mesh},
we illustrate the final time meshes
produced with~$\pnt=2$ and~$4$ from the
adaptive and uniform refinement algorithms.

\begin{figure}
    \centering
    \includegraphics[width=0.48\linewidth]{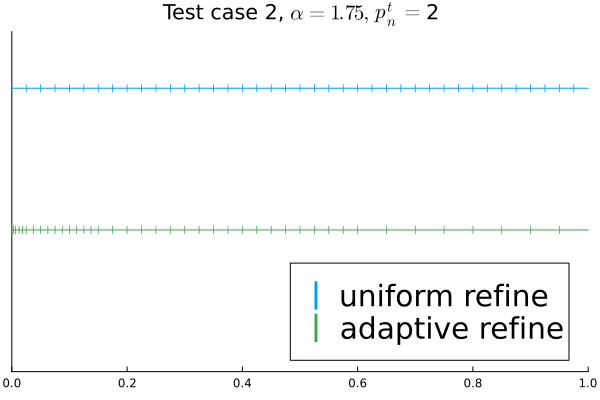}
    \includegraphics[width=0.48\linewidth]{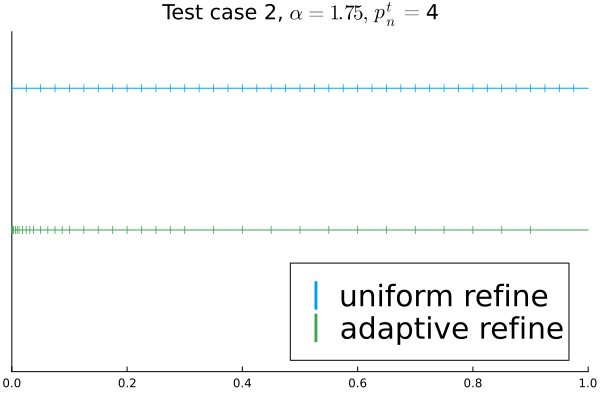}
    \caption{Exact solution as in~\eqref{test-case:2},
    time mesh visualization.}
    \label{fig:adaptive_mesh}
\end{figure}

The adaptive algorithm generates a time mesh
with a strong grading towards the initial time.
Such temporal meshes are more graded
in the case $\pnt=4$ compared to the case $\pnt=2$.

\section{Conclusions} \label{section:conclusions}
For a $\mathcal C^0$-in-time discretization of the wave equation
in second order formulation
\begin{itemize}
    \item we derived a priori estimates
    for the fully-discrete method, which are
    explicit in the spatial mesh size,
    the time steps, and the polynomial degree 
    distributions in space and time
    (the errors were measured in $L^\infty$-type
    norms in time);
    \item we derived fully explicit, reliable
    a posteriori estimates for the $L^\infty(L^2)$
    error in terms of a novel error estimator,
    which involves jumps of the time derivatives
    and the spatial Laplacian at the time nodes. 
\end{itemize}
A campaign of numerical experiments revealed that
\begin{itemize}
    \item for sufficiently smooth solutions, the order of convergence
    under uniform refinements
    is optimal also for $L^2$-type errors
    in time;
    \item the proposed error estimator
    is efficient under time step refinements
    and may be inefficient under $\p$-refinements
    in time.
\end{itemize}
More recently~\cite{Dong-Georgoulis-Mascotto-Wang:2025},
we analyzed and assessed the performance of a fully discrete
scheme for the wave equation in 2nd order formulation
with dynamic mesh change.
A crucial question remains
the proof of a localized lower bound for the error estimator
in~\eqref{error-estimator},
which, to the best of our knowledge,
has been an open problem for at least the last two decades
for the error measures considered in this work;
lower bounds have recently been established
at least for other choices of norm,
cf., e.g., \cite{ChaumontFrelet:2023, ChaumontFrelet-Ern:2025}.
Another challenge is the proof of convergence
and optimality of an adaptive algorithm
driven by that error estimator;
recent results in~\cite{Feischl:2022, Feischl-Niederkofer:2025}
show that this can be accomplished for parabolic problems
without mesh change in the energy norm,
while no similar results are currently available for the wave equation.

\paragraph{Acknowledgements.}
ZD has been partially funded by the French National Research Agency (ANR, STEERS, project number ANR-24-CE56-0127-01). LM has been partially funded by the European Union
(ERC, NEMESIS, project number 101115663);
views and opinions expressed are however those of the author
only and do not necessarily reflect those of the EU or the ERC Executive Agency.
LM has been partially funded by MUR (PRIN2022 research grant n. 202292JW3F).
LM is also member of the Gruppo Nazionale
Calcolo Scientifico-Istituto Nazionale di Alta Matematica (GNCS-INdAM).

{\footnotesize \bibliography{bibliogr}} \bibliographystyle{plain}

\end{document}